\theoremstyle{plain}
\newtheorem{thm}[subsection]{Theorem}
\newtheorem{lemma}[subsection]{Lemma}
\newtheorem{prop}[subsection]{Proposition}
\newtheorem{cor}[subsection]{Corollary}
\theoremstyle{definition}
\newtheorem{rmk}[subsection]{Remark}
\newtheorem{defn}[subsection]{Definition}
\let\a\alpha
\let\b\beta
\let\d\delta
\let\e\epsilon
\let\g\gamma
\let\i\iota
\let\l\lambda
\let\p\psi
\let\o\omega
\let\s\sigma
\let\th\theta
\let\x\xi
\def\scr{\mathscr}
\def\cal{\mathcal}
\let\G\Gamma
\let\O\Omega
\newcommand{\ra}{\longrightarrow}
\newcommand{\xa}{\xrightarrow}
\newcommand{\bGn}{ G[p^n]\otimes_{\Z} H[p^n]}
\newcommand{\im}{{\rm im}\:}
\newcommand{\DD}{{\mathbb D}}
\newcommand{\F}{{\mathbb F}}
\newcommand{\Z}{{\mathbb Z}}
\newcommand{\C}{{\mathbb C}}
\newcommand{\Q}{{\mathbb Q}}
\newcommand{\R}{{\mathbb R}}
\newcommand{\cO}{\mathcal{O}}
\newcommand{\cT}{\mathcal{T}}
\newcommand{\cV}{\mathcal{V}}
\newcommand{\sL}{{\mathscr L}}
\newcommand{\Spec}{{\mbox{Spec }}}
\newcommand{\End}{{\mathrm{End}}}
\newcommand{\cri}{{\mathrm{cris}}}
\newcommand{\Def}{{\mathrm{Def}}}
\newcommand{\Aut}{{\mathrm{Aut}}}
\newcommand{\et}{{\mathrm{et}}}
\newcommand{\mult}{{\mathrm{mult}}}
\newcommand{\colim}{{\mathrm{colim}}}
\newcommand{\Ht}{\mathrm{ht}}
\newcommand{\Hom}{{\mathrm{Hom}}}
\begin{document}

\title[On the deformation of a Barsotti-Tate group] {On the deformation of a Barsotti-Tate group over a curve}
\author{Jie Xia }\address{ Mathematics Department, Columbia University in the city of New York} \email{xiajie@math.columbia.edu}
\maketitle

\begin{abstract}
In this paper, we study deformations of pairs $(C,G)$ where $G$ is a height 2 BT( or BT$_n$) group over a complete curve on algebraically closed field $k$ of characteristic $p$. We prove that, if the curve $C$ is a versal deformation of $G$, then there exists a unique lifting of the pair to the Witt ring $W$. We apply this result in the case of Shimura curves to obtain a lifting criterion. 
\end{abstract}

\section{Introduction}
Illusie, along with Grothendieck and Raynaud, develop the deformation theory of BT groups (see \cite{Ill}). One result in \cite{Ill} is that, roughly speaking,  BT groups are unobstructed over an affine base. In our paper, rather than an affine base, we consider the deformation of the pair of a complete curve and a BT group of height 2  over this curve. We show that there is no obstruction to lift the pair $(C,G)$ from characteristic $p$ to characteristic $0$ when the curve $C$ is a versal deformation. Furthermore, \ref{lifting in general pol} indicates that this deformation result is a main ingredient in lifting a curve in characteristic $p$ to a Shimura curve. This leads to our work on how to define Shimura varieties in positive characteristic. The relevant results will appear in our upcoming paper. 

We note that Kang Zuo and his collaborators also work with the special rank 2 bundles (associated to height 2 BT groups via Dieudonne functor). For example, in \cite{Zuo3}, amid other results, they discover the periodicity  of rank 2 semistable Higgs bundle.  Shinichi Mochizuki has also considered a similar problem. In \cite{Mochizuki}, he developed the theory of the rank 2 indigenous bundle, which is closely related to the height 2 BT group. 

\subsection{Main results}In this paper, let $k$ be an algebraically closed field of characteristic $p$, $W$ Witt ring $W(k)$, $W_n$ the truncated ring $W(k)/p^{n+1}$ and $C$ a smooth proper curve of genus $\geq 2$ over $k$. Let $G$ be a Bassotti-Tate(BT) group of height 2 over $C$ . For the definition of BT or truncated BT groups, we refer to Chapter 1 of \cite{Messing}.  

In this situation,  (\cite[A.2.3.6]{Ill} ) says that the following two conditions are equivalent.
\begin{itemize}
 \item For any closed point $c\in C$, let $\hat{C}_c$ be the completion of $C$ over $c$. The restriction of $G$ over $\hat{C}_c$ is the versal deformation of $G_c$ in the category of local artinnian $k-$algebras with residue field $k$. 
 
 \item the Kodaira-Spencer map: 
\begin{equation} \label{ks equation}
\rm{ks}: T_C \rightarrow t_G \otimes t_{G^*} \end{equation} is an isomorphism.
\end{itemize}
Here $t_G$ is the degree  0 cohomology of the dual cotangent complex of $G$, i.e. $t_G=H^0(\check{l}_G)$. If the pair $(C, G)$ satisfies either of the conditions, then we say that $C$ is a versal deformation of $G$. 

\begin{thm} \label{main theorem}
If $C$ is a versal deformation of $G$, then there exists a unique curve $C'$ over $W$ which is a lifting of $C$ and admits a lifting $G'$ of $G$, 
\[\xymatrix{
G \ar[r] \ar[d] & G' \ar[d]\\
C\ar[r] \ar[d] &C' \ar[d]\\
k \ar[r]& W.
} \]  Furthermore, $G'$ is unique up to a unique isomorphism. 
\end{thm}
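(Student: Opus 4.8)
The plan is to first construct a compatible system of liftings over the truncations $S_n:=\Spec W_n$ of $\Spec W$ by induction on $n$, and then to algebraize the resulting formal object. Write $(C_0,G_0)=(C,G)$ over $S_0=\Spec k$. Each closed immersion $S_n\hookrightarrow S_{n+1}$ is a square-zero thickening whose ideal is $p^{n+1}W_{n+1}\cong k$, so at every stage the relevant deformation theory is linear and is computed by coherent cohomology on the special fibre $C$. Throughout I abbreviate $\mathcal{E}:=t_G\otimes t_{G^*}$, a line bundle on $C$, and use that the Kodaira--Spencer map \eqref{ks equation} is by hypothesis an isomorphism $\mathrm{ks}\colon T_C\xrightarrow{\ \sim\ }\mathcal{E}$ of sheaves, so that the induced maps $\mathrm{ks}_*\colon H^q(C,T_C)\xrightarrow{\ \sim\ }H^q(C,\mathcal{E})$ are isomorphisms for all $q$.

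The inductive step is the heart of the argument. Assume $(C_n,G_n)$ over $S_n$ is given. Since $C_n/S_n$ is smooth and proper of relative dimension one, $H^2(C,T_C)=0$, so $C_n$ lifts to a smooth proper curve $C_{n+1}/S_{n+1}$, and the set of such lifts is a torsor under $H^1(C,T_C)$. For a fixed lift $C_{n+1}$, Grothendieck--Messing theory together with Illusie's deformation theory of Barsotti--Tate groups (\cite{Messing}, \cite{Ill}) identifies the obstruction to extending $G_n$ to a Barsotti--Tate group over $C_{n+1}$ with a class $\mathrm{ob}(C_{n+1})\in H^1(C,\mathcal{E})$: lifting $G$ amounts to lifting the Hodge filtration inside the value of the Dieudonn\'e crystal, and the obstruction to doing so lives in $H^1$ of the relevant $\mathcal{H}om$-sheaf, which is precisely $\mathcal{E}$. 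The content of the Kodaira--Spencer map here is that $\mathrm{ob}$ is an affine function of the chosen base lift, with linear part $\mathrm{ks}_*$: translating $C_{n+1}$ by $\xi\in H^1(C,T_C)$ changes $\mathrm{ob}(C_{n+1})$ by $\mathrm{ks}_*(\xi)$. Because $\mathrm{ks}_*$ is an isomorphism on $H^1$, there is one and only one lift $C_{n+1}$ with $\mathrm{ob}(C_{n+1})=0$, and over this curve $G_n$ extends to a Barsotti--Tate group $G_{n+1}$.

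Uniqueness and rigidity at each stage come from the vanishing $H^0(C,T_C)=0$, which holds precisely because $C$ has genus $\geq 2$. Once the correct $C_{n+1}$ is fixed, the lifts $G_{n+1}$ form a torsor under $H^0(C,\mathcal{E})\cong H^0(C,T_C)=0$, so $G_{n+1}$ is unique, and the same vanishing kills the infinitesimal automorphisms of the pair. Conceptually, the entire inductive step is governed by the two-term complex $\mathcal{K}^\bullet:=[\,T_C\xrightarrow{\ \mathrm{ks}\ }\mathcal{E}\,]$ placed in degrees $0$ and $1$: the infinitesimal automorphisms, deformations, and obstructions of the pair $(C,G)$ are $\mathbb{H}^0$, $\mathbb{H}^1$, $\mathbb{H}^2$ of $\mathcal{K}^\bullet$, and since $\mathrm{ks}$ is a quasi-isomorphism the complex is acyclic, so all three groups vanish. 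Hence at every level the lift exists, is unique, and has no nontrivial automorphisms; the system $(C_n,G_n)$ is thus determined up to unique isomorphism, producing a formal lift $(\widehat{C},\widehat{G})$ over $\mathrm{Spf}\,W$, unique up to unique isomorphism.

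It remains to algebraize. The curve $C$ is projective, and an ample line bundle lifts through the tower since its obstruction lies in $H^2(C,\mathcal{O}_C)=0$; thus $\widehat{C}$ is a polarized formal curve, and Grothendieck's existence theorem yields a unique projective flat curve $C'/W$ whose formal completion is $\widehat{C}$. Writing $\widehat{G}=\varinjlim_m\widehat{G}[p^m]$, each $\widehat{G}[p^m]$ is a finite locally free group scheme over $\widehat{C}$, i.e. a coherent sheaf of Hopf algebras on the proper $W$-scheme $C'$; by formal GAGA each algebraizes uniquely and compatibly to a finite flat group scheme $G'[p^m]$ over $C'$, and the fully faithfulness of the comparison functor transports the group laws and the transition maps, assembling a Barsotti--Tate group $G'$ over $C'$ lifting $G$, unique up to unique isomorphism. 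I expect the main obstacle to be the second paragraph: rigorously pinning down the obstruction class $\mathrm{ob}(C_{n+1})$ for the varying base $C_{n+1}$ and verifying that its dependence on the base lift is affine with linear part exactly $\mathrm{ks}_*$, which is where Grothendieck--Messing theory and \cite{Ill} must be combined with care; the cohomological vanishings and the algebraization step are comparatively standard.
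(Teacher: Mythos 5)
Your overall strategy coincides with the paper's: lift inductively through the truncations $W_n$, identify the obstruction to lifting $G$ over a chosen lift of the curve with a class in $H^1(C,t_G\otimes t_{G^*})\cong H^1(C,T_C)$, use the freedom of choosing the lift of the curve to kill that class, get uniqueness of $G'$ from $H^0(C,T_C)=0$ (genus $\geq 2$), and algebraize with Grothendieck's existence theorem. The cohomological bookkeeping, the torsor statements, and the algebraization step are all essentially as in the paper (Sections 3--4 and Step IV).

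The gap is the one you yourself flag at the end: the assertion that $\mathrm{ob}(C_{n+1})$ depends affinely on the chosen lift $C_{n+1}$ with linear part exactly $\mathrm{ks}_*$ is not a formal consequence of Grothendieck--Messing or of Illusie's torsor statement --- it is the main technical content of the theorem, and you do not prove it. A priori, changing the gluing of $C_{n+1}$ by a derivation $\delta$ changes the difference class of the two induced deformations of $G$ over an overlap by \emph{some} element of $(t_G\otimes t_{G^*})(U\cap V)$; the point to be established is that this element is precisely $\mathrm{ks}(\delta)$, where $\mathrm{ks}$ is the Kodaira--Spencer map defined via the Higgs field of $\DD(G)$. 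The paper proves this as Theorem \ref{KS} (and its higher-order analogue \ref{higher KS}) by an explicit local computation: it restricts to a section $x'$ lifting a closed point, compares the two ideals $I_1=(\tilde s)$ and $I_2=(\tilde s-p)$ cut out by the two gluings, and works in the auxiliary ring $B'/(\tilde s^2,p\tilde s)$ with the three specializations $\phi_1,\phi_2,\phi_3$ satisfying $\phi_2^*=\phi_1^*+\phi_3^*$, which is what converts the ``difference of the two pullbacks'' into the tangent-vector description $[f^*G]-[G|_x\otimes_k k[\e]]$ of $\mathrm{ks}(\delta|_x)$ from \cite[4.8.1]{Ill}. Without this identification (or an equivalent verification that the deformation theory of the pair is genuinely controlled by the complex $[T_C\xrightarrow{\mathrm{ks}}t_G\otimes t_{G^*}]$, which amounts to the same computation), the claim that there is a \emph{unique} lift of the curve with vanishing obstruction is unsupported, and the induction does not close. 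A secondary, smaller point: your appeal to Grothendieck--Messing (lifting the Hodge filtration) requires care with divided powers at $p=2$; the paper avoids this by working directly with Illusie's cotangent-complex deformation theory of BT$_n$ groups and reducing the BT case to the $p$-kernel via \ref{reduce to truncated 0} and \ref{reduce to truncated}.
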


\begin{rmk}
Let $C_n$ be any flat curve over $W_n$ with special fiber $C$. It is clear that $C_n$ is proper and smooth, for any $n\geq 1$ and so is $C'$. 
\end{rmk}

\begin{rmk}
We also have a similar result for truncated BT groups but the uniqueness is not guaranteed. For details, see \ref{the truncated case} .
\end{rmk}

As an application of \ref{main theorem}, we show a liftability result on Shimura curves of Mumford type in  \ref{lifting in general pol}.
In \cite{Mum}, Mumford defines a family of Shimura curves of Hodge type over a number field which we call Shimura curves of Mumford type. If  $X \ra C$ is a family of abelian varieties over $k$ with maximal Higgs field and its associated $p-$divisible group $X[p^\infty]$ has a special decomposition, then $X \ra C$ is a good reduction of a Shimura curve of Mumford type. The proof is technical yet the strategy is fairly simple. Firstly \ref{main theorem} implies that $X \ra C$ can be lifted to a formal family of abelian varieties over $W$. Then we prove any polarization lifts as well, which implies that the formal family is algebraizable. 

\subsection{Structure of the paper}
Theorem \ref{main theorem} is proved in Sections \ref{1st order} and \ref{higher order deformation} in the following steps:
\begin{enumerate} [I]
\item Compute the obstruction class of the lifting to $W_2$.
\item First order deformation: a choice of a lifting of $C$ to $W_2$ can kill the obstruction class.
\item Higher order deformation: recursively apply steps I and II to $W_n$ for any $n$ to obtain a formal scheme over $\mathrm{Spf}(W)$.
\item Algebraization: apply Grothendieck's existence theorem to algebraize the formal scheme.
\end{enumerate}

In Section \ref{equicharacteristic} and \ref{trivial KS map}, we give two variations of \ref{main theorem} which we will use in our future work.  In Section \ref{application}, we mainly prove \ref{lifting in general pol}.

\subsection{Notation}
We follow the notation in \cite{Ill}. For any BT or truncated BT group $G$ over $C$, let $e: C \ra G[p]$ be the identity element. Let $l_G=Le^*L_{G/C}$ be the pull back of the cotangent complex of $G/C$. Let $\o_G=e^* \O^1_{G[p]/C}$ and $t_G=H^0(\check{l}_G)$. The bundle $\DD(G)_C=\DD(G[p])$ admits a Hodge filtration (see \cite[A 2.3.1]{Ill})
\[0\ra \o_G \ra \DD(G)_C \ra t_{G^*} \ra 0.\]
Note that $\DD(G)$ admits a connection $\nabla: \DD(G)_C \ra \DD(G)_C \otimes \O^1_C$, which induces the Higgs field $\theta_G: \o_G \ra t_{G^*} \otimes \O^1_C$. We say the Higgs field is maximal if it is an isomorphism. Also if $G$ has height 2, $\o_G$ and $t_{G^*}$ has rank 1, then the Higgs field gives the Kodaira-Spencer map $T_C \ra t_G \otimes t_{G^*}$. 

For a family of abelian varieties $X\ra C$, we use $\o_X, t_X$ to represent the corresponding sheaves associated to the $p-$divisible group $X[p^\infty]$. 

\textbf{Acknowledgments.} I would like to express my deep gratitude to my advisor A.J. de Jong, for suggesting the project to me and for his patience and guidance throughout its resolution. This paper would not exist without many inspiring discussions with him.  I also want to thank Professor Kang Zuo and Mao Sheng for the helpful conversations. Lastly, I thank Vivek Pal for checking the grammatical errors.

\section{An example of the versal deformation} \label{example}

We construct the example from a fake elliptic curve, using Morita's equivalence.

Let $D$ be a nonsplit quaternion algebra over $\Q$ and $\cal{O}\subset D$ be a maximal order. The fake elliptic curve is a smooth proper Shimura curve of PEL type in the moduli $\cal{A}_{2,1}$ of abelian surfaces, defined by the following: 
\begin{enumerate}
\item
$A/\C$ is an abelian surface, 
\item $D\hookrightarrow \End(A)\otimes \Q$ such that $H_1(A,\Z)$ as an $\cal{O}-$module is isomorphic to $\cal{O}$,
\item the Rosati involution associated to the polarization $\scr{L}$ induces the involution on $\cal{O}$,
\item a level $n$ structure.  
\end{enumerate}

Since every PEL type Shimura variety has an integral model(see \cite{Kisin}), we can choose a prime $\frak{p}$ such that $\frak{p}$ is prime to the discriminant  $\text{disc}(D)$ and the fake elliptic curve admits a good reduction over $\frak{p}$. We denote the reduction as $S$. Then $S$ is proper, smooth and admits a family of abelian surfaces $Y$, and hence a family $H$ of BT groups of height 4 over $S$. Furthermore, $M_2(\F_p) \cong D \otimes_\Z \F_p$ acts on $H$. 

By \cite[Theorem 0.5]{Moller}, the PEL type Shimura curves all have maximal Higgs field. 

By Morita's equivalence, the following 
\[
\begin{aligned}\{\text{BT groups of height 4 with an action of }M_2(\F_p)\}\leftrightarrow&\{\text{BT groups of height 2}\}\\
H \mapsto& \begin{pmatrix}
  1 & 0 \\
  0 & 0 \\
 \end{pmatrix}H\\
H'\times H' \gets& H'
\end{aligned}\] is an equivalence of categories. So we have a family $H'$ of BT group of height 2 over $S$. 

Furthermore, since the base change preserves the maximal Higgs field, by \cite[A 2.3.6]{Ill}, $H' \ra S$ is a versal deformation of height 2 BT groups.

\section{First order deformation} \label{1st order}

In this section, without specific indications, $G$ is a BT or BT$_n$ group.

\subsection{Step I} First recall the following theorems by Grothendieck and Illusie:

\begin{prop}(\cite[Theorem 4.4]{Ill}) \label{locally unobstructed}
Let $S_0 \ra S \stackrel{i}{\ra} S'$ be a closed immersion defined by ideal sheaf $J\subset K$ with $JK=0$ and $p^N \cal{O}_{S_0}=0$. Let $G$ be a BT$_n$ group over $S$ with $n\geq N$ or a BT group. Assume $S'$ is affine and $p$ is nilpotent on $S'$. Then
\begin{enumerate}
\item there exists a $BT_n$ group $G'$ on $S'$ as a deformation of $G$,
\item the set of deformations up to isomorphism is a torsor under the group $(t_G \otimes t_{G^*} \otimes J)(S_0)$
\item if $n-k>N$, then the morphism 
\[\Def(G,i) \ra \Def(G[p^k],i)\] is bijective. 
\end{enumerate} 
\end{prop}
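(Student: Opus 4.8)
The plan is to place the problem inside the general obstruction-and-torsor formalism attached to the cotangent complex $l_G = Le^*L_{G/S}$ and then to compute the relevant $\mathrm{Ext}$-groups explicitly. The first, purely ring-theoretic, observation is that since $J \subset K$ and $JK = 0$, the ideal $J$ is annihilated by $K$, hence is naturally an $\cO_{S_0}$-module and in particular is killed by $p^N$. This is exactly what will force the torsor group to be computed over $S_0$ rather than over $S$, and it is also the mechanism that lets the hypothesis $n \geq N$ make a BT$_n$ group behave like an honest BT group. The general deformation theory then supplies an obstruction class in $\mathrm{Ext}^2_S(l_G, J)$ whose vanishing is equivalent to the existence of a deformation $G'/S'$; when the set of deformations is nonempty, a simply transitive action of $\mathrm{Ext}^1_S(l_G, J)$ on it; and an identification of the automorphisms of a fixed deformation with $\mathrm{Ext}^0_S(l_G, J)$.

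For the existence statement (1) I would use that for a BT group the cotangent complex $l_G$ is a perfect complex concentrated in degrees $-1$ and $0$, so that $\mathcal{E}xt^q(l_G, J) = 0$ for $q \geq 2$; since $S'$ is affine all higher sheaf cohomology vanishes, and the spectral sequence $H^p(S, \mathcal{E}xt^q(l_G, J)) \Rightarrow \mathrm{Ext}^{p+q}_S(l_G, J)$ collapses to give $\mathrm{Ext}^2_S(l_G, J) = 0$. For the torsor statement (2) I would identify $\mathrm{Ext}^1_S(l_G, J)$ with $(t_G \otimes t_{G^*} \otimes J)(S_0)$ through the crystalline picture: because $J^2 \subset JK = 0$, the square-zero ideal $J$ carries its canonical divided-power structure and $p$ is nilpotent, so by Grothendieck--Messing theory deformations of $G$ correspond to liftings of the Hodge filtration $\o_G \subset \DD(G)_S$ to a local direct summand of the free module $\DD(G)_{S'}$. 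Two such liftings differ by an element of $\Hom(\o_G, t_{G^*} \otimes J)$, and using $\o_G^{\vee} = t_G$ together with the Hodge sequence $0 \to \o_G \to \DD(G)_S \to t_{G^*} \to 0$ this group is exactly $(t_G \otimes t_{G^*} \otimes J)(S_0)$, the sections being taken over $S_0$ precisely because $J$ is an $\cO_{S_0}$-module.

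For the truncated groups I would reduce to the full BT case rather than redo the crystalline computation. Since $J$ is killed by $p^N$ and $n \geq N$, the $J$-linear deformation invariants of $G[p^n]$---both the obstruction group $\mathrm{Ext}^2$ and the torsor group $\mathrm{Ext}^1$---agree with those of a genuine BT group, and $t_{G[p^n]} = t_G$; thus (1) and (2) hold uniformly. For (3), the restriction map $\Def(G, i) \to \Def(G[p^k], i)$ is equivariant for the two torsor structures, and once $n - k > N$ the common annihilation of $J$ by $p^N$ identifies both structure groups with $(t_G \otimes t_{G^*} \otimes J)(S_0)$; a map of nonempty torsors that is equivariant for an isomorphism of structure groups is automatically a bijection, which is the assertion.

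The step I expect to be the main obstacle is the truncated case. Making precise that the obstruction and torsor groups of $G[p^n]$ coincide with those of a full BT group requires a careful analysis of the cotangent complex (equivalently the Dieudonn\'e crystal) of a truncated BT group and of how the annihilation $p^N J = 0$ propagates through it; in particular, pinning down the exact numerical threshold $n - k > N$ in (3)---tracking how many powers of $p$ are needed to absorb both the nilpotence and the $\cO_{S_0}$-module structure of $J$---is the delicate bookkeeping, and there I would follow Illusie's argument closely.
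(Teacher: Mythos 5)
The paper does not prove this proposition at all: it is imported verbatim as \cite[Theorem 4.4]{Ill}, so there is no ``paper's own proof'' to compare against. Judged on its own terms, your sketch is a faithful reconstruction of the shape of Illusie's argument, and your route to parts (1) and (2) in the case of a genuine BT group is sound and arguably cleaner than the cotangent-complex computation: since $J\subset K$ and $JK=0$ force $J^2=0$, the ideal $J$ carries the trivial divided powers and $p$ is nilpotent, so Grothendieck--Messing identifies deformations with lifts of the Hodge filtration; such a lift of a direct summand of a locally free module over an affine base always exists, and two lifts differ by an element of $\Hom(\o_G, t_{G^*}\otimes J)\cong (t_G\otimes t_{G^*}\otimes J)(S_0)$, the $S_0$-linearity coming exactly from your observation that $K$ kills $J$. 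That is a complete proof of (1) and (2) for BT groups.

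The genuine gap is where you yourself locate it, but it is larger than ``delicate bookkeeping'': the assertion that for $n\geq N$ the obstruction and torsor groups of the BT$_n$ group $G[p^n]$ ``agree with those of a genuine BT group'' is not a formal consequence of $p^NJ=0$; it is the main content of Illusie's theorem. Grothendieck--Messing says nothing about truncated groups, so for (1), (2) in the BT$_n$ case and for (3) one cannot avoid the analysis of the co-Lie complex $\ell_{G[p^n]}$ (perfect amplitude $[-1,0]$, the exact triangles relating $\ell_{G[p^k]}$ and $\ell_{G[p^n]}$ under the inclusions and quotients, and the smoothness of the algebraic stack of BT$_n$ groups), which is precisely what produces the thresholds $n\geq N$ and $n-k>N$. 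Your closing reduction for (3) --- an equivariant map of nonempty torsors over isomorphic structure groups is a bijection --- is correct as stated, but the input it needs (that restriction $\Def(G,i)\to\Def(G[p^k],i)$ induces an isomorphism of the structure groups under $n-k>N$) is again the theorem being cited, not something established earlier in your argument. So the proposal proves the BT case of (1)--(2) by a legitimate alternative method and otherwise correctly describes, but does not supply, Illusie's proof.
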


\begin{prop}  (\cite[Corollary 4.7]{Ill}) \label{reduce to truncated 0}
Assumption as \ref{locally unobstructed}. Let $H$ be a BT group on $S$.
\begin{enumerate} 
\item If $n\geq N$, then the map 
\[\Def(H, i) \ra \Def(H[p^n],i)\] 
is bijective.
\item The automorphism group of the deformation $H'$ of $H$ to $S'$ is trivial.
\end{enumerate}
\end{prop}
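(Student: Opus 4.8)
The statement has two independent parts, and I would attack them separately, both resting on \ref{locally unobstructed}.

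\emph{Part (1).} The plan is to exhibit both sides as torsors under one and the same group. Applying \ref{locally unobstructed}(1) to the BT group $H$ and to the BT$_n$ group $H[p^n]$ (legitimate since $n\geq N$, $S'$ affine and $p$ nilpotent), both $\Def(H,i)$ and $\Def(H[p^n],i)$ are nonempty, and by \ref{locally unobstructed}(2) they are torsors under $(t_H\otimes t_{H^*}\otimes J)(S_0)$ and $(t_{H[p^n]}\otimes t_{(H[p^n])^*}\otimes J)(S_0)$ respectively. The key point is that these invariants depend only on the $p$-torsion: since $\o_H=e^*\O^1_{H[p]/C}$ and $t_H=H^0(\check l_H)$ are computed from $H[p]$, the canonical maps $t_H\to t_{H[p^n]}$ and $t_{H^*}\to t_{(H[p^n])^*}$ are isomorphisms, so the two structure groups coincide. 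The truncation map $H'\mapsto H'[p^n]$ is then a morphism of torsors lying over this identification — the one nonformal input here is the naturality of the obstruction-torsor formalism underlying \ref{locally unobstructed}(2). A morphism of nonempty torsors over an isomorphism of structure groups is automatically a bijection, which gives (1). (Alternatively one can iterate \ref{locally unobstructed}(3): for $m-n>N$ the maps $\Def(H[p^m],i)\to\Def(H[p^n],i)$ are bijective, so the tower $\{\Def(H[p^m],i)\}_m$ is essentially constant and its inverse limit $\Def(H,i)$ maps bijectively to each term.)

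\emph{Part (2).} An automorphism of the deformation $H'$ is an automorphism $\f$ of the BT group $H'/S'$ with $\f|_H=\mathrm{id}_H$. As BT groups are commutative, $\p:=\f-\mathrm{id}_{H'}$ is an endomorphism of $H'/S'$ whose restriction to $H$ over $S$, hence over $S_0$, vanishes; I must show $\p=0$. This is the genuinely new content and the main obstacle. It is precisely the step where one has to use that $H$ is an \emph{honest} BT group: for a truncated group $H[p^n]$ the automorphisms of a deformation reducing to the identity need not be trivial, so the formal argument of Part (1) cannot apply here. The cleanest route is the rigidity of homomorphisms of $p$-divisible groups over the nilpotent thickening $S_0\hookrightarrow S'$ (on which $p$ is nilpotent): the restriction $\Hom_{S'}(H',H')\to\Hom_{S_0}(H_0,H_0)$ is injective, so $\p=0$ and $\f=\mathrm{id}$. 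Intrinsically, the mechanism is that the full $p$-divisibility of $H'$ forces such an infinitesimal endomorphism to be divisible by every power of $p$ while taking values in a module annihilated by a fixed power of $p$ (as $p^N\cal O_{S_0}=0$), whence it vanishes. This $p$-divisibility/rigidity input, absent in the BT$_n$ setting, is the crux of the proposition, Part (1) being purely formal once the torsor structure of \ref{locally unobstructed}(2) is in hand.
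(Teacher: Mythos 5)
The paper does not prove this proposition at all: it is quoted verbatim from Illusie (\cite[Corollary 4.7]{Ill}) and used as a black box, so there is no in-paper argument to compare yours against. Judged on its own, your reconstruction is essentially sound and close in spirit to Illusie's actual deduction, with two remarks. For part (1), your primary route (two nonempty torsors under the identified group $(t_H\otimes t_{H^*}\otimes J)(S_0)$, with truncation an equivariant map) is correct as a use of \ref{locally unobstructed}(2) \emph{as stated in this paper}, but note that in Illusie the torsor structure on $\Def(H,i)$ for an honest BT group is itself a consequence of the corollary you are proving, so that route is circular at the source; your parenthetical alternative --- the tower $\Def(H[p^m],i)\to\Def(H[p^n],i)$ is eventually constant by \ref{locally unobstructed}(3) and $\Def(H,i)$ is its inverse limit --- is the safe argument and is the one Illusie uses, though you should say a word about why a deformation of $H$ is the same as a compatible system of deformations of the $H[p^k]$. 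For part (2), your appeal to rigidity of homomorphisms of $p$-divisible groups over a nilpotent thickening on which $p$ is nilpotent (injectivity of $\Hom_{S'}(H',H')\to\Hom_{S_0}(H_0,H_0)$, using that $\Hom$ of BT groups is $p$-torsion free while the ideal is killed by a power of $p$) is a correct and standard mechanism (Drinfeld--Katz--Messing); Illusie instead gets it from his Theorem 4.4(d) (an automorphism of a deformation of a BT$_{n}$ group restricts trivially to the $p^{n-1}$-truncation) passed to the limit over the truncations, which is the variant this paper later invokes in the truncated case. Either way the statement holds; your version buys independence from the finer truncated statement at the cost of importing the rigidity lemma.
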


\begin{cor} \label{reduce to truncated}
Let $S_0 \ra S \stackrel{i}{\ra} S'$ be a closed immersion defined by ideal sheaf $J\subset K$ with $JK=0$ and $p^N \cal{O}_{S_0}=0$. Let $G$ be a BT$_n$ group over $S$ with $n\geq N$ or a BT group. Assume $p$ is nilpotent on $S'$ and $S_0$ is a proper smooth curve over an algebraically closed field $k$ of characteristic $p$. If $G[p] $ can be lifted to $S'$, then $G $ can also be lifted to $S'$. Hence in this case, the obstruction to lift $ G$ is the same as that of $G[p]$. 
\end{cor}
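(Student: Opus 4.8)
The plan is to globalize Illusie's affine-local deformation theory; the only genuine difficulty is that \ref{locally unobstructed} and \ref{reduce to truncated 0} are stated over an affine base while $S_0$ here is a proper, hence non-affine, curve. Over an affine open of $S'$, the group $G$ lifts by \ref{locally unobstructed}(1), and by \ref{locally unobstructed}(2) its liftings up to isomorphism form a torsor under $\cF:=t_G\otimes t_{G^*}\otimes J$. Hence the sheaf on $S_0$ of local liftings of $G$ is a torsor under $\cF$, and its class
\[ o(G)\in H^1(S_0,\cF) \]
is the obstruction to a global lifting of $G$, vanishing exactly when $G$ lifts to $S'$. (That the obstruction sits in $H^1$ with no residual $H^2$ term uses $\dim S_0=1$; in the BT case \ref{reduce to truncated 0}(2) moreover kills the automorphisms, so this is an honest sheaf of torsors.) The same discussion applied to $G[p]$ produces $o(G[p])\in H^1(S_0,\cF)$ in the \emph{same} group, since $\cF$ depends only on $G[p]$: indeed $t_G=H^0(\check{l}_{G[p]})$, and likewise for the dual.

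The comparison is then immediate from functoriality. Truncation $\tilde G\mapsto\tilde G[p]$ sends a lifting of $G$ to a lifting of $G[p]$ and is compatible with the $\cF$-actions for the identity identification of coefficients, so it defines an $\cF$-equivariant morphism of torsors $\underline{\Def}(G,i)\to\underline{\Def}(G[p],i)$. Any equivariant morphism of torsors under a fixed abelian sheaf is an isomorphism and preserves the classifying class, whence $o(G)=o(G[p])$. (The bijectivity statements \ref{locally unobstructed}(3) and \ref{reduce to truncated 0}(1) give the same conclusion more concretely: passing through $G[p^m]$ for $m$ large, with $m\geq N$ and $m-1>N$, they already identify $\Def(G,i)$ with $\Def(G[p],i)$ over each affine.) In particular, if $G[p]$ lifts to $S'$ then $o(G[p])=0$, so $o(G)=0$ and $G$ lifts; and the obstruction to lifting $G$ is literally the obstruction to lifting $G[p]$, as claimed.

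The step I expect to be the main obstacle is the naturality underlying the previous paragraph: one must verify that the torsor structure of \ref{locally unobstructed}(2) is natural in $G$ under truncation, i.e.\ that the difference of two liftings of $G$ maps to the difference of their truncations under the canonical identification of $t_G\otimes t_{G^*}$ with $t_{G[p]}\otimes t_{G[p]^*}$. This equivariance---for the identity on $\cF$, not merely up to an automorphism---is exactly what upgrades ``$G$ and $G[p]$ are simultaneously liftable'' to ``$o(G)=o(G[p])$''. The rest is bookkeeping: checking the gap condition $m-1>N$ while descending through the intermediate truncations in the concrete approach, and reducing the truncated case to BT groups where needed so that the deformation problem remains a torsor rather than a gerbe.
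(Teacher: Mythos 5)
Your proposal is correct in substance but organized quite differently from the paper's proof, which never forms obstruction classes at this stage. The paper fixes the given global lifting of $G[p]$, uses the local bijection $\Def(G_U,i)\xrightarrow{\sim}\Def(G[p]_U,i)$ of \ref{reduce to truncated 0}(1) (resp.\ \ref{locally unobstructed}(3) in the truncated case) to produce over each affine open the unique lifting of $G_U$ inducing the chosen lifting of $G[p]_U$, notes that these local liftings agree on overlaps because they induce the same lifting of $G[p]$ there, and glues, the cocycle condition being harmless since $S_0$ is a curve (and automatic for BT groups by uniqueness of the isomorphisms, \ref{reduce to truncated 0}(2)). That transport-and-glue argument needs only the bijection, not its $\cF$-equivariance, and it sidesteps the need to define an $H^1$-valued class for $G[p]$ --- the one place where your version is genuinely more delicate: deformations of the truncated group $G[p]$ have nontrivial automorphisms, so the presheaf of isomorphism classes of its liftings is not obviously a sheaf of $\cF$-torsors, and one must pass through the gerbe picture and use $H^2(S_0,-)=0$ in dimension one exactly as you gesture at. What your route buys is a literal proof of the corollary's second sentence, $o(G)=o(G[p])$ in the same group $H^1(S_0,\cF)$, which the paper asserts but does not separately argue; the price is the equivariance of truncation for the identity identification of $t_G\otimes t_{G^*}$ with $t_{G[p]}\otimes t_{G[p]^*}$, which you correctly isolate as the key naturality input and which does hold in Illusie's theory, since the torsor structure on $\Def(G,i)$ is itself defined through the truncations. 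One caveat you share with the paper: \ref{locally unobstructed}(3) reaches $\Def(G[p],i)$ only when $n-1>N$, so for a BT$_n$ group with small $n$ the reduction to the $p$-kernel needs a separate word (in the intended application $N=1$, so only $n=2$ is affected).
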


\begin{proof}
By \ref{reduce to truncated 0} (1), we know that over any affine open $U\subset S$, $G_U$ can be uniquely lifted to $U' \subset S'$ which is compatible with the lifting of $G[1]_U$. Over $U \cap V$, $G_{U'}$ and $G_{V'}$ both induce the same lifting of $G[p]_{U\cap V}$. Hence due to the bijection in \ref{reduce to truncated 0}(1), $G_{U'} \cong G_{V'}$.  Since $S_0$ is a curve, there is no need to check the coboundry condition. Hence $\{G_{U'}\}$ glue to a global BT group over $S$. 

By \ref{locally unobstructed} (3), the same argument works for the truncated case. 
\end{proof}

\begin{rmk}
From \ref{reduce to truncated}, we only know $G$ is liftable but it may not be globally compatible with the lifting of $G[p]$.
\end{rmk}

\begin{cor} \label{torsor of BT}
Assumptions as \ref{reduce to truncated} and further assume $G$ is a BT group. The deformation space $\Def(G,i)$ is a torsor under $H^0(S_0, t_G\otimes t_{G^*}\otimes J)$. 
\end{cor}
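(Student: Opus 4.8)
The plan is to upgrade the affine-level torsor structure of \ref{locally unobstructed}(2) to a global statement by exhibiting the deformation functor as a sheaf of torsors under $\cT := t_G \otimes t_{G^*} \otimes J$ on the curve $S_0$, and then invoking the standard fact that the global sections of a sheaf of $\cT$-torsors form a torsor under $H^0(S_0, \cT)$. Concretely, I would fix an affine open cover $\{U_\a\}$ of $S$, write $U_{\a,0} = U_\a \cap S_0$ and $U'_\a$ for the induced thickening, and consider the presheaf $\cD : U \mapsto \Def(G_U, i_U)$. By \ref{locally unobstructed}(1)--(2), each $\cD(U_\a)$ is nonempty and is a torsor under $\cT(U_{\a,0})$, and this torsor structure is plainly compatible with restriction to smaller affines.

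The heart of the argument is to promote $\cD$ to a sheaf, and hence to a genuine $\cT$-torsor. Here the essential input is the triviality of automorphisms of deformations, \ref{reduce to truncated 0}(2): given local deformations $G'_\a$ over $U'_\a$ whose restrictions to the overlaps $U'_\a \cap U'_\b$ are isomorphic as deformations of $G|_{U_\a \cap U_\b}$, the comparison isomorphisms are \emph{unique}, and on triple overlaps the resulting cocycle condition holds automatically because the only available automorphism is the identity. Since $S_0$ is a curve, I would then glue exactly as in the proof of \ref{reduce to truncated}, where the coboundary condition needs no checking. This shows that local deformations agreeing on overlaps patch uniquely to a global deformation, i.e.\ $\cD$ is a sheaf; being locally isomorphic to $\cT$ as a $\cT$-pseudo-torsor, it is a torsor under $\cT$.

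It then remains to read off the two halves of the torsor claim, and I would argue them directly. For transitivity, given two global deformations $G'_1, G'_2$, the local differences in $\cT(U_{\a,0})$ furnished by \ref{locally unobstructed}(2) agree on overlaps by functoriality of the torsor structure under restriction, hence glue to a single class in $H^0(S_0,\cT)$ carrying $G'_2$ to $G'_1$; conversely a global section $s$ acts by twisting each $G'|_{U_\a}$ by $s|_{U_{\a,0}}$ and gluing via the trivial-automorphism argument of the previous paragraph, and local freeness (again \ref{locally unobstructed}(2)) forces $s = 0$ whenever the twist is trivial. I expect the main obstacle to be precisely this descent step: verifying that effective gluing of genuine BT groups, not merely their truncations, is governed entirely by the vanishing of automorphisms, so that no higher coherent cohomology of the curve intervenes. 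The combination \ref{reduce to truncated} with \ref{reduce to truncated 0} is designed exactly to supply this. Finally, to know that $\cD(S_0)=\Def(G,i)$ is genuinely nonempty rather than merely a pseudo-torsor, note that the obstruction to a global section of a $\cT$-torsor lies in $H^1(S_0,\cT)$, and its vanishing reduces to the liftability of $G[p]$ via \ref{reduce to truncated}.
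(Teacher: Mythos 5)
Your proposal is correct and follows essentially the same route as the paper: both arguments rest on the local torsor structure from \ref{locally unobstructed}(2) together with the triviality of automorphisms of deformations of a BT group (\ref{reduce to truncated 0}(2)), which makes the comparison isomorphisms on overlaps unique, so that local deformations glue and local isomorphisms assemble into global ones. The only cosmetic difference is that the paper packages this as a linear isomorphism from the abstract torsor space $V$ of \cite[3.2(b)]{Ill} onto $H^0(S_0, t_G\otimes t_{G^*}\otimes J)$, whereas you phrase it directly as a sheaf-of-torsors descent argument; the mathematical content is identical.
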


\begin{proof}
Let $\{U_i\}$ be an affine open cover of $S$. Let $\{U'_i\}$ be the lifting of $\{U_i\}$ to $S'$ and $\{U_{i0}\}$ be the corresponding affine open cover on $S_0$. 

From (\cite[3.2(b)]{Ill}), the deformation space $\Def_{S'}(G)$ (up to isomorphism) is a torsor under some $k-$vector space $V$. By \ref{locally unobstructed},    for any $v\in V$ and any $i$, there exists $s_i \in (t_G \otimes t_{G^*} \otimes J)(U_{i0})$ such that $v_{|U'_{i}}=s_i$. Over each $U_{ij}$, 
\[{s_i}_{|U_{ij0}}=v_{|U'_{ij}}={s_j}_{|U_{ij0}}.\] So $\{s_i\}$ patch to a global section $s$ of $t_G \otimes t_{G^*} \otimes J$. Hence we have the linear transformation 
\begin{equation}\begin{aligned} \label{surj between torsors} 
V &\ra H^0(S_0, t_G \otimes t_{G^*}\otimes J)\\
v & \mapsto  s
\end{aligned}
\end{equation} It is easy to show this transformation is surjective. 

The kernel of morphism (\ref{surj between torsors}) consists of  the elements which induce locally isomorphisms between any two deformations of $G$. Since $G$ is a BT group,  by \ref{reduce to truncated 0} (2), over each $U_i$, the isomorphism between two deformations $G_1$ and $G_2$ which induces the identity on $G_{|U_i}$ is unique. Hence the local isomorphisms over each $U_i$ can glue to a global isomorphism. Therefore the kernel is trivial. The space $\Def(G, i)$ is a torsor under $H^0(S_0, t_G \otimes t_{G^*}\otimes J)$
\end{proof}

Now we can show 
\begin{thm} \label{obstruction and torsor}
Assumptions as \ref{main theorem}, for any deformation $C_2$ of $C$ over $W_2$, \begin{enumerate}
\item the obstruction class $\rm{ob}_{C_2}(G)$ of the deformation of $G$ to $C_2$ is in $H^1(C, T_C)$, 
\item if the obstruction class vanishes, then the deformation space is a torsor under $H^0(C, T_C)$.
\end{enumerate}
\end{thm}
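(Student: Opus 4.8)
The plan is to realize $\mathrm{ob}_{C_2}(G)$ as the \v{C}ech obstruction to gluing local deformations of $G$, and to identify the relevant coefficient sheaf with $T_C$ through the Kodaira--Spencer isomorphism. First I would fix the closed immersion $i\colon C\hookrightarrow C_2$ of the special fibre; as this is the first-order (square-zero) step and $C_2$ is flat over $W_2$, the defining ideal $J$ satisfies $J^2=0$ and, by flatness, $J\cong\mathcal{O}_C$ via multiplication by $p$. Hence $t_G\otimes t_{G^*}\otimes J\cong t_G\otimes t_{G^*}$, and the versality hypothesis of \ref{main theorem} is exactly the statement that the Kodaira--Spencer map \eqref{ks equation} identifies $T_C\cong t_G\otimes t_{G^*}$. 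So the sheaf governing deformations of $G$ along $i$ is canonically $T_C$, and both assertions become computations in $H^\bullet(C,\,t_G\otimes t_{G^*}\otimes J)$.

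For (1), I would take a finite affine open cover $\{U_i\}$ of $C$ with lifts $U_{i,2}\subset C_2$. By \ref{locally unobstructed}(1) each $G|_{U_i}$ deforms to some $G_i$ over $U_{i,2}$ (the same conclusion holds for $G[p]$, whose obstruction coincides with that of $G$ by \ref{reduce to truncated}). On each overlap the two deformations $G_i|_{U_{ij}}$ and $G_j|_{U_{ij}}$ both restrict $G|_{U_{ij}}$, so by \ref{locally unobstructed}(2) they differ by a well-defined $\alpha_{ij}\in(t_G\otimes t_{G^*}\otimes J)(U_{ij})$; the torsor structure gives the cocycle relation $\alpha_{ij}+\alpha_{jk}=\alpha_{ik}$ on triple overlaps. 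Changing the local lifts alters $\{\alpha_{ij}\}$ by a coboundary and refining the cover leaves the class unchanged, so I obtain a well-defined
\[
\mathrm{ob}_{C_2}(G):=[\{\alpha_{ij}\}]\in H^1(C,\,t_G\otimes t_{G^*}\otimes J)\cong H^1(C,T_C).
\]
By construction this class vanishes exactly when the $G_i$ can be adjusted inside their local torsors so as to agree on overlaps; since a BT group has no nontrivial automorphisms over $C_2$ (\ref{reduce to truncated 0}(2)), the matched local data then glue to a unique global deformation. Thus $\mathrm{ob}_{C_2}(G)=0$ if and only if $G$ lifts to $C_2$.

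For (2), I assume the obstruction vanishes, so that $\Def(G,i)\neq\varnothing$. This is precisely the setting of \ref{torsor of BT}, which exhibits $\Def(G,i)$ as a torsor under $H^0(C,\,t_G\otimes t_{G^*}\otimes J)$; feeding in the identification of the first paragraph converts this into a torsor under $H^0(C,T_C)$, as required. (Since $C$ has genus $\geq 2$ one in fact has $H^0(C,T_C)=0$, so the lift is unique once $C_2$ is fixed, which is the source of the uniqueness in \ref{main theorem}.)

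The main obstacle is conceptual bookkeeping rather than a hard estimate: I must ensure the local-to-global passage really confines the obstruction to $H^1$ and produces nothing in $H^2$. Three inputs have to be combined with care --- local existence of deformations over affines (\ref{locally unobstructed}(1)), so the $G_i$ exist unobstructedly; triviality of automorphisms of BT groups (\ref{reduce to truncated 0}(2)), which kills the $H^2$-contribution that would otherwise arise from incompatible gluing isomorphisms and makes the glued lift unique; and the fact that $C$ is a curve, so cohomology vanishes above degree $1$. The remaining delicate step is the canonical identification $t_G\otimes t_{G^*}\otimes J\cong T_C$: the factor $J\cong\mathcal{O}_C$ is flatness of $C_2/W_2$, while $t_G\otimes t_{G^*}\cong T_C$ is the versality hypothesis via \eqref{ks equation}, so it is exactly here that the assumption of \ref{main theorem} enters the proof.
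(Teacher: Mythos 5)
Your proposal is correct and follows essentially the same route as the paper: local liftability over affines from \ref{locally unobstructed}, the torsor structure on overlaps producing a \v{C}ech $1$-cocycle $\{\alpha_{ij}\}$ whose class in $H^1(C, t_G\otimes t_{G^*}\otimes J)$ is the obstruction, the identification with $H^1(C,T_C)$ via the Kodaira--Spencer isomorphism coming from versality, and part (2) by direct appeal to \ref{torsor of BT}. Your treatment of the identification $J\cong\mathcal{O}_C$ and of the cocycle/coboundary bookkeeping is in fact slightly more explicit than the paper's, but it is the same argument.
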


\begin{proof}
(1) By \ref{reduce to truncated 0} (2) and \ref{reduce to truncated}, we only need to consider the truncated BT group $G$. Choose any open affine cover $\{U_i\}$ of $C$ and the deformation $U'_i$ of $U_i$ to $W_2$ is unique up to isomorphisms. By \ref{locally unobstructed}, $G$ is locally liftable and the set of deformations of $G_{U_i}$ (up to isomorphisms) is a torsor under the group $(t_G \otimes t_{G^*} \otimes J)(U_i)$. Since the thickening just has the first order, $J\cong k$ and the set of lifting is a torsor under $(t_G\otimes t_{G*})(U_i)$. On $U_{{ij}}=U_i \cap U_j$, the restriction from $G_{U'_i}$ and $G_{U'_j}$ give two deformations of $G_{U_{{ij}}}$. 

By \ref{locally unobstructed}, we can choose $s_{ij}$ the element in $(t_G\otimes t_{G^*})(U_{ij})$ sending $[G_{U'_{ij}}]$ to $[G'_{U'_{ij}}]$, i.e. \[s_{{ij}}=[G_{U'_{ij}}]-[G_{U'_{ji}}] \in (t_G\otimes t_{G^*})(U_{ij}).\] 
The vanishing of the Cech cocycle $\{s_{ij}\}$ precisely gives a deformation of $G$. Note by \ref{reduce to truncated}, we can adjust the deformation on $U_i$ by elements in $(t_G\otimes t_{G^*}) (U_i)$.  Hence the obstruction is in $H^1(C, t_G \otimes t_{G^*})$. 

Since $C$ is a versal deformation, the Kodaira-Spencer map gives 
\[ t_G\otimes t_{G^*} \cong  T_C.\]

(2) It directly follows from \ref{torsor of BT}. 
\end{proof}

\begin{rmk}
From the proof, we know \ref{obstruction and torsor} (1) also holds for truncated BT groups.
\end{rmk}

\begin{rmk} \label{the uniqueness remark}
Notation as \ref{main theorem}. If we assume $g(C)\geq 2$, then $H^0(C, T_C)=0$ and hence for the BT group $G$, there is a unique (up to a unique isomorphism, \ref{reduce to truncated 0}(2)) deformation $G'$ to $C'$, if exists. 

And according to \cite[Corollary 5.4]{Jun}, if a curve along with a family of abelian varieties is liftable, preserving the maximal Higgs field, then the genus of the curve is necessarily greater than 2. 
\end{rmk}

\subsection{Step II}
In this part, we prove \ref{KS}. 

Firstly, we  recall that the deformation space of $C$ to $W_2$ is $H^1(C, T_C)$. Without loss of generality, assume $C$ can be covered by two affine open $U$ and $V$,with the unique  deformations $U'$ and $V'$ to $W_2$. 

Fix the embedding $(U\cap V )' \longrightarrow V'$ and alternating the embedding $(U\cap V )' \ra U'$ gives a different lifting of $C$. And all the deformations of $C$ can be obtained in this way.

\begin{equation} \label{the geometric diagram}
\xymatrix{
U' & (U\cap V)' \ar@<2pt>[l] \ar@<-2pt>[l] \ar[r] & V'\\   
U \ar[u] & U\cap V \ar[u] \ar[l] \ar[r] & V \ar[u] 
}\end{equation}

Since all the schemes involved in \ref{the geometric diagram} are affine, we can interpret the diagram in the level of commutative algebras: 

\[\xymatrix{
B' \ar@<2pt>[r]^{\p_1} \ar@<-2pt>[r]_{\p_2} \ar[d]_{/p}& A' \ar[d]_{/p} & C' \ar[l] \ar[d]_{/p} \\ 
B \ar[r] & A & C \ar[l]
}\]
where $\p_1-\p_2=p\d$ where $\d \in \rm{Der}(A)$.  Here $/p$ means quotient of $p$.

Let $G_{U'}$ be any deformation of $G_U$. Then the map $\p_1$ and $\p_2$ induce two deformations $G_{U'}\otimes_{\p_1} A'$ and $G_{U'}\otimes_{\p_2}A'$ of $G_{U\cap V}$. By \ref{locally unobstructed} (2), we can denote the element in $t_G \otimes t_{G^*} (U \cap V)$ taking $G_{U'}\otimes_{\psi_2} A' $ to $G_{U'} \otimes_{\psi_1} A' $ as $[G_{U'}\otimes_{\p_1}A']-[G_{U'}\otimes_{\p_2} A'].$

\begin{thm} \label{KS} Restricting the $\rm{ks}$ to $U\cap V$(for the definition of ks, see the morphism (\ref{ks equation})), we have
\[\rm{ks}(\d)=[G_{U'}\otimes_{\p_1}A']-[G_{U'}\otimes_{\p_2} A']. \] 
\end{thm}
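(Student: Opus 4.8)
The plan is to translate both sides of the asserted equality into crystalline (Grothendieck--Messing) data, and then to observe that the connection $\nabla$ on $\DD(G)$ is, by its very construction, the infinitesimal comparison isomorphism measuring the difference between the two pullbacks $\psi_1^*$ and $\psi_2^*$. First I would record the crystalline deformation dictionary over the thickening $A\hookrightarrow A'$ (that is, $U\cap V\hookrightarrow (U\cap V)'$): its square-zero kernel is $pA'\cong J$, equipped with the canonical divided-power structure on $(p)$ in $W_2$, so by crystalline Dieudonn\'e theory the deformations of $G_{U\cap V}$ to $A'$ are in bijection with the lifts $\tilde{\o}\subset \DD(G)_{A'}$ of the Hodge filtration $\o_G\subset \DD(G)_A$ (from the exact sequence $0\ra \o_G\ra \DD(G)_C\ra t_{G^*}\ra 0$) to a local direct summand. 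Under this bijection the torsor action of \ref{locally unobstructed}(2) becomes the usual action of $\Hom(\o_G,t_{G^*})\otimes J\cong (t_G\otimes t_{G^*})(U\cap V)$ on the set of such lifts, so both sides of the identity will be read off as homomorphisms $\o_G\to t_{G^*}$.

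Next I would evaluate the left-hand side. The deformation $G_{U'}$ over $B'$ carries a Hodge filtration $\o_{U'}\subset \DD(G_{U'})_{B'}$ lifting $\o_G$, and the pullback $G_{U'}\otimes_{\psi_i}A'$ corresponds to the filtration $\psi_i^*\o_{U'}$. Since $\DD(G)$ is a crystal and $\psi_1,\psi_2$ agree modulo $p$, the crystal's comparison isomorphism canonically identifies both $\DD(G_{U'})_{B'}\otimes_{\psi_1}A'$ and $\DD(G_{U'})_{B'}\otimes_{\psi_2}A'$ with the single module $\DD(G_{U\cap V})_{A'}$. This comparison is the Taylor expansion of $\nabla$, and because $\psi_1-\psi_2=p\delta$ with $p^2=0$ only the first-order term survives: on a lift $x$ of a local section one has $\psi_1^*x-\psi_2^*x=p\,\nabla_{\delta}(x)$. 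Hence inside the common $\DD(G_{U\cap V})_{A'}$ the two filtrations differ by $p\,\nabla_\delta$ applied to $\o_G$, and, absorbing the factor $p$ into the identification $J=pA'\cong k$, the torsor difference $[G_{U'}\otimes_{\psi_1}A']-[G_{U'}\otimes_{\psi_2}A']$ is the class of the homomorphism $\o_G\to \DD(G)_C\to t_{G^*}$ given by $\nabla_\delta$ followed by the projection.

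To finish, I would compare this with the right-hand side. By the definition of the Higgs field recalled in the Notation section, $\theta_G$ is $\nabla$ restricted to $\o_G\subset \DD(G)_C$ and projected onto $t_{G^*}\otimes \O^1_C$; contracting with $\delta$ gives exactly $\theta_G(\delta)=\mathrm{proj}\circ\nabla_\delta|_{\o_G}$, which is $\mathrm{ks}(\delta)$ under $\Hom(\o_G,t_{G^*})\cong t_G\otimes t_{G^*}$. This is precisely the class produced in the previous step, which proves the identity.

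The main obstacle is the bookkeeping across the Grothendieck--Messing dictionary rather than any deep new input: I must match Illusie's normalization of the torsor in \ref{locally unobstructed}(2) with the crystalline one, track the single factor of $p$ (and its sign) relating $\psi_1-\psi_2=p\delta$ to $\nabla_\delta$, and confirm that the canonical divided-power structure on $(p)\subset W_2$ behaves correctly (including the prime $p=2$). Conceptually the statement is close to tautological once one accepts that the connection is the infinitesimal comparison of pullbacks, so the real work lies in making these identifications consistent.
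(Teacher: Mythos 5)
Your proposal is sound in outline but takes a genuinely different route from the paper. You argue entirely on the crystalline side: deformations of $G_{U\cap V}$ across the divided-power thickening $A\hookrightarrow A'$ are lifts of the Hodge filtration (Grothendieck--Messing), the two pullbacks $\psi_1^*,\psi_2^*$ of the crystal are identified by the Taylor/stratification isomorphism, whose first-order term is $p\,\nabla_\delta$, and the resulting difference of filtrations is $\theta_G(\delta)=\mathrm{ks}(\delta)$. The paper never invokes Grothendieck--Messing: it characterizes $\mathrm{ks}(\delta|_x)$ by Illusie's deformation-theoretic formula $[f^*G]-[G|_x\otimes_k k[\epsilon]]$ (\cite[4.8.1]{Ill}), reduces the asserted identity to a pointwise statement along a section $x'$ of $C'$, and then runs a purely formal torsor computation, introducing the auxiliary square-zero extension $B'/I$ with $I=(\tilde s^2,ps)$ which surjects simultaneously onto $B'/I_1\cong W_2$, $B'/I_2\cong W_2$ and $k[\epsilon]$, and exploiting the linear relation $\phi_2^*=\phi_1^*+\phi_3^*$ on $J\cong k^2$. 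Your route is more conceptual and makes the appearance of the Higgs field transparent, but it requires two compatibilities that the paper's cotangent-complex argument sidesteps: (i) matching Illusie's torsor structure on $\Def(G,i)$ with the Grothendieck--Messing torsor of filtration lifts (this is essentially the content of \cite[A.2.3]{Ill} and is not free), and (ii) the vanishing of the higher divided-power terms in the comparison isomorphism. On (ii), note that for $p=2$ one has $\gamma_2(2\delta(t))=2\delta(t)^2$, which is nonzero in $W/4$, so the comparison map acquires a genuine $\nabla^2$-term and your identity $\psi_1^*x-\psi_2^*x=p\,\nabla_\delta(x)$ holds as stated only for $p$ odd; the case $p=2$, which you flag but do not resolve, needs a separate argument or exclusion, whereas the paper's argument is insensitive to the prime. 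With those two points supplied, your proof goes through and is a legitimate alternative to the one in the paper.
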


\begin{proof}

By (\cite[4.8.1]{Ill}), we have 
\begin{equation}  \label{ks equation 2}
\rm{ks}(\d|_x)=[f^*G]-[G_{|x} \otimes_k k[\e]] \end{equation}
where $\e^2=0$ and $f:k[\e] \ra C$ is induced from the tangent vector $\d_{|x}$. Let $f(\e)=s \,(\text{mod } \frak{m}^2_x)$ where $\frak{m}_x$ is the maximal ideal of $x$ in $A$ and $s$ is actually the generator of $\frak{m}_x/\frak{m}^2_x$. Since $A_x$ is a UFD of dimension 1, $A_x$ is a PID and thus $\frak{m}_s$ is generated by $s$. 

For any $x\in U\cap V$, by Hensel's lemma, there exists a section $x'$  of $C' \ra W_2$  lifting the point $x$. We can choose the ideal $I_{x'}$ such that the $x'$ is given by $A'\ra A'/I_{x'}$ locally on $U\cap V$. 

Based on (\ref{ks equation 2}) it suffices to show 
\[[G_{U'}\otimes_{\p_1}A'/I_{x'}]-[G_{U'}\otimes_{\p_2} A'/I_{x'}]=[f^*G]-[G_{|x} \otimes_k k[\e]].\]
We can prove the result by local computation. 
\begin{equation} \label{the diagram}
\xymatrix{
B \ar@{-->}[r] \ar[d]^{.p} & A \ar[d]^{.p} \ar[r]^{/\frak{m}_x}  & k \ar[d]^{.p} \\
B' \ar[d] \ar@<2pt>[r]^{\p_1} \ar@<-2pt>[r]_{\p_2} & A' \ar[d] \ar[r]^{/I_{x'}} & W_2\ar[d]\\
B \ar[r] & A \ar[r] & k
} \end{equation} where $\p_2-\p_1=p\d$. It is easy to see that the top arrow $A\ra k$ is the quotient by the maximal ideal $\frak{m}_x$ of $x$. 

Let  $I_1=(\p_1)^{-1}(I_{x'})$ and $I_2=(\p_2)^{-1}(I_{x'})=(\p_1+p\d)^{-1}(I_{x'})$. Note $U'=\Spec B'$ and then 
\[G_1:=G_{U'} \otimes {B'/I_1}, G_2:=G_{U'}\otimes{B'/I_2}\] 
are both deformations of $G_U \otimes k$ to $W_2$ with
\[[G_{U'}\otimes_{\p_1}A'/I_{x'}]-[G_{U'}\otimes_{\p_2} A'/I_{x'}]=[G_2 ]-[G_1].\]

Since $s\in I_1/p=\frak{m}_x$, we can choose a lifting $\tilde s$ of $s$ to $B'$ such that $\tilde s \in I_1$. Then $(\p_1+p\d)(\tilde s - p)=0$, i.e. $\tilde s-p \in I_2$. Since $\frak{m}_x=(s)$, we have 
\[I_1=(\tilde s), I_2=(\tilde s-p).\]

Let $I=(\tilde s^2, ps)$. Note $I\subset I_1 \cap I_2$ and since $U'$ is an affine flat scheme over $W_2$, there exists an injection 
\begin{equation}\label{the i} i: W_2 \hookrightarrow B'/I.\end{equation}

Note $B'/(\tilde s, p)=k$ and there exists a section $k \ra B'/(\tilde s^2, p)$. Thus $B'/(\tilde s ^2,p )=k[\e]/\e^2$. We have the following diagram: 
\begin{equation}  \label{the local deformation}
\xymatrix{
&k[\e] \ar[dl]\\
k&&B'/I \ar[ul]_{\phi_3} \ar@<2pt>[dl]^{\phi_2} \ar@<-2pt>[dl]_{\phi_1} \ar[ll]_J\\
&W_2 \ar[ul]
}\end{equation} where $\phi_1(\tilde s)=0, \phi_2(\tilde s)=p, \phi_3(p)=0$ and $(B'/I)/J=k$.  The ideal $J$ is generated by $(\tilde s, p)$.

 We consider the triple 
\[\{G_{U'} \otimes B'/I, G_1 \otimes_{W_2} B'/I, G_2 \otimes_{W_2} B'/I\}\] 
of deformations of $G_U \otimes k$ on $B'/I$ where $G_i \otimes_{W_2} B'/I$ is through $i$ (see \ref{the i}). Let
\[\begin{aligned}
& \xi_1&=&[G_{U'} \otimes B'/I]-[ G_1 \otimes B'/I] &\\
&\xi_2&=&[G_{U'} \otimes B'/I]-[ G_2 \otimes B'/I] &\\
&\xi_3&=&[ G_2 \otimes B'/I]-[ G[p] \otimes B'/I]&
\end{aligned} \]where $\xi_i \in t_{G_x} \otimes t_{G^*_x} \otimes_k J $(prop. \ref{locally unobstructed}). 

Then $\phi_i$ induces 
\[\begin{aligned}
\phi^*_i: t_G\otimes t_{G^*}\otimes_k J \ra & t_G \otimes t_{G^*} \otimes_k pW_2, i\in\{1,2\}.\\
\phi^*_3: t_G \otimes t_{G^*} \otimes_k J \ra & t_G \otimes t_{G^*} \otimes_k k[\e].
\end{aligned}\]

Since $G$ has height 2, $t_{G_x}$ is a one dimensional space over $k$. Hence $t_{G_x}\otimes t_{G^*_x}\cong k$.

Choose the canonical basis $(\e,p),(p),(\e)$ for $J, pW_2, \e k[\e]$, respectively. Then we can write 
\[\phi^*_i: k^2 \ra k, 1\leq i \leq 3. \] Then for any $a,b\in k$

\[\begin{aligned}
\phi^*_1(a\e+bp)=&b,\\
\phi^*_2(a\e+bp)=&a+b,\\
\phi^*_3(a\e+bp)=&a.
\end{aligned}\]In particular, 
\begin{equation} \label{key equation} \phi^*_2=\phi^*_1+\phi^*_3.\end{equation}

Then we have the following relations: \begin{enumerate}
\item $\phi^*_1(\x_1)=0$, $\xi_1=\x_2+\x_3$, 
\item $\phi^*_2(\x_2)=0$, $\phi^*_3(\x_2)=[G_U \otimes_f k[\e]]-[G_U \otimes_k k[\e]]$,
\item $\phi^*_1(\x_3)=\phi^*_2(\x_3)=[G_2 ]-[G[p]].$
\end{enumerate}

For (1), the pull back of the pair $\{G_{U'} \otimes B'/I, G[p] \otimes B'/I\}$ through $\phi_1$ is identically $G[p]$. For (2), $\phi^*_3(B'/I)=B/(s^2)$ and then $\phi^*_3(G_{U'} \otimes B'/I)=G_U \otimes_f k[\e]$. Similarly, one can verify the other formulas.

Hence 
\begin{equation} \label{the final equation} 0=\phi^*_1(\x_1)=(\phi^*_2-\phi^*_3)(\x_2+\x_3)=\phi^*_2(\x_3)-\phi^*_3(\x_2)\end{equation}
which implies \[[G_1]-[G_2 ]=[G_U \otimes_k k[\e]]-[G_U \otimes_f k[\e]]\] as an element in $(t_G\otimes t_{G^*}) (k)$. 

Therefore, the difference of the two classes $[G\otimes_{\p_1}A'_{|x'}]-[G\otimes_{\p_2} A'_{|x'}]$ is the same as the difference between the trivial deformation to $k[\e]$ and the deformation given by $s$, which is exactly $\mathrm{ks}(\d_{|x})$.\end{proof}

 Fix a lifting $ C \ra C_2$ of $C$. By \ref{obstruction and torsor}, $\mathrm{ob}_{C_2}(G)=(s_{ij})\in H^1(C, t_G\otimes t_{G^*})$ where $s_{ij}=[G{_{U'_{ij}}}]-[G{_{U'_{ji}}}]$. Then for $\i: \text{Spec }k\ra \text{Spec } W_2,$ by \ref{KS}, the Kodaira-Spencer map induces a bijection 
\begin{align} \label{bijection} \begin{split}
\Def(C, \iota) \ra& H^1(C, t_G \otimes t_{G^*})\\
C'_2\,  \mapsto & \rm{ks}([C'_2]-[C_2]).  \end{split}
\end{align} 
Then there is a lifting of $C$ corresponding to 0 in $H^1(C, t_G\otimes t_{G^*})$. Explicitly, we change the gluing $U'_{ij} \ra U'_i$ by the derivation $\d_{ij}=\rm{ks}^{-1}(-s_{ij})$ and then from \ref{KS}, the new $G_{U'_i}$ is isomorphic to $G_{U'_j}$.  

That is how we adjust the lifting of $C$ to kill the obstruction. It is clear that such a lifting is unique.  Therefore for any BT or BT$_n$ group $G \ra C$, once the curve $C$ is a versal deformation, we obtain a unique first-order deformation of $C$ such that $G$ can be deformed. 

Now we have finished the first order thickening.

\section{Higher order deformation} \label{higher order deformation}

\subsection{Step III} \label{higher order}
For the higher order deformation problem, we would like to complete the following diagram
\[\xymatrix{
G\ar[r] \ar[d]&G_n \ar[d] \ar@{-->}[r] & ? \ar@{-->}[d]\\
C \ar[r]& C_n\ar[r] & C_{n+1}.
}\] It is easy to check that \ref{obstruction and torsor}(1) is still true for the higher order deformation, and thus the obstruction class $\mathrm{ob}_{C_{n+1}}(G_n)$ is still in $H^1(C, T_C)$. 

\begin{prop}\label{higher KS} Theorem \ref{KS} is also true for higher order thickening. 
\end{prop}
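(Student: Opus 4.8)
The plan is to run the argument of Theorem~\ref{KS} essentially verbatim, with the first-order data replaced by the data of the square-zero thickening $C_n\hookrightarrow C_{n+1}$. Since $C_n$ is flat over $W_n$ and $C_{n+1}$ over $W_{n+1}$, the defining ideal $J$ of $C_n$ in $C_{n+1}$ satisfies $J^2=0$ and $J\cong\cO_C$ via multiplication by a fixed power $p^m$ of $p$ (with $m=1$ recovering \ref{KS}). The two liftings of the gluing now differ by $\p_1-\p_2=p^m\d$ with $\d\in\mathrm{Der}(A)=T_C(U\cap V)$, and by \ref{locally unobstructed} the difference $[G_{U'}\otimes_{\p_1}A']-[G_{U'}\otimes_{\p_2}A']$ lies in $(t_G\otimes t_{G^*}\otimes J)(U\cap V)$, which the identification $J\cong\cO_C$ turns into an element of $(t_G\otimes t_{G^*})(U\cap V)$, the same target as $\rm{ks}$.

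First I would reduce to a pointwise statement at each $x\in U\cap V$. The formula \ref{ks equation 2} is intrinsic to the special fibre $C$ and is therefore unchanged, still reading $\rm{ks}(\d|_x)=[f^*G]-[G_{|x}\otimes_k k[\e]]$. The two structural inputs of the proof of \ref{KS} also survive untouched: the torsor structure of local deformations under $t_G\otimes t_{G^*}\otimes J$ from \ref{locally unobstructed}, and the one-dimensionality of $t_G\otimes t_{G^*}$ forced by $\Ht G=2$. Thus the skeleton of the argument---introduce the section $x'$ lifting $x$, a lift $\tilde s$ of the uniformizer with $\tilde s-p^m\in I_2$, the intermediate ring $R=B'/I$, and the three comparison classes $\x_1,\x_2,\x_3$---goes through after replacing $p$ by $p^m$ everywhere.

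The delicate point, and the step I expect to cost the most, is the intermediate local ring. In the first-order case one takes $R=B'/(\tilde s^2,ps)$, a small Artinian $k$-algebra whose square-zero ideal $J_R$ is two-dimensional with basis $(\e,p)$, so that the whole computation collapses to the linear identity $\phi_2^*=\phi_1^*+\phi_3^*$ on $k^2\to k$. For $m\geq 2$ this ring is no longer Artinian of that shape: the maps $\phi_1,\phi_2$ must land in the thick base $W_{n+1}$, on which $p$ is nilpotent of order $>2$, so $R$ carries nontrivial intermediate powers of $p$ and $J_R$ is no longer two-dimensional, while $\phi_3$ still factors through $R/p$. The remedy is to observe that the classes $\x_i$ are automatically supported on the two top-order directions spanned by $p^m$ and by $\tilde s$: all deformations entering the comparison already coincide over $W_n$ and agree to first order at $x$, so their differences have no components in the intermediate powers of $p$. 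Restricting $\phi_1^*,\phi_2^*,\phi_3^*$ to this two-dimensional quotient of $J_R$ reproduces exactly the matrices of the proof of \ref{KS}---$\phi_1^*(\a\tilde s+\b p^m)=\b$, $\phi_2^*(\a\tilde s+\b p^m)=\a+\b$, $\phi_3^*(\a\tilde s+\b p^m)=\a$---whence $\phi_2^*=\phi_1^*+\phi_3^*$, and \ref{the final equation} then yields $[G_1]-[G_2]=[G_U\otimes_k k[\e]]-[G_U\otimes_f k[\e]]$ in $(t_G\otimes t_{G^*})(k)$, as required. Making the support claim precise, by a suitable choice of the ideal $I$ (for instance adjoining $p\tilde s$ to kill the mixed terms), is where the real work lies.

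Conceptually the cleanest route, which I would use to organize---or, if preferred, to replace---the ring-theoretic bookkeeping, is Grothendieck--Messing theory. A deformation of $G_n$ along $C_n\hookrightarrow C_{n+1}$ is a horizontal lift of the Hodge filtration inside the Dieudonn\'e crystal $\DD(G_n)$ evaluated on $C_{n+1}$, and the discrepancy between the two gluings is precisely the transport of that filtration along $p^m\d$ computed by the Gauss--Manin connection $\nabla$. Since $\nabla$ induces the Higgs field $\theta_G$ and hence $\rm{ks}$, this transport equals $p^m\cdot\rm{ks}(\d)$, which under $J\cong\cO_C$ (division by $p^m$) is exactly $\rm{ks}(\d)$. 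This makes the order-independence transparent; the only hypothesis to check is that the thickening $W_n\to W_{n+1}$ carries the divided-power structure demanded by the crystalline formalism, which it does, being the canonical one induced from the divided powers of $(p)\subset W$.
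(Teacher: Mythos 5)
Your proposal is correct and follows essentially the same route as the paper: the paper likewise reruns the proof of \ref{KS} with $p$ replaced by $p^n$, takes the intermediate ring $B_{n+1}/I$ with $I=(\tilde s^2,p\tilde s)$ so that the relevant square-zero ideal $J=(\tilde s,p^n)$ is two-dimensional \emph{over the base $W_n$} (exactly your ``support on the two top-order directions'' observation, realized by setting up the torsor relative to $W_n$ rather than $k$), recovers $\phi_2^*=\phi_1^*+\phi_3^*$, and finishes by reducing the right-hand side mod $p$ to identify it with $\mathrm{ks}(\d)$. Your closing Grothendieck--Messing reformulation is a clean alternative the paper does not pursue, but it is not needed for the argument.
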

\begin{proof}We prove it by induction. 

Based on the induction hypothesis that the proposition is true for $n-$th order thickening, we just mimic the proof of \ref{KS}. Similar to the diagram \ref{the geometric diagram}, we can adjust the lifting $C_n \ra C_{n+1}$ by 

\[\xymatrix{
U_{n+1} & (U_{n+1}\cap V_{n+1}) \ar@<2pt>[l] \ar@<-2pt>[l] \ar[r] & V_{n+1}\\   
U_n \ar[u] & U_n\cap V_n \ar[u] \ar[l] \ar[r] & V_n \ar[u] 
}\]

In terms of algebra, correspondingly, the diagram (\ref{the diagram}) becomes : 
\[\xymatrix{
B\ar@{-->}[r]^\d \ar[d]^{.p^n}& A \ar[d]^{. p^n} \ar[r]^{/\frak{m}_x} & k \ar[d]^{.p^n}\\
B_{n+1} \ar@<2pt>[r]^{\p+p^n \d} \ar@<-2pt>[r]_\p \ar[d] & A_{n+1} \ar[r]^{/I_{x_{n+1}}} \ar[d]& W_{n+1}(k) \ar[d]\\
B_n \ar[r] & A_n \ar[r]^{/I_{x_n}} & W_n
}\]where $\d_{|x}$ is given by $f: k[\e] \ra U\cap V, \e \mapsto s \in \frak{m}_x/\frak{m}^2_x$. 

Let $G_{U_{n+1}}$ be any deformation of $({G_n})_{U_n}$ to $U_{n+1}$. Let
\[I_1:=\psi^{-1}(I_{x_{n+1}}), I_2:=(\psi+p\d)^{-1}(I_{x_{n+1}}).\]
Then $B_{n+1}/I_1 \cong B_{n+1}/I_2 \cong W_{n+1}(k).$We use $\tilde s$ to denote the lifting of $s$ to $B_{n+1}$ such that $\tilde s\in I_1$.
Then 
\[I_2=(\tilde s-p^n), I_1 =(\tilde s).\]

Let 
\[G_1:= G_{U_{n+1}} \otimes B_{n+1}/I_1, G_2:= G_{U_{n+1}} \otimes B_{n+1}/I_2.\]

Choose $I=(\tilde s^2, p\tilde s)$ and $J=(\tilde s, p^n)$. Similar to diagram \ref{the local deformation}, we have 

\[\xymatrix{
&&B_{n+1}/I_1 \ar[dl]&\\
k &W_n \ar[l]&B_{n+1}/I_2\ar[l]& B_{n+1}/I \ar[dl]^{\phi_3} \ar[ul]_{\phi_1} \ar[l]_{\phi_2}\\
 &&W_n[\e]/(p\e) \ar[ul]
}\] where $\phi_1(\tilde s)=0, \phi_2(\tilde s)=p^n, \phi_3(p^n)=0$ and $(B_{n+1}/I)/J\cong W_n$. 

As in the proof of \ref{KS}, let 
\[\xi_1=[G_{U_{n+1}}\otimes B_{n+1}/I]-[G_1 \otimes B_{n+1}/I]\]
\[\xi_2=[G_{U_{n+1}} \otimes B_{n+1}/I]-[G_2 \otimes B_{n+1}/I]\]
\[\xi_3=[G_2\otimes B_{n+1}/I]-[G_1 \otimes B_{n+1}/I]\]
where $\xi_i \in t_{G_x}\otimes t_{G^*_x} \otimes J \cong k^2$.

Similar to Equation \ref{key equation}, we have 
\[\phi^*_2 =\phi^*_1+\phi^*_3.\]
Hence the same result as Equation \ref{the final equation} holds for the higher order deformation.

We obtain that 
\[[G_{U_{n+1}}\otimes B_{n+1}/I_1]-[G_{U_{n+1}}\otimes B_{n+1}/I_2]= [G_{n}\otimes B_n/I]-[G_n\otimes_{B_n/{I_{x_n}}} W_n(\e)].\] The right hand side is the deformation of $G_{|U_n}\otimes W_n$ and  mod $p$, it is just 
\[[G\otimes_f k[\e]]-[G\otimes_k k[\e]]=\rm{ks}(\d).\] 
Therefore by functorality, the right hand side 
\[ [G_n\otimes B_n/I]-[G_n\otimes_{W_n} W_n(\e)]=\rm{ks}(\d). \]

Hence the result of \ref{KS} is also true for the higher order deformation. 
\end{proof}

Let $\iota: \text{Spec }W_n \ra \text{Spec }W_{n+1}$. Proposition \ref{higher KS} implies (\ref{bijection}) holds in the higher order case: 
\[\Def(C_n, \iota) \ra H^1(C,t_G\otimes t_{G^*})\] 
is bijective. Then we can always adjust the lifting $C_{n+1}$ so that it admits a lifting $G_{n+1}$ as BT or BT$_n$ groups. 

In the following, we only consider the BT group case. By \ref{the uniqueness remark}, for a BT group $G$, the lifting of $G$ to $C_2$ is strongly unique. Since \ref{reduce to truncated 0}(2) and \ref{torsor of BT} also hold for higher deformation case, the lifting of BT group $G$ is strongly unique for the deformation problem $C \hookrightarrow C_{n-1} \hookrightarrow C_n$. In this case, we have a canonical choice of deformations $\{C_n\}$ of the curve $C$ such that each $C_n$ admits the unique BT group $G_n$. 
\begin{prop} \label{uniqueness for BT}
For a BT group $G$, the sequence of deformations $\{C_n\}$ of curves $C$ which admits a deformation of $G$ is unique. 
\end{prop}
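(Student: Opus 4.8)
The goal is to prove \ref{uniqueness for BT}: that the sequence of deformations $\{C_n\}$ admitting a compatible deformation of the BT group $G$ is unique. The plan is to establish uniqueness inductively, at each stage $n$ showing both that the curve-lifting $C_n$ is forced and that the BT-group lifting $G_n$ is forced (up to unique isomorphism). Much of the machinery is already in place: \ref{the uniqueness remark} gives the base case at $W_2$, and the bijection \ref{bijection}, now available in the higher-order setting via \ref{higher KS}, is the key tool at each inductive step.

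First I would set up the induction: suppose the deformations $C_n$ and $G_n$ are uniquely determined through level $n$, and consider the passage to $C_{n+1}$. The obstruction class $\mathrm{ob}_{C_{n+1}}(G_n)$ lives in $H^1(C, t_G \otimes t_{G^*}) \cong H^1(C, T_C)$ by \ref{obstruction and torsor}(1) (which holds in the higher-order case, as noted at the start of \ref{higher order}). By \ref{higher KS}, the Kodaira--Spencer map induces a bijection
\[\Def(C_n, \iota) \ra H^1(C, t_G \otimes t_{G^*}),\]
so there is \emph{exactly one} lifting $C_{n+1}$ of $C_n$ for which this obstruction vanishes, namely the one corresponding to $0$. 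Any other choice of $C_{n+1}$ gives a nonzero obstruction and hence admits no lifting of $G_n$ at all. This pins down $C_{n+1}$ uniquely as the curve-lifting compatible with a deformation of $G$.

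Next I would address uniqueness of the group lifting $G_{n+1}$ over this distinguished $C_{n+1}$. Since $g(C) \geq 2$, we have $H^0(C, T_C) = 0$, so \ref{torsor of BT} (which holds in the higher-order case) shows the deformation space $\Def(G_n, \iota)$ is a torsor under $H^0(C, t_G \otimes t_{G^*}) \cong H^0(C, T_C) = 0$ and is therefore a single point: the lifting $G_{n+1}$ is unique up to isomorphism. Moreover \ref{reduce to truncated 0}(2) guarantees that the isomorphism is itself unique, giving strong uniqueness. This completes the inductive step, and the induction yields uniqueness of the entire sequence $\{C_n\}$.

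The main obstacle I anticipate is purely expository: assembling the already-proven higher-order analogues (\ref{higher KS}, and the stated extensions of \ref{obstruction and torsor}(1) and \ref{torsor of BT}) into a clean inductive argument, and being careful that ``uniqueness of $\{C_n\}$'' is being claimed at the level of the curves, with the group liftings $G_n$ serving as witnesses that force each choice. The genus hypothesis $g(C) \geq 2$ is doing essential work in both halves of the step — it is what collapses the torsor of group-liftings to a point — so I would make sure that assumption is invoked explicitly rather than buried. No genuinely new computation should be required beyond what \ref{KS} and \ref{higher KS} already supply.
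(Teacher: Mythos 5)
Your proposal is correct and follows essentially the same route as the paper, which (immediately before the proposition) combines the higher-order Kodaira--Spencer bijection from \ref{higher KS} to force $C_{n+1}$, with \ref{the uniqueness remark} (i.e.\ $g(C)\geq 2$ giving $H^0(C,T_C)=0$), \ref{torsor of BT}, and \ref{reduce to truncated 0}(2) to force $G_{n+1}$ up to unique isomorphism. Your version merely organizes this as an explicit induction and is, if anything, slightly more careful than the paper about why uniqueness of $G_n$ at each stage is needed to pin down the obstruction at the next.
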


\subsection{Step IV}
In this step, we always assume  $G$ is a BT group. By \ref{uniqueness for BT}, we have a formal scheme \[\hat G=\lim G_n \ra \hat C=\lim C_n \ra \rm{Spf}(W).\] A natural question is whether there exists actual schemes \[G'\ra C' \ra W\] that base change to $W_n$, it is $G_n \ra C_n$. 

Recall the Grothendieck existence theorem: let $A$ be a noetherian ring, $I$ an ideal of $A$ and $Y= \mbox{Spec\,}A$, $Y_n=\mbox{Spec\,}A/I^{n+1}$, $\hat Y=\text{colim}_n Y_n=\mathrm{Spf}(A)$. 

\begin{thm} \label{EGA}(\cite[5.1.4]{EGA} ) Let X be a noetherian scheme, separated and of finite type over $Y$, and let $\hat X$ be its $I-$adic completion.  Then the functor $F \mapsto \hat F$ from the category of coherent sheaves on $X$ whose support is proper over $Y$ to the category of coherent sheaves on $\hat X$ whose support is proper over $\hat Y$ is an equivalence. 
\end{thm}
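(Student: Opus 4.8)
The plan is to establish the claimed equivalence by proving its two halves separately: full faithfulness of the completion functor $\mathcal{F} \mapsto \hat{\mathcal{F}}$, and its essential surjectivity. Throughout I use the hypothesis (implicit in the statement) that $A$ is separated and complete for the $I$-adic topology; this completeness is precisely what upgrades the $I$-adic comparison isomorphisms into honest isomorphisms of modules. The overall shape of the argument is: reduce all Hom-computations to cohomology, invoke the theorem on formal functions, and then bootstrap from free sheaves to arbitrary coherent sheaves via a finite presentation.

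First I would prove full faithfulness. For coherent sheaves $\mathcal{F}, \mathcal{G}$ on $X$ with support proper over $Y$, the internal hom $\mathcal{H}om(\mathcal{F},\mathcal{G})$ is again coherent with proper support, and $\Hom(\mathcal{F},\mathcal{G}) = H^0(X, \mathcal{H}om(\mathcal{F},\mathcal{G}))$. The comparison theorem (theorem on formal functions, \cite{EGA}) supplies a canonical isomorphism between the $I$-adic completion of $H^i(X,\mathcal{H})$ and $H^i(\hat{X},\hat{\mathcal{H}})$ for every coherent $\mathcal{H}$ with proper support. Since the support is proper over the noetherian $Y$, each $H^i(X,\mathcal{H})$ is a finitely generated $A$-module, hence already $I$-adically complete as soon as $A$ is; therefore $H^0(X,\mathcal{H}) \to H^0(\hat{X},\hat{\mathcal{H}})$ is an isomorphism. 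Taking $\mathcal{H} = \mathcal{H}om(\mathcal{F},\mathcal{G})$ and identifying $H^0(\hat{X}, \widehat{\mathcal{H}om(\mathcal{F},\mathcal{G})})$ with $\Hom(\hat{\mathcal{F}},\hat{\mathcal{G}})$ yields full faithfulness.

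Second, essential surjectivity, which I expect to be the genuine difficulty. Given a coherent $\hat{\mathcal{F}}$ on $\hat{X}$ with proper support, I would first use the support condition to replace $X$ by a closed subscheme proper over $Y$, reducing to the case $X$ proper over $Y$. Chow's lemma together with d\'evissage (noetherian induction on the dimension of the support) then reduces to the case $X$ projective over $Y$, where a relatively ample $\mathcal{O}(1)$ is available. The technical heart is the formal analogue of Serre's theorems: for $n \gg 0$ the twist $\hat{\mathcal{F}}(n)$ is generated by finitely many global sections and has vanishing higher cohomology. These follow by feeding the algebraic Serre vanishing, applied level by level, into the comparison isomorphism and passing to the inverse limit. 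Global generation then produces a surjection $\mathcal{O}_{\hat{X}}(-n)^{\oplus m} \twoheadrightarrow \hat{\mathcal{F}}$; the kernel is again coherent, so repeating the construction gives a finite presentation of $\hat{\mathcal{F}}$ by sums of twisted structure sheaves.

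The last step is to algebraize this presentation. The free formal sheaves $\mathcal{O}_{\hat{X}}(-n_i)^{\oplus m_i}$ are literally the completions of $\mathcal{O}_X(-n_i)^{\oplus m_i}$, and by the full faithfulness just established the presenting homomorphism between them descends to a unique algebraic homomorphism on $X$. Its cokernel $\mathcal{F}$ is coherent with proper support, and since completion is exact one gets $\hat{\mathcal{F}} \cong$ the completion of $\mathcal{F}$, placing $\hat{\mathcal{F}}$ in the image of the functor. The hardest and most technical point is the formal Serre vanishing and global generation, combined with the d\'evissage that carries the general proper case down to the projective one; once those comparison and finiteness inputs are secured, everything downstream is formal.
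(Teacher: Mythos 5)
This statement is quoted verbatim from EGA III 5.1.4 and the paper supplies no proof of it at all --- it is invoked as a black box (together with its corollaries) to algebraize the formal curve and the formal BT group in Step IV --- so there is no internal argument to compare yours against. Your outline is a correct reproduction of the standard EGA/Stacks Project proof: full faithfulness from the comparison theorem (theorem on formal functions) applied to $\mathcal{H}om(\mathcal{F},\mathcal{G})$, using that finitely generated modules over the complete noetherian base are already $I$-adically complete; essential surjectivity by cutting down to a proper support, reducing to the projective case by Chow's lemma and d\'evissage, and then presenting the formal sheaf by twists of the structure sheaf and algebraizing the presentation via the already-established full faithfulness. The only step I would flag as needing more than a wave is the ``formal Serre vanishing and global generation'': the point is not just applying algebraic Serre vanishing level by level but obtaining a single twist $n$ that works \emph{uniformly} for the whole inverse system $(\mathcal{F}_k)$, which is where the finite generation of the associated graded module over $\bigoplus_k I^k/I^{k+1}$ enters; once that uniformity is secured the rest is, as you say, formal.
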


\begin{thm}(\cite[5.4.5]{EGA} ) Let $\scr{X} \ra \mathrm{Spf}(A)$ be a formal scheme and $\scr{L}$ an ample line bundle on over $\scr{X}$, then $(\scr{X}, \scr{L})$ are algebraizable. 
\end{thm}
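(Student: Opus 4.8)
The plan is to deduce this from the existence theorem \ref{EGA}: I would realize the formal scheme $\scr{X}$ as the $I$-adic completion of an honest closed subscheme of a projective space over $\Spec(A)$, and then transport the defining ideal sheaf back down through the equivalence of \ref{EGA}. Throughout, write $A_n=A/I^{n+1}$, let $\scr{X}_n=\scr{X}\times_{\mathrm{Spf}(A)}\Spec(A_n)$ be the infinitesimal layers so that $\scr{X}=\colim_n \scr{X}_n$, and assume (as ampleness requires) that $\scr{X}$ is proper over $\mathrm{Spf}(A)$, so that each $\scr{X}_n$ is proper over $A_n$ and $\scr{X}_0$ is proper over the residue ring $A_0$.

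First I would use $\scr{L}$ to build a projective embedding. Since $\scr{L}_0:=\scr{L}|_{\scr{X}_0}$ is ample on the proper $A_0$-scheme $\scr{X}_0$, some power $\scr{L}_0^{\otimes d}$ is very ample and globally generated, giving a closed immersion $\scr{X}_0\hookrightarrow \PP^N_{A_0}$ with $\scr{L}_0^{\otimes d}\cong \cO(1)$ pulled back. The key step is then to lift this compatibly through all the layers to a single closed formal immersion $j:\scr{X}\hookrightarrow \hat{\PP}^N_A$ into the completion of $\PP^N_A$, with $j^*\cO(1)\cong\scr{L}^{\otimes d}$. This is where the real work lies: to extend a chosen basis of $H^0(\scr{X}_0,\scr{L}_0^{\otimes d})$ to compatible bases of the $H^0(\scr{X}_n,\scr{L}_n^{\otimes d})$, the obstruction at each stage lives in $H^1(\scr{X}_0,\scr{L}_0^{\otimes d}\otimes_{A_0}(I^n/I^{n+1}))$. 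Enlarging $d$ makes all these groups vanish at once (Serre vanishing applied uniformly, using coherence over the relevant graded ring), the transition maps on global sections then satisfy the Mittag--Leffler condition, and the layers glue to the desired $j$.

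Granting the embedding, the rest is formal. The closed formal immersion $j$ corresponds to a coherent ideal $\hat{\scr{I}}\subset\cO_{\hat{\PP}^N_A}$. Because $\PP^N_A$ is projective, hence proper, over $\Spec(A)$, Theorem \ref{EGA} produces a unique coherent $\scr{I}\subset\cO_{\PP^N_A}$ whose completion is $\hat{\scr{I}}$ (the inclusion into $\cO$ descends since the functor is fully faithful). Let $X\subset\PP^N_A$ be the closed subscheme cut out by $\scr{I}$; then $X$ is projective over $\Spec(A)$ and its completion $\hat{X}$ is canonically $\scr{X}$. Setting $L:=\cO_X(1)$ gives an ample line bundle on $X$ whose completion is $\scr{L}^{\otimes d}$; and since $\scr{L}$ is itself an invertible coherent sheaf on the completion of the proper scheme $X$, a final application of \ref{EGA} algebraizes it to an invertible sheaf on $X$ restricting to $\scr{L}$ (and whose $d$-th power is $L$). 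This yields the desired algebraization $(X,L)$ of $(\scr{X},\scr{L})$.

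The main obstacle, as indicated, is producing the single embedding $j$ of the entire formal scheme into formal projective space --- i.e. choosing the power $d$ and the defining sections uniformly across all infinitesimal neighborhoods so that the embeddings are mutually compatible. Everything downstream of that is a direct consequence of the existence theorem \ref{EGA} together with the projectivity of $\PP^N_A$ over $\Spec(A)$.
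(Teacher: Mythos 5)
The paper gives no proof of this statement: it is quoted directly from EGA III 5.4.5 and used as a black box, so the ``paper's own proof'' is Grothendieck's. Your proposal is a correct outline of exactly that standard argument --- embed the infinitesimal layers compatibly into projective space via a sufficiently high power of $\scr{L}$ (with the uniform Serre vanishing over the associated graded ring handling the section-lifting obstructions), then algebraize the ideal sheaf and the line bundle itself by the existence theorem --- so it takes essentially the same route as the cited source.
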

 Hence the formal projective curve $\{C_n\}$ is algebraizable. Denote the algebraic curve as $C'$. 

\begin{cor}Let $X/Y$ be as the \ref{EGA}. Then \begin{enumerate}
\item  $Z\ra \hat Z$ is a bijection from the set of closed subschemes of $X$ which are proper over $Y$ to the set of closed formal subschemes of $\hat X$ which are proper over $\hat Y$.
\item $Z \ra \hat Z$ is an equivalence from the category of finite $X-$schemes which are proper over $Y$ to the category of finite $\hat X-$formal schemes which are proper over $\hat Y$.
\item If both of $X$ and $Z$ are proper, then $(f:X\ra Z) \mapsto (\hat f: \hat X \ra \hat Z)$ is a bijection from the set of morphisms between $X$ and $Z$ to the set of formal morphisms between $\hat X $ and $\hat Z$. In particular, any finite locally free group schemes over $X_n$ can be algebraized to a finite locally free group scheme over $X$. 
\end{enumerate}
\end{cor}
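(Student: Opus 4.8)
The plan is to bootstrap the coherent-sheaf equivalence of Theorem \ref{EGA} to the three successively richer categories in the statement, checking at each stage that the extra structure carried by a sheaf (an ideal, an algebra multiplication, a morphism) is preserved by the equivalence. The one feature of $F \mapsto \hat F$ that does all the work beyond exactness is that it is compatible with tensor products of coherent sheaves, i.e.\ $\widehat{F \otimes_{\mathcal{O}_X} G} \cong \hat F \otimes_{\mathcal{O}_{\hat X}} \hat G$; this is what lets us transport multiplications.

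For (1), I would encode a closed subscheme $Z \subseteq X$ proper over $Y$ by its structure sheaf $\mathcal{O}_Z = \mathcal{O}_X / \mathcal{I}_Z$, a coherent $\mathcal{O}_X$-module whose support is $Z$ and hence proper over $Y$, together with the canonical surjection $\mathcal{O}_X \twoheadrightarrow \mathcal{O}_Z$. Since an equivalence preserves cokernels, this completes to a surjection $\mathcal{O}_{\hat X} \twoheadrightarrow \widehat{\mathcal{O}_Z}$, whose kernel $\widehat{\mathcal{I}_Z}$ is the ideal of a closed formal subscheme $\hat Z$. The target $\widehat{\mathcal{O}_Z}$ inherits the correct $\mathcal{O}_{\hat X}$-algebra structure — so that $\hat Z$ is genuinely a subscheme and not merely a module quotient — because the multiplication $\mathcal{O}_Z \otimes_{\mathcal{O}_X} \mathcal{O}_Z \to \mathcal{O}_Z$ completes, via the tensor compatibility above, to the multiplication on $\widehat{\mathcal{O}_Z}$. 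Essential surjectivity and full faithfulness then make $Z \mapsto \hat Z$ a bijection on closed subschemes with proper support. Part (2) is the identical argument run for arbitrary coherent $\mathcal{O}_X$-algebras rather than quotient algebras: a finite $X$-scheme proper over $Y$ is the relative spectrum $\mathrm{Spec}_X \mathcal{A}$ of a coherent algebra $\mathcal{A}$, and the equivalence sends $\mathcal{A}$ to a coherent $\mathcal{O}_{\hat X}$-algebra $\widehat{\mathcal{A}}$ (and conversely), preserving unit and multiplication, which yields the asserted equivalence of categories.

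For (3) I would pass to graphs. For proper $Y$-schemes $X$ and $Z$, a morphism $f : X \to Z$ is the same datum as its graph $\Gamma_f \subseteq X \times_Y Z$, a closed subscheme whose first projection to $X$ is an isomorphism, and $\Gamma_f$ is proper over $Y$ because $X$ is. Applying (1) to $X \times_Y Z$, whose $I$-adic completion is $\hat X \times_{\hat Y} \hat Z$, carries $\Gamma_f$ to a closed formal subscheme of $\hat X \times_{\hat Y} \hat Z$; since the equivalence is compatible with the projections and, being an equivalence, both preserves and reflects isomorphisms, this formal subscheme again projects isomorphically to $\hat X$, hence is the graph of a unique formal morphism $\hat f : \hat X \to \hat Z$, and every formal morphism arises this way. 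For the final assertion I would present a finite locally free group scheme as a finite $X$-scheme $G$ equipped with multiplication, unit and inverse morphisms $m, e, i$ subject to the group axioms; parts (2) and (3) algebraize these morphisms, and because full faithfulness turns commutative diagrams into commutative diagrams, the axioms persist, so the algebraization is again a group scheme.

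The step I expect to be the only genuinely non-formal point is preserving flatness — the ``locally free'' half of the last claim — since Theorem \ref{EGA} sees modules and algebras but not flatness directly. The hard part will therefore be to show $\widehat{\mathcal{A}}$ is flat over $\mathcal{O}_{\hat X}$ if and only if $\mathcal{A}$ is flat over $\mathcal{O}_X$; I would deduce this from the faithful flatness of the completion maps on local rings together with flat descent, after which finiteness plus flatness over the (Noetherian) base upgrades to local freeness, completing the algebraization of $\hat G = \varprojlim G_n$ to a finite locally free group scheme over $C'$.
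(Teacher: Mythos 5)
Your proof of part (3) is exactly the paper's argument: encode $f$ by its graph $\G_f\subset X\times_Y Z$, algebraize it via part (1), and check the projection to $X$ is still an isomorphism. The difference is in parts (1) and (2): the paper simply cites them (as [Ill 2, 4.5, 4.6]), whereas you reprove them by transporting the extra structure (ideal, algebra multiplication) through the coherent-sheaf equivalence using compatibility with tensor products. Your sketch is the standard proof of those cited results and is essentially correct; the one place it is slightly too quick is full faithfulness for maps $\mathcal{O}_{\hat X}\twoheadrightarrow \widehat{\mathcal{O}_Z}$, since $\mathcal{O}_X$ itself need not have proper support over $Y$ --- one needs the mild strengthening that $\Hom_X(G,F)\ra \Hom_{\hat X}(\hat G,\hat F)$ is bijective whenever only the target $F$ has proper support (which holds because $\mathcal{H}om(G,F)$ then has proper support). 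You also supply the flatness/local-freeness verification for the last claim via faithful flatness of completion of local rings plus descent of flatness; the paper asserts this without comment, so your treatment is more complete on that point. No gaps of substance.
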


\begin{proof} The first and second assertions are  (\cite[ 4.5, 4.6]{Ill 2}). For any $f_n: X_n \ra Z_n$ morphism between formal schemes, the graph of morphism: $(X_n \cong\,) \G_n\subset X_n \times Z_n$ can be algebraized to $\G\subset X\times Z$ which is still isomorphic to $X$ and then the morphism can also be uniquely algebraized. \end{proof}

\begin{prop}
The formal scheme \[\hat G \ra \hat C \ra \mathrm{Spf}(W)\] can be algebraized.
\end{prop}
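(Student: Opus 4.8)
The plan is to algebraize $\hat G$ one truncation at a time. A BT group is an inductive system of finite locally free group schemes, not a single coherent sheaf or a single finite scheme, so \ref{EGA} cannot be applied to $\hat G$ directly. For each fixed $m$, however, the formal object $\hat G[p^m]=\lim_n G_n[p^m]$ is a finite locally free group scheme over $\hat C$ whose support is proper over $\mathrm{Spf}(W)$. Since Step IV has already algebraized $\hat C$ to the proper smooth curve $C'$, I am exactly in the situation of the Corollary following \ref{EGA}: by part (3) there is a finite locally free group scheme $G'[p^m]$ over $C'$, proper over $W$, whose formal completion is $\hat G[p^m]$, and it is unique up to unique isomorphism.

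Next I would transport the remaining structure through the same equivalence. The transition maps $i_m\colon \hat G[p^m]\hookrightarrow \hat G[p^{m+1}]$ and the multiplication-by-$p$ maps are morphisms between schemes finite over $C'$ and proper over $W$; by the morphism bijection in part (3) of the Corollary, each is the algebraization of the corresponding formal morphism, and these algebraizations are compatible in $m$ because the functor is faithful. Thus I obtain a system $\{G'[p^m], i_m\}$ of finite locally free group schemes over $C'$ together with the algebraized multiplication maps.

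It then remains to verify that $\{G'[p^m], i_m\}$ satisfies the axioms of a height $2$ BT group, so that $G':=\colim_m G'[p^m]$ is a BT group whose completion is $\hat G$ and whose restriction to each $C_n$ is $G_n$. Two things must be checked: that each $G'[p^m]$ has order $p^{2m}$, and that $0\to G'[p^m]\xrightarrow{i_m} G'[p^{m+1}]\xrightarrow{p^m} G'[p^{m+1}]$ is exact. The order is the rank of the coherent $\mathcal{O}_{C'}$-algebra $\pi_{m*}\mathcal{O}_{G'[p^m]}$, where $\pi_m\colon G'[p^m]\to C'$ is the structure morphism; its completion is the locally free sheaf of rank $p^{2m}$ attached to $\hat G[p^m]$, and since algebraization is an equivalence of coherent sheaves (\ref{EGA}) and local freeness and rank are detected along the closed fibre, the rank descends to $G'[p^m]$.

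The main obstacle I expect is the preservation of the defining exact sequences, which is not a formal consequence of having algebraized the terms and maps separately. Concretely, one must show that $i_m$ identifies $G'[p^m]$ with the kernel $G'[p^{m+1}][p^m]$. Because the algebraization functor is fully faithful on finite schemes proper over $W$, the isomorphism $\hat G[p^m]\cong \hat G[p^{m+1}][p^m]$, which holds by construction of the formal BT group $\hat G$, will algebraize to the desired isomorphism over $C'$, provided one checks that forming this kernel commutes with $p$-adic completion and that the kernel is again finite locally free over $C'$. Establishing this compatibility, and the accompanying flatness, is the step that requires genuine care; the remainder is a routine translation through the equivalence of categories furnished by Grothendieck existence.
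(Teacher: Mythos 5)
Your proposal follows essentially the same route as the paper: algebraize each truncation $\hat G[p^m]$ and the transition and multiplication maps via the Corollary to \ref{EGA}, verify the BT axioms (exactness of the torsion sequences), and recover $G'$ as the colimit. The kernel-compatibility issue you flag at the end is exactly the point the paper disposes of by invoking the uniqueness of algebraization of closed subschemes and morphisms, so you have correctly identified both the argument and its one delicate step.
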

\begin{proof}
First consider the truncated case, we have an algebraization $G' \ra C'$ of $G_n \ra C_n$ where $G'$ is a finite locally free group scheme over $C'$. It remains to show $G'$ is BT$_n$. It suffices to show it satisfies the exact sequence for any $l\leq k$: 
\[0\ra G'[p^l] \ra G' \ra G'[p^{k-l}] \ra 0.\] Since the algebraization of closed subschemes and morphisms are unique up to isomorphism, the algebraization of $G_n[p^l] \ra G_n$ gives a $p^l-$torsion subgroup $G'[p^l]\ra G'$ and the composition $G'[l] \ra G' \ra G'[p^{k-l}]$ is trivial. Furthermore, the morphism $G_n/G_n[p^l]\cong G_n[p^{k-l}]$ can also be algebraized.   

For the BT groups, any BT group $G_n$ can be represented as a colimit $\text{colim}_k G_n[p^k]$ and $G_n[p^k]$ can be algebraized to $G'[p^k]$ for each $k$. Since the algebraization preserves the morphism, $G'[k]$ is still an inductive system and $\text{colim}_k G'[p^k]$ is a BT group. Hence the BT group $G_n$ can be algebraized too. \end{proof}

\subsection{The truncated case} \label{the truncated case}
For a BT$_n$ group $G$ versally deformed over curve $C/k$, by  \ref{higher KS}, $G$ is unobstructed. But since \ref{obstruction and torsor}(2) may not be true for BT$_n$,  the lifting of $G$ may not be unique and consequently the choice of $C_m$ may not be unique if $m$ is large enough. But we still have the following weaker result: 

\begin{thm}
If $C$ is a versal deformation of a BT$_n$ group $G$, then there exists unique curve $C_n/W_n$ and BT$_n$(may not be unique) $G_n/C_n$ such that the following diagram
\[\xymatrix{
G \ar[d] \ar[r] & G_n \ar[d]\\
C \ar[r] \ar[d]& C_n \ar[d]\\
k \ar[r] & W_n
}\] 
is a fiber product. 
\end{thm}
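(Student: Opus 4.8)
The plan is to re-run the inductive construction behind Theorem \ref{main theorem} (Steps I--III) almost verbatim, tracking which inputs survive the passage from BT to BT$_n$ and dropping only the uniqueness of the lifted group. A first simplification is that Step IV is irrelevant here: since we stop at the finite level $W_n$, the curve $C_n$ and the group $G_n$ produced by the induction are already honest schemes, so no appeal to Grothendieck existence is needed.

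I would begin by recording that the two engines of the construction remain valid for BT$_n$ groups. By the remark following \ref{obstruction and torsor}, part (1) of that theorem holds for BT$_n$, so at each square-zero step $C_m \hookrightarrow C_{m+1}$ with $0 \le m < n$ the obstruction $\mathrm{ob}_{C_{m+1}}(G_m)$ to extending a chosen lift $G_m$ of $G$ lies in $H^1(C, T_C)$; and Proposition \ref{higher KS} shows that the higher-order Kodaira--Spencer bijection (\ref{bijection}) persists. Thus, starting from $G_0 = G$ over $C_0 = C$, at every stage there is a unique lift $C_{m+1}$ of $C_m$ killing the obstruction, and over it some lift $G_{m+1}$ exists by \ref{reduce to truncated}. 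This yields a tower $C_0,\dots,C_n$ with lifts $G_0,\dots,G_n$, realizing $(C,G)$ as the reduction mod $p$ of $(C_n,G_n)$ and giving the asserted fiber product.

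The one place where the BT argument does not carry over is the uniqueness of the group. In Theorem \ref{main theorem} the lift $G_{m+1}$ was unique up to a unique isomorphism because \ref{torsor of BT} and \ref{reduce to truncated 0}(2) make the deformations a torsor under $H^0(C,T_C)=0$ with trivial automorphisms. For BT$_n$, \ref{reduce to truncated 0}(2) is no longer available: a local lift can carry nontrivial automorphisms, so the gluing step in \ref{torsor of BT} may fail to identify globally what is locally isomorphic, and $G_n$ need not be unique even up to isomorphism. This is precisely the weakening recorded in the statement.

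The main obstacle is then to show that the curve $C_n$ is nonetheless unique. Comparing two valid towers and proceeding by induction, I reduce (after identifying the curves at level $m$) to the following assertion: for two lifts $G_m, G_m'$ of $G$ over the common $C_m$, the curve-lifts selected to annihilate $\mathrm{ob}_{C_{m+1}}(G_m)$ and $\mathrm{ob}_{C_{m+1}}(G_m')$ coincide. Since, by Proposition \ref{higher KS}, the obstruction varies with $C_{m+1}$ through the Kodaira--Spencer map of the special fiber alone, the difference $\Delta := \mathrm{ob}_{C_{m+1}}(G_m) - \mathrm{ob}_{C_{m+1}}(G_m')$ is independent of $C_{m+1}$; the two selected curves agree exactly when $\Delta = 0$. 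Here the hypothesis $g(C)\ge 2$, hence $H^0(C,T_C)=0$, is decisive: the surjection onto $H^0(C, t_G\otimes t_{G^*})$ constructed as in the proof of \ref{torsor of BT}---which only invokes \ref{locally unobstructed} and so is available for BT$_n$---forces every lift of $G$ over $C_m$ to be \emph{locally} isomorphic to every other, so $G_m$ and $G_m'$ differ only by an automorphism-twist. The crux, and the step I expect to be hardest, is to verify that this automorphism-twist produces no change in the curve-direction obstruction, i.e. that $\Delta = 0$; I would attack it by writing the obstruction as the \v{C}ech class $[s_{ij}]$ of \ref{KS}, expressing the change $s_{ij}' - s_{ij}$ in terms of the local isomorphisms, and showing---using that $t_G$ and $t_{G^*}$ are line bundles in the height $2$ case---that the resulting correction is a coboundary. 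Granting $\Delta=0$, the Kodaira--Spencer bijection selects a single $C_{m+1}$ at every stage, the tower is forced, and $C_n$ is unique.
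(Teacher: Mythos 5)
Your existence argument and your diagnosis of why the lifted group $G_n$ fails to be unique both match the paper. The problem is the crux you flag yourself --- uniqueness of the curve --- which you leave as a plan rather than a proof, and the plan you sketch is not the right mechanism. You propose to show that for two lifts $G_m, G_m'$ over a common $C_m$, the difference of obstruction cocycles $s_{ij}'-s_{ij}$, expressed through the local isomorphisms, is a coboundary. If that argument worked it would work at \emph{every} stage and would prove that $C_m$ is unique for all $m$; but the paper explicitly warns that for a BT$_n$ group ``the choice of $C_m$ may not be unique if $m$ is large enough,'' and the theorem is deliberately stated only up to level $W_n$. So an argument insensitive to the truncation level $n$ proves too much and must have a gap. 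The gap is exactly where you locate it: the local isomorphisms $\phi_i$ between $G_m$ and $G_m'$ carry nontrivial automorphism ambiguity, the transition automorphisms $\phi_j^{-1}\phi_i$ act nontrivially on deformation classes, and there is no reason the resulting correction is a coboundary.

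The paper's resolution is different and uses the finite truncation essentially. Two inputs are combined. First, by \ref{reduce to truncated}, the obstruction to lifting a group to the next infinitesimal neighborhood depends only on its $p$-kernel, so it suffices to compare not $G_m$ and $G_m'$ themselves but suitable truncations. Second, by \cite[Theorem 4.4(d)]{Ill} the map $\Aut(G')\to\Aut(G'[p^{n-1}])$ is trivial for any deformation $G'$; hence, although the local isomorphisms between $G_m$ and $G_m'$ (which exist because $H^0(C,t_G\otimes t_{G^*})\cong H^0(C,T_C)=0$) need not glue, their restrictions one truncation level down are \emph{unique} and therefore do glue to a global isomorphism of the truncated groups. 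Since the truncations agree globally and the obstruction only sees the $p$-kernel, both lifts select the same $C_{m+1}$. The price is the loss of one truncation level per step, so the induction runs for exactly $n$ steps --- which is why the statement is about $C_n/W_n$ and not about the whole tower. You should replace your coboundary plan with this truncation-and-gluing argument, and track the kernel level through the induction as the paper does.
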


\begin{proof}
We have known the existence from Step I, II and III. So it suffices to show the uniqueness of $C_n$. We prove it by induction. 

From \ref{KS}, $C_2$ is unique for any $n\geq 1$. By (\cite[ Theorem 4.4(d)]{Ill}), we know for any $G' \in\Def_{C_2}(G)$, the morphism 
\begin{equation}\label{trivial auto} \mathrm{Aut}(G') \ra \mathrm{Aut}(G'[p^{n-1}])\end{equation} 
is trivial. Note we have the morphism \ref{surj between torsors} is null and $H^0(C, t_G \otimes t_{G^*})\cong H^0(C, T_C)=0$. Hence for any $G', G'' \in\Def_{C_2}(G)$, $G'$ and $G''$ are locally isomorphic. Restricting  to $G'[p^{n-1}]$ and $G''[p^{n-1}]$, these local isomorphism are unique and then compatible on the overlapping. Hence we have 
\[G'[p^{n-1}] \cong G''[p^{n-1}]\] globally. By \ref{reduce to truncated}, all the deformation to $C_2$ has the same obstruction class to the second order deformation. Then by \ref{higher KS}, there exits a unique $C_3$ admits the deformation.

Assume that $C_{k-1}$ is unique with $k-2<n$ and all the deformations in $\Def_{C_{k-1}}(G)$ share the same $p^{n-k+2}-$kernel, i.e. for any $G', G'' \in\Def_{C_{k-1}}(G)$, globally
\[G'[p^{n-k+2}] \cong G''[p^{n-k+2}].\]
Then by \ref{reduce to truncated} and \ref{higher KS}, the obstruction to deform $G'[p^{n-k+2}]$ from $C_{k-1}$  gives a unique lifting $C_{k-1} \ra C_k$. And by the same argument in the $C_2$ case, the  different lifting of $G'[p^{n-k+2}]$ share the same $p-$ kernel, i.e. for any $K' ,K'' \in \Def_{C_{k}}(G)$, \[K'[p^{n-k+1}] \cong K'[p^{n-k+1}]. \] So we can keep induction until $k=n$.
\end{proof}

For higher order deformation, there may not exist a canonical choice of $C_m$. However, since the lifting of BT$_n$ is always unobstructed, using the result in Step IV,  we have 

\begin{thm}
For a BT$_n$ group $G$ versally deformed over $C$, there exists a curve $C'$ over $W(k)$ and a BT$_n$ group $G' \ra C'$ such that the following diagrams  
\[\xymatrix{
G \ar[r] \ar[d] & G' \ar[d]\\
C\ar[r] \ar[d] &C' \ar[d]\\
k \ar[r]& W
} \] are both fiber products.
\end{thm}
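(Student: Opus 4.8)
The plan is to reduce the claimed statement to the two ingredients already assembled in the excerpt: the unobstructedness of BT$_n$ deformations (Steps I--III), and the Grothendieck--EGA algebraization machinery (Step IV). The final theorem differs from \ref{main theorem} only in that $G$ is now a truncated BT$_n$ group rather than a genuine BT group, so the argument is essentially the same modulo the loss of strong uniqueness. First I would invoke the preceding theorem in \ref{the truncated case}: for the given BT$_n$ group $G$ versally deformed over $C$, there exists, for every $n\geq 1$, a curve $C_n/W_n$ together with a BT$_n$ group $(G_n)/C_n$ fitting into the required fiber-product square, by Steps I, II and III. The only caveat is that neither $C_n$ nor $G_n$ need be canonical in the higher-order range; however, existence at every finite level is all that is needed to feed into algebraization.

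Next I would assemble these finite-level data into a formal scheme. Choosing any compatible system $\{C_n \to C_{n+1}\}$ of liftings admitting liftings $\{G_n \to G_{n+1}\}$ of $G$ --- which exists because each obstruction class in $H^1(C,T_C)$ can be killed by adjusting the lift of the curve, exactly as in Step III via \ref{higher KS} --- I form $\hat C = \lim C_n \to \mathrm{Spf}(W)$ and $\hat G = \lim G_n \to \hat C$. Here the remark after \ref{main theorem} guarantees each $C_n$ is proper and smooth over $W_n$, so the formal limit $\hat C$ carries a formal ample line bundle (the one lifting an ample bundle on $C$). Then \cite[5.4.5]{EGA} algebraizes the pair $(\hat C, \scr{L})$ to a genuine projective curve $C'$ over $W$ with $C'\times_W W_n \cong C_n$.

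Having algebraized the base, I would algebraize $\hat G$ over $C'$ exactly as in the Proposition at the end of Step IV. Since $G$ is BT$_n$ (truncated), this is in fact the \emph{easier} of the two cases treated there: one applies the corollary to \ref{EGA} to algebraize each finite locally free group scheme $G_n \to C_n$ to a finite locally free $G' \to C'$, and then uses uniqueness of algebraization of closed subschemes and morphisms to algebraize the torsion filtration, producing for every $l\leq k$ the exact sequences
\[0 \to G'[p^l] \to G' \to G'[p^{k-l}] \to 0,\]
which certifies that $G'$ is again BT$_n$. Base-changing $G'\to C'$ to $W_n$ recovers $G_n \to C_n$ by the faithfulness half of \ref{EGA}, so both squares in the statement are fiber products.

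The main obstacle is genuinely conceptual rather than computational: without the strong uniqueness enjoyed by honest BT groups (which relied on \ref{reduce to truncated 0}(2) giving trivial automorphisms), I must be careful that a \emph{coherent} tower $\{C_n, G_n\}$ actually exists, i.e.\ that at each stage the chosen lift $C_{n+1}$ can be taken compatibly with the already-fixed $C_n$ and simultaneously admits a lift $G_{n+1}$ of $G_n$. This is precisely what \ref{higher KS} supplies: the Kodaira--Spencer identification makes the obstruction-killing adjustment of $C_{n+1}$ available at every level, so a compatible system can be built recursively. Because the theorem only asserts \emph{existence} of $(C',G')$ and not its uniqueness, I do not need to control the ambiguity introduced by the nontrivial torsor $\Def_{C_n}(G)$ of BT$_n$ lifts; I simply make an arbitrary compatible choice and algebraize it, and the argument goes through verbatim from Step IV.
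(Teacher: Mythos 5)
Your proposal is correct and follows essentially the same route as the paper, which proves this theorem simply by pointing at the unobstructedness established in Steps I--III (via \ref{higher KS} killing each obstruction class in $H^1(C,T_C)$ by adjusting the lift of the curve) and then feeding an arbitrarily chosen compatible formal tower into the algebraization machinery of Step IV. Your explicit attention to the loss of canonicity of $\{C_m, G_m\}$ in the truncated case, and the observation that only existence (not uniqueness) of a coherent tower is needed, accurately reflects the paper's own remarks preceding the theorem.
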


\section{Equi-characteristic case} \label{equicharacteristic}
We have an analogue to \ref{main theorem} in the equi-characteristic case.

\begin{thm} \label{rigidity}
Assumption as \ref{main theorem}, for any $n\geq 2$, the unique deformation (up to isomorphisms) of $G\ra C$ to $k[\e]/(\e^n)$ is the trivial deformation $G\times_k k[\e]/\e^n \ra C\times_k k[\e]/\e^n$, i.e. $G\ra C$ is rigid. 
\end{thm}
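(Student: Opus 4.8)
The plan is to prove the statement by induction on $n$, showing that every deformation of the pair $(C,G)$ over $k[\e]/(\e^n)$ is isomorphic to the trivial one. The argument runs parallel to the proof of \ref{main theorem}, with the Witt ring truncations $W_n$ replaced by the Artinian $k$-algebras $k[\e]/(\e^n)$. The first thing I would note is that this replacement does not disturb the obstruction-theoretic machinery of Sections \ref{1st order} and \ref{higher order deformation}: each step $k[\e]/(\e^{m-1}) \hookrightarrow k[\e]/(\e^m)$ is a square-zero thickening whose ideal $J=(\e^{m-1})/(\e^m)$ is isomorphic to $k$, and $p=0$ on all the schemes involved, so \ref{locally unobstructed}, \ref{reduce to truncated} and \ref{torsor of BT} apply verbatim with $W_m$ replaced by $k[\e]/(\e^m)$.

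For the base case $n=2$, let $(C_\e, G_\e)$ be a deformation over $k[\e]/(\e^2)$. The curve deformation $C_\e$ is classified by a class $\xi \in H^1(C,T_C)$, measured against the trivial deformation $C_0 = C \times_k k[\e]/(\e^2)$. By \ref{obstruction and torsor}(1) the obstruction $\mathrm{ob}_{C_\e}(G)$ to deforming $G$ over $C_\e$ lies in $H^1(C, t_G\otimes t_{G^*})$, which the versality hypothesis identifies with $H^1(C,T_C)$ through the Kodaira--Spencer isomorphism. Since the trivial deformation $G\times_k k[\e]/(\e^2)$ deforms $G$ over $C_0$, we have $\mathrm{ob}_{C_0}(G)=0$; and by the bijection (\ref{bijection}), whose proof rests on \ref{KS}, the obstruction differs from $\mathrm{ob}_{C_0}(G)$ by $\mathrm{ks}(\xi)$, so $\mathrm{ob}_{C_\e}(G)=\mathrm{ks}(\xi)$ up to sign. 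Because $\mathrm{ks}$ is an isomorphism, $\mathrm{ob}_{C_\e}(G)=0$ forces $\xi=0$, i.e. $C_\e=C_0$ is the trivial curve deformation. Over $C_0$ the set of deformations of $G$ is a torsor under $H^0(C, t_G\otimes t_{G^*})\cong H^0(C,T_C)$ by \ref{torsor of BT}, and this group vanishes since $g(C)\geq 2$; hence $G$ deforms uniquely over $C_0$, necessarily to the trivial deformation. This proves rigidity over $k[\e]/(\e^2)$.

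For the inductive step, given a deformation of $(C,G)$ over $k[\e]/(\e^n)$, its restriction over $k[\e]/(\e^{n-1})$ is trivial by hypothesis, so we are once again extending the trivial deformation across the square-zero thickening $k[\e]/(\e^{n-1})\hookrightarrow k[\e]/(\e^n)$ with ideal $J\cong k$. The identical argument, now invoking \ref{higher KS} in place of \ref{KS} and the higher-order form of (\ref{bijection}), shows first that the curve must deform trivially and then that $G$ deforms trivially, completing the induction.

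The step I expect to require the most care is the verification that the Kodaira--Spencer computation of \ref{KS} and \ref{higher KS} carries over to this setting, where the generator $p$ of the relevant ideal of $W_n$ is replaced by $\e^{n-1}$, the generator of $J$ in $k[\e]/(\e^n)$. The local diagrams (\ref{the diagram}), (\ref{the local deformation}) and the key identity (\ref{key equation}), $\phi^*_2=\phi^*_1+\phi^*_3$, are, however, purely statements about a square-zero extension with one-dimensional ideal and a re-gluing derivation $\d$, and make no essential use of $p\neq 0$; so the same local analysis applies with $p$ replaced by $\e^{n-1}$. Once this is granted, the global argument above is formal, driven entirely by the Kodaira--Spencer isomorphism and the vanishing $H^0(C,T_C)=0$.
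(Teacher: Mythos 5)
Your proposal is correct and follows the same global strategy as the paper: reduce to the obstruction machinery of Sections \ref{1st order} and \ref{higher order deformation}, use the Kodaira--Spencer isomorphism to force the curve deformation to be trivial, use $H^0(C,T_C)=0$ to get uniqueness of the deformation of $G$, and induct on $n$. The one place you diverge is in how the equi-characteristic analogue of \ref{KS} is verified. You propose to transplant the full mixed-characteristic local computation --- the diagrams (\ref{the diagram}), (\ref{the local deformation}) and the identity $\phi^*_2=\phi^*_1+\phi^*_3$ --- with $p$ replaced by $\e^{n-1}$; this works, and your observation that the argument only uses the structure of a square-zero extension with one-dimensional ideal is the right justification. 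The paper instead exploits a feature special to equal characteristic: the thickening $k\to k[\e]/\e^2$ is split, so the local deformation is literally $B'=B[\e]/\e^2$ with re-gluing map $l+\e\d$, and the identity $\pi\circ(l+\e\d)=f$ gives $\mathrm{ks}(\d)=[G_{U'}\otimes_{l+\e\d}A[\e]/\e^2]-[G_{U'}\otimes_l A[\e]/\e^2]$ in one line, bypassing the triple-of-deformations computation entirely. Your route is heavier but no less valid; the paper's shortcut is worth knowing since it is exactly the splitting of the trivial deformation that makes it available.
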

The proof of \ref{rigidity} largely parallels the mixed-characteristic case. We consider the equi-characteristic first-order deformation $$\text{Spec }k \stackrel{i}{\ra} \text{Spec }k[\e]/\e^2.$$ The fiber product
\[\xymatrix{
G_{2,0} \ar[r] \ar[d]& G \ar[d]\\
C_{2,0} \ar[r] \ar[d] & C \ar[d]\\
\text{Spec } k[\e]/\e^2 \ar[r] & \text{Spec }k
}\] gives the trivial deformation of the BT group since the fiber product preserves the BT structure.

Since we can carry on the proof word by word except replacing $W_2$ by $k[\e]/\e^2$, Theorem \ref{obstruction and torsor} is  true for the equi-characteristic case. And further, we can show \ref{KS} is also true in the equi-characteristic case. 

Assume $C=U\cup V$ as union of open affine schemes and $U'(\text{resp.} V')$ are the unique deformation of $U(\text{resp.}V)$ to $k[\e]/\e^2$ as in the proof of \ref{KS}. Comparing the deformation of $C$ with the trivial deformation: locally,
\[\xymatrix{
B':=B[\e]/\e^2 \ar@<2pt>[r]^{l+\e \d} \ar@<-2pt>[r]_{l} \ar[d] & A[\e]/\e^2 \ar[d] \ar[r]^{\pi} & k[\e]/\e^2 \ar[d] \\
B \ar[r]^l & A \ar[r]^{/\frak{m}_x} & k\\
}\]
where similar to \ref{the i}, $\d$ induces $f:B \ra k[\e]/\e^2 $. Then 
\[ \pi\circ (l+\e\d) =f.\]
Hence \[G_{U'} \otimes_{\pi\circ (l+\e\d)}k[\e]/\e^2=G_{U'} \otimes_f k[\e]/\e^2. \]That justifies that 
\[\rm{ks}(\d)=[G_{U'} \otimes_{l+\e\d} A[\e]/\e^2]-[G_{U'} \otimes_l A[\e]/\e^2].\]

Therefore, the Kodaira Spencer map induces an isomorphism:
\begin{align*}
\Def(C, i)\ra& H^0(C, t_G \otimes t_{G^*})\\
C_2 \mapsto& {ks}([C_2]-[C_{2,0}])
\end{align*}Hence only the trivial deformation of $C$ admits a lifting of $G$. Theorem \ref{obstruction and torsor}(2) shows such a lifting is unique. 

By induction, we can prove it for the higher order deformation $\text{Spec }k \ra \text{Spec }k[\e]/\e^n$. For any deformation $G_n \ra C_n \ra k[\e]/\e^n$, base change to $k[\e]/\e^{n-1}$, it is the trivial deformation on $\text{Spec }k[\e]/\e^{n-1}$. Then we can carry the argument in the Section \ref{higher order} word by word to the current situation. Therefore \ref{rigidity} is proved. 

\section{A height 2 BT group with trivial Kodaira Spencer map}
\label{trivial KS map}

Another extreme case is $G$ with trivial Kodaira-Spencer map, or equivalently, a rank 2 Dieudonne crystal over $C$ with trivial Higgs field. Let $\cV$ be the Dieudonne crystal $\DD(G)$ associated to $G$ with the Hodge filtration 
\[0\ra \sL \ra (\cV)_C \ra \sL' \ra 0 \]
and trivial Higgs field $\theta: \sL \ra \sL'\otimes \O^1_C$. Let $F$ and $V$ be the Frobenius and Verschiebung for $\cV$.  Then over $C$, $\ker F=\im V=\sL^p $. 

\begin{thm}
If a height 2 BT group $G$ has trivial Kodaira-Spencer map, then there exists a BT group $H$ such that $H^{(p)}=G$.  
\end{thm}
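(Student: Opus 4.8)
The plan is to pass through the Dieudonn\'e crystal functor $\DD$ and turn the statement into a Frobenius-descent problem for $\cV=\DD(G)$. Let $\f\colon C\ra C$ denote the absolute Frobenius; then for any height $2$ BT group $H$ one has $\DD(H^{(p)})=\f^*\DD(H)$ as Dieudonn\'e crystals, compatibly with $F$, $V$ and the connection $\na$. Since $\DD$ is an anti-equivalence between height $2$ BT groups over $C$ and rank $2$ Dieudonn\'e crystals, it suffices to construct a Dieudonn\'e crystal $\cW$ together with an isomorphism $\f^*\cW\cong\cV$ respecting $F$, $V$ and $\na$: putting $H:=\DD^{-1}(\cW)$ then gives $\DD(H^{(p)})=\f^*\cW\cong\cV=\DD(G)$, hence $H^{(p)}\cong G$. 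So the whole problem is to exhibit $\cV$ as a Frobenius pullback of an $F$-$V$-crystal.

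The hypothesis enters through the conjugate filtration. Because the Higgs field $\theta$ vanishes, the Hodge line $\sL$ is horizontal for $\na$, so $\sL$ and $\sL'=\cV_C/\sL$ are a sub- and a quotient crystal. Using the identity $\ker F=\im V=\sL^p$ and the relations $FV=VF=0$ on $C$ (where $p=0$), the Frobenius induces a horizontal isomorphism $\f^*\sL'\xrightarrow{\sim}\im F$ and the Verschiebung a horizontal isomorphism $\cV/\im F\xrightarrow{\sim}\f^*\sL$. Hence $\cV$ fits into a short exact sequence of crystals
\[0\ra \f^*\sL'\ra \cV\ra \f^*\sL\ra 0\]
whose two graded pieces are \emph{genuine} Frobenius pullbacks. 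This is the structure I want: the natural candidate is $\cW:=$ the extension $0\to\sL'\to\cW\to\sL\to 0$ of crystals whose class pulls back to that of $\cV$, and on the graded pieces $F$ and $V$ descend to identity maps, so that $\cW$ acquires a Frobenius and a Verschiebung with $F_\cW V_\cW=V_\cW F_\cW=p$ and a rank $1$ Hodge filtration, i.e. $\cW$ is again a height $2$ Dieudonn\'e crystal and $\f^*\cW\cong\cV$.

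The main obstacle is precisely the descent of the extension class, i.e. showing $[\cV]$ lies in the image of $\f^*\colon \mathrm{Ext}^1(\sL,\sL')\ra \mathrm{Ext}^1(\f^*\sL,\f^*\sL')$ and that $F$, $V$ descend along with it. This is \emph{not} Cartier descent: the connection on $\cV_C$ generally has nonzero $p$-curvature, so the descent datum cannot be read off from horizontal sections and must instead be produced from the $F$-crystal structure. Concretely, $F\colon\f^*\cV\ra\cV$ and $V\colon\cV\ra\f^*\cV$ are the only maps relating $\cV$ to $\f^*\cV$, and I expect to build the descent isomorphism out of them, the horizontality of $F$ tying $[\cV]$ to its own Frobenius pullback; verifying the cocycle condition is the delicate point. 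A useful organizing observation is the dichotomy for the two horizontal lines $\sL$ (Hodge) and $\im F$ (conjugate) inside $\cV$: a nonzero horizontal map of line-bundle crystals is an isomorphism, so either $\sL=\im F$ --- forcing $\sL\cong\f^*\sL'$ and $\sL'\cong\f^*\sL$ --- or $\sL\cap\im F=0$, forcing $\sL\cong\f^*\sL$ and a splitting $\cV\cong\f^*(\sL\oplus\sL')$. In the transverse case $\cW=\sL\oplus\sL'$ works outright; the nonsplit ``$\sL=\im F$'' case is where the extension class is genuine, and that is the case in which I would have to carry out the $F$-driven descent carefully. Once $\cW$ is in hand, $H=\DD^{-1}(\cW)$ is the required group.
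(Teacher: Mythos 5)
Your reduction to exhibiting $\cV=\DD(G)$ as a Frobenius pullback of a Dieudonn\'e crystal is the right target, and the observation that $F$ and $V$ identify the graded pieces of the conjugate filtration with $\f^*\sL'$ and $\f^*\sL$ is correct. But the argument stops exactly where the work begins: you never construct $\cW$; you only name the obstruction (descending the extension class together with $F$, $V$ and $\na$ along $\f$) and say you \emph{expect} to resolve it. The fallback dichotomy does not close the gap either. It rests on the claim that a nonzero horizontal map of line bundles with connection over a curve is an isomorphism, which fails in characteristic $p$: multiplication by $t^p$ is a horizontal, nonzero, non-invertible endomorphism of $(\cO_C,d)$, and in general a horizontal map may vanish along a divisor with multiplicities divisible by $p$. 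So you cannot conclude that either $\sL=\im F$ everywhere or the two lines are everywhere transverse; and the nonsplit case, which you yourself identify as the genuine one, is left entirely open. As written this is a program, not a proof.

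The paper avoids $\mathrm{Ext}$ groups and descent of extension classes altogether by an explicit integral construction, which is the idea your proposal is missing. Fix a lifting $C'$ of $C$ to $W$ and realize $\cV$ as a bundle with integrable connection $(\cV,\na)$ on $C'$, with reduction $\pi:\cV\ra\cV_C$. Since $\theta=0$, the connection preserves $\sL$ and hence the submodule $L:=\pi^{-1}(\sL)\subset\cV$. For a local lift $\s$ of Frobenius one checks $L^\s=V(\cV)$ inside $\cV^\s$ (both reduce to $\sL^p=\im V=\ker F$ and contain $p\cV^\s$), and $V(\cV)\cong\cV$ because $V$ is injective integrally; this simultaneously shows that $L$ is locally free of rank $2$ and that its Frobenius pullback is $\cV$. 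Moreover $V$ restricts to $V_L:L\ra L^\s$, and $F(L^\s)=F(V(\cV))=p\cV\subset L$ gives $F_L:L^\s\ra L$, with $F_LV_L=V_LF_L=p$. Thus $(L,\na,F_L,V_L)$ is a rank $2$ Dieudonn\'e crystal whose Frobenius pullback is $\cV$, and de Jong's equivalence produces $H$ with $H^{(p)}=G$. This $L$ is precisely the crystal $\cW$ you were trying to build; realizing it as a subsheaf of a lift of $\cV$ rather than as an abstract extension is what makes the compatibility of $F$, $V$ and $\na$ automatic and dissolves the descent problem you flagged.
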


\begin{proof}
Since $\theta=0$, the connection preserves the line bundle $\sL$.  Choose a lifting $ C'$ of $C$ to $W$. Then the crystal $\cV$ corresponds to a bundle with an integral connection. For notational simplicity, we still denote it as $(\cV, \nabla)$. Let $L=\pi^{-1}(\sL)\subset \cV$ be the inverse image of $\sL$ under $\pi: \cV \ra \cV_C$. Since $\nabla$ preserves $\sL$, $\nabla$ preserves $L$. 

For any affine open subset $U\subset  C'$, choose a lifting of the absolute Frobenius $\s$ to $U$. Then $L^\s \subset \cV^\s$. Since $\pi(L^\s)=\pi(V(\cV))$ where $V$ is the Verschiebung, we have $L^\s = V(\cV) \cong \cV$. In particular, $L^\s$ is isomorphic to a rank 2 vector bundle over $U$. It implies that $L$ can be viewed as a vector bundle over $ C'$. 

It remains to define the Frobenius $F_L$ and Verschiebung $V_L$ for $L$ locally. Over any $U$, $V: \cV^\s \ra \cV$ induces just the inclusion $L \ra L^\s\cong \cV$, which can be chosen to be $V_L$. And $F(L^\s)=F(V(\cV))=p.\cV \subset L$. Hence $F$ induces a morphism $F_L: L^\s \ra L$. Since both of $F_L$ and $V_L$ are restriction of $F$ and $V$, we have $F_L \circ V_L = V_L \circ F_L = p. \text{Id}$. 

Therefore $(L, \nabla, F_L, V_L)$ corresponds to a rank 2 Dieudonne crystal whose Frobenius pullback is isomorphic to $\cV$. By (\cite[ Main Theorem 1]{deJ}),  such a crystal corresponds to a height 2 BT group $H$ and $H^{(p)}=G$.
\end{proof}

\section{Application: lifting to a Shimura curve  } \label{application}
\subsection{Shimura curves of Mumford type}
In \cite{Mum}, Mumford defines a family of Shimura curves. We briefly recall the construction as follows. Let $F$ be a cubic totally real field and $D$ be a quaternion division algebra over $F$ such that 
\[D\otimes_\Q \R \cong \mathbb{H}\times \mathbb{H} \times M_2(\R), \rm{Cor}_{F/\Q}(D)\cong M_8(\Q).\]
Here $\mathbb{H}$ is the quaternion algebra over $\R$. 

Let $G=\{x \in D^* | x.\bar x=1\}$. Then $G$ is a $\Q-$simple algebraic group. And it is the $\Q-$form of the $\R-$algebraic group $SU(2)\times SU(2)\times SL(2,\R)$. 
\begin{align*}
h: \mathbb{S}_{m}(\R) \ra& G(\R)\\ e^{i\th} \mapsto&I_4 \otimes \begin{pmatrix} \cos \th & \sin \th \\-\sin \th & \cos \th \\ \end{pmatrix}.
\end{align*} 

The pair $(G, h)$ forms a Shimura datum. And it defines a Shimura curve, parameterizing abelian fourfolds. Generalizing the construction, one is able to define some Shimura curves of Hodge type, parameterizing $2^m$ dimensional polarized abelian varieties (see \cite{Zuo}). We call such curves (with its universal family) \textit{the Shimura curves of Mumford type}.

\begin{defn}
Define the binary operation between two BT groups: \[G\odot H:=\colim_n (G[p^n]\otimes_\Z H[p^n]).\]
\end{defn}

\begin{rmk}
The inductive system $(G[p^n]\otimes_\Z H[p^n])$ will be explained in the Appendix \ref{the dot product}. Note in general $G\odot H$ is just an abelian sheaf rather than a group scheme. But in our case, $H$ is etale and we will show in the Appendix \ref{the dot product} that $G\odot H$ is indeed a BT group and $(G\odot H)[p^n]=G[p^n]\otimes_\Z H[p^n]$.
\end{rmk}

\begin{thm} \label{lifting in general pol}
Let $X \ra C$ be a family of abelian varieties over a smooth proper curve $C$ such that
\begin{enumerate}
\item $X/C$ has the maximal Higgs field, 
\item there exists an isomorphism 
\[X[p^\infty] \cong G\odot H\] where $G$ is a height 2 BT group and $H$ is an etale BT group over $C$.
\end{enumerate}
Then $X/C$ is a good reduction of a Shimura curve of Mumford type. 
\end{thm}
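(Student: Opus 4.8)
The plan is to lift the entire family $X \ra C$ from characteristic $p$ to characteristic $0$, preserving all the Hodge-theoretic structure, and then recognize the lift as (the base change of) a Shimura curve of Mumford type via the period/monodromy characterization. I would proceed in four stages, the first of which is where \ref{main theorem} does the heavy lifting.

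\emph{Stage 1: lift the $p$-divisible group and the curve.} By hypothesis (2), $X[p^\infty] \cong G \odot H$ with $G$ a height $2$ BT group and $H$ etale. The maximal Higgs field hypothesis (1), together with the relation between the Higgs field and the Kodaira--Spencer map recorded in the Notation section, says precisely that the Kodaira--Spencer map of $G$ is an isomorphism; hence $C$ is a versal deformation of $G$. Applying \ref{main theorem} to the pair $(C,G)$ yields a unique lift $C' \ra W$ of the curve together with a lift $G' \ra C'$ of $G$, unique up to unique isomorphism. Because $H$ is etale, it lifts canonically (uniquely) along $C \hookrightarrow C'$ by the topological invariance of the etale site; combining the two lifts through the operation $\odot$ produces a lift of the whole $p$-divisible group $X[p^\infty]$ to $C'$. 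The analysis of $\odot$ in the appendix guarantees that this stays a BT group with the correct truncations.

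\emph{Stage 2: algebraize the abelian scheme.} Having lifted $X[p^\infty]$, I would invoke Serre--Tate theory over the formal scheme $\hat C = \lim C_n$ to deform the abelian scheme $X \ra C$ to a formal family $\hat X \ra \hat C$ over $\mathrm{Spf}(W)$: the deformation of an abelian scheme is controlled by the deformation of its $p$-divisible group, which we have already produced. The formal family is then algebraizable. The clean way to see this is to lift a polarization: one shows the line bundle giving the polarization on $X$ deforms compatibly up the tower (the obstructions lie in cohomology groups that vanish, or the deformation of the relevant class in $\DD$ is pinned down by the Higgs data), so that each $\hat X$ carries a compatible ample line bundle, and then Grothendieck's existence theorem (\ref{EGA} and its ample-bundle corollary, already used in Step IV) algebraizes $(\hat X, \hat{\scr L})$ to a genuine polarized abelian scheme $X' \ra C'$ over $W$.

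\emph{Stage 3: identify the lift as Mumford type.} Now $X' \ra C'$ is a family of polarized abelian varieties over a curve in characteristic $0$, and by construction its de Rham/Higgs data base-change from those of $X \ra C$, so it still has maximal Higgs field and its $p$-divisible group retains the special tensor decomposition $G' \odot H'$. The characterization I would use is Mumford's: a family of abelian varieties over a complete curve whose variation of Hodge structure has maximal Higgs field and whose endomorphism/decomposition data match the Mumford datum $(G,h)$ is, after passing to the generic fiber and analyzing its monodromy, exactly a Shimura curve of Mumford type (cf. \cite{Mum}, \cite{Zuo}). Concretely, the maximal Higgs field forces the local system to be a symmetric-power-type construction out of a rank $2$ piece, which is the defining feature of the Mumford family; the decomposition $X[p^\infty]\cong G\odot H$ is the $p$-adic shadow of this tensor structure. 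Reducing $X' \ra C'$ back modulo $p$ returns $X \ra C$, exhibiting it as a good reduction.

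\emph{Main obstacle.} The genuinely hard step is Stage 2, specifically lifting the polarization compatibly through the whole tower $\{C_n\}$ so that Grothendieck algebraization applies; \ref{main theorem} only lifts $G$ (equivalently $X[p^\infty]$) and says nothing about the polarization pairing, so one must separately check that the symplectic/alternating form on $\DD(X[p^\infty])$ deforms and descends to an ample class. The paper's own outline flags exactly this (``then we prove any polarization lifts as well, which implies that the formal family is algebraizable''), and I expect the proof to spend most of its effort verifying that the obstruction to lifting the polarization vanishes, using that $H^0(C,T_C)=0$ for $g(C)\geq 2$ together with the compatibility of the Higgs field with the Weil pairing. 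A secondary subtlety is making the recognition in Stage 3 rigorous rather than heuristic: one must pin down the monodromy of the lifted family precisely enough to match Mumford's group-theoretic datum, which is where the cited work of Möller, Zuo, and Mumford is needed.
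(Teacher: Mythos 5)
Your overall skeleton coincides with the paper's: establish versality of $(C,G)$ from the maximal Higgs field (the paper does this via an explicit computation $\DD(G\odot H)\cong \DD(G)\otimes\DD(H)$ showing the Higgs field of $X[p]$ is $\theta_{\DD(G)}\otimes\mathrm{id}$), apply \ref{main theorem} plus the \'etale lifting of $H$ and Messing/Serre--Tate to get a formal abelian scheme over $\mathrm{Spf}(W)$, lift a polarization to algebraize, and cite Zuo's Theorem 0.5 to identify the generic fiber as a Shimura curve of Mumford type. Stages 1 and 3 are therefore fine as far as they go.

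The genuine gap is Stage 2. You correctly flag the polarization lifting as the hard step, but the mechanism you sketch --- ``the obstructions lie in cohomology groups that vanish,'' invoking $H^0(C,T_C)=0$ and a compatibility of the Higgs field with the Weil pairing --- is not an argument and does not match what is actually needed. The polarization is an isogeny $\l: X\ra X^t$, and lifting it amounts to lifting a morphism of $p$-divisible groups to the \emph{already chosen} deformation $C_n$; there is no a priori reason its crystalline realization respects the lifted Hodge filtrations. The paper's solution is structural, not cohomological: it passes to a finite \'etale cover $\tilde C\ra C$ killing the rank-one monodromy of $\wedge^2\DD(G)$, so that $\DD(G)$ becomes self-dual and one obtains an isomorphism $\g:\tilde\cV^\vee\ra\tilde\cV$; it proves a rigidity lemma (any endomorphism of the filtered $F$-crystal with maximal Higgs field inducing zero on the graded pieces vanishes, so the endomorphism algebra is $k$); it uses this to force a tensor decomposition $\tilde\l^*=\g\otimes\b$ with $\b$ a morphism of the \'etale (unit-root) factor, first on $p$-torsion and then by induction on $p^n$-torsion; and it then lifts $\g$ using the strong uniqueness in \ref{main theorem} and $\b$ using \'etaleness, descends back to $C$, and checks uniqueness of the lifted polarization to get compatibility along the tower. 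Along the way it also needs the classification of subgroup schemes of $G[p]_\eta$ (via generic ordinarity and non-splitting of the connected--\'etale sequence) to see that the induced map $\tilde G^t\ra\tilde G$ is an isomorphism. None of this is recoverable from the vanishing of an obstruction group, so as written your proposal leaves the central step of the proof unestablished.
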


We devote the rest of the section to prove this theorem.  We first apply \ref{main theorem} to lift $X\ra C$ to a formal family over $W$. Secondly, to algebraize the family, we show any polarization lifts as well.

\begin{rmk}
There exist some good reductions of Shimura curves of Mumford type satisfying the conditions of $\ref{lifting in general pol}$. We will show it in our next paper. 
\end{rmk}
\subsection{Lifting of $X/C$}

 Let $\cT$ be the unit crystal $\DD(H)$ and $\cV$ be the Dieudonne crystal $\DD(G)$. 
\begin{lemma} \label{confused part}
 We have $\DD(G\odot H)=\cV \otimes \cT$ and the Hodge filtration of $\DD(X[p])$ is just 
 \[0\ra \o_{G}\otimes \cT \ra \DD(X[p])_C \ra t_{G^*}\otimes \cT \ra 0\]
\end{lemma}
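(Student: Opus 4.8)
The plan is to prove the two assertions in sequence, with the tensor-compatibility of the Dieudonn\'e functor as the crux. For the first claim, I would reduce to the truncated level: by the Appendix (see \ref{the dot product}) one has $(G\odot H)[p^n]=G[p^n]\otimes_\Z H[p^n]$, so it suffices to establish
\[\DD\big(G[p^n]\otimes_\Z H[p^n]\big)\cong \DD(G[p^n])\otimes \DD(H[p^n])=\cV_n\otimes\cT_n\]
compatibly with the transition maps in $n$, and then pass to the limit. Thus the heart of the matter is a single statement: the crystalline Dieudonn\'e functor carries the operation ``tensor with the \'etale group $H[p^n]$'' to the operation ``tensor with the unit crystal $\cT_n=\DD(H[p^n])$''.

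The natural way to prove this truncated identity is by \'etale descent, exploiting that $H$ is \'etale. \'Etale-locally on $C$ the sheaf $H[p^n]$ is constant, $H[p^n]\cong(\Z/p^n)^{\oplus r}$ with $r$ the height of $H$, and correspondingly $\cT_n$ trivializes as the free unit crystal of rank $r$. Over such a trivializing \'etale cover, $G[p^n]\otimes_\Z H[p^n]\cong G[p^n]^{\oplus r}$, so additivity of $\DD$ gives $\DD(G[p^n]\otimes_\Z H[p^n])\cong \cV_n^{\oplus r}$, which matches $\cV_n\otimes\cT_n$ on the cover. It remains to check that the two sides carry the same descent data: the gluing cocycle for $H$, valued in $\mathrm{GL}_r(\Z/p^n)$, acts on $\cV_n^{\oplus r}$ through the factor $\cT_n$ on both sides, precisely because $\cT=\DD(H)$ is by definition the unit crystal encoding the \'etale $\Z_p$-local system attached to $H$. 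Hence the isomorphism descends to $C$, and passing to the limit over $n$ yields $\DD(G\odot H)=\cV\otimes\cT$.

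For the Hodge filtration, I would use that an \'etale BT group has no cotangent: $\o_H=0$, so the Hodge filtration of $\cT$ is trivial and its evaluation $\cT_C=\DD(H)_C$ lies entirely in the quotient piece $t_{H^*}$. Since $X[p^\infty]\cong G\odot H$, the previous step gives $\DD(X[p])_C=\cV_C\otimes\cT_C$, and $\cT_C$ is a locally free $\cO_C$-module (\'etale-locally free of rank $r$), hence flat. Tensoring the Hodge filtration of $\cV=\DD(G)$,
\[0\ra \o_G\ra \cV_C\ra t_{G^*}\ra 0,\]
by $\cT_C$ therefore stays exact and produces
\[0\ra \o_G\otimes\cT\ra \DD(X[p])_C\ra t_{G^*}\otimes\cT\ra 0,\]
which identifies $\o_X=\o_G\otimes\cT$ and $t_{X^*}=t_{G^*}\otimes\cT$ as desired.

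The step I expect to be the main obstacle is the tensor-compatibility $\DD(G[p^n]\otimes_\Z H[p^n])\cong\cV_n\otimes\cT_n$, and in particular verifying that the \'etale descent data on the two sides genuinely agree. The additivity of $\DD$ and the local computation are routine, but one must make the identification of the unit crystal $\cT$ with the \'etale local system of $H$ rigorous enough that the $\mathrm{GL}_r(\Z/p^n)$-cocycle is seen to act identically on $\cV_n^{\oplus r}\cong\DD(G[p^n]\otimes H[p^n])$ and on $\cV_n\otimes\cT_n$; this is where the precise normalization of the crystalline Dieudonn\'e functor on \'etale groups must be invoked.
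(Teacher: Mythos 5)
Your proof of the first assertion is essentially the paper's: both arguments trivialize $H[p^n]$ on a finite \'etale cover $T_n\ra C$, use additivity of $\DD$ to get $\DD(G[p^n]\otimes_\Z H[p^n])\cong \cV_n^{\oplus r}\cong \cV_n\otimes\cT_n$ over the cover, and then descend by checking that the two sides carry the same descent data because $\cT=\DD(H)$ is exactly the unit crystal attached to the local system of $H$. There is no meaningful difference in that half.

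For the Hodge filtration you take a genuinely different route from the paper, and as written it has a gap. Tensoring the Hodge filtration of $\cV$ with the locally free sheaf $\cT_C$ certainly produces an exact sequence $0\ra \o_G\otimes\cT\ra \DD(X[p])_C\ra t_{G^*}\otimes\cT\ra 0$, but exactness alone does not identify the subsheaf $\o_G\otimes\cT$ with the intrinsically defined $\o_{X[p]}=e^*\O^1_{X[p]/C}$ sitting inside $\DD(X[p])_C$: many locally split subbundles of the right rank yield exact sequences, so ``which one is the Hodge filtration'' still has to be argued. The paper closes this by a characterization of the Hodge filtration in terms of Frobenius: the tensor decomposition gives $F_{\DD(X[p])}=F_{\DD(G)}\otimes F_\cT$ with $F_\cT$ an isomorphism (unit-root), hence $\ker F_{\DD(X[p])}=(\o_{G}\otimes\cT)^{(p)}$, and by de Jong's description of the Hodge filtration as (the twist of) $\ker F$ this pins down $\o_{X[p]}=\o_G\otimes\cT$. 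You could also repair it inside your own framework: over $T_1$ the identification $X[p]_{T_1}\cong G[p]^{\oplus r}_{T_1}$ is an isomorphism of group schemes, $\o$ is functorial, so $\o_{X[p]}|_{T_1}=(\o_G\otimes\cT)|_{T_1}$ as subsheaves of $\DD(X[p])_{T_1}$, and both subsheaves descend to $C$. Either way, one more idea is needed; flatness of $\cT_C$ by itself does not deliver the identification.
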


\begin{proof} Since $H$ is etale, for any $n$,  there exists a finite etale morphism $f_n: T_n \ra C$ such that $f^*_n (H[p^n])$ is trivial.

Over $T_n$, 
\[\tag{*}\DD_{T_n}(f^*_n(\bGn))\cong f^*_n(\cV/p^n)^{\oplus \Ht(H)}\cong f^*_n \cV/p^n \otimes_{\cO_C} f^*_n \cT/p^n\] as Dieudonne crystals.
Both sides are effective descent datum with respect to $T_n \ra C$. For any $g\in \mathrm{Aut}(T_n/C)$, $g^*$ acts on both of $f^*_n(\bGn)$ and $f^*_n \cV/p^n \otimes_{\cO_C} f^*_n \cT/p^n$ which is compatible with the functor $\DD_{T_n}$: 
\[\xymatrix{
f^*(\bGn)\ar[r]^{g^*} \ar[d]^{\DD_{C'}}& f^*(\bGn) \ar[d]^{\DD_{C'}} \\
f^*(\cV \otimes \cT/p^n) \ar[r]^{g^*} & f^*(\cV \otimes \cT/p^n)
}
\] is commutative. Hence the isomorphism (*) between effective descent datum also descends to $C$. 

Then we have for any $n,$
\[\DD(\bGn)=(\cV \otimes \cT/p^n, F_\cV \otimes F_\cT, V_\cV \otimes F^{-1}_\cT).\] 

 So $\DD(G\odot H)=\cV \otimes \cT$ and in particular, we have a tensor decomposition of Frobenius: 
\[F_{\DD(X[p])}= F_{\DD(G)} \otimes F_\cT.\] Note $\ker F_{\DD(G)}=\o_{G[p]}$ and $F_\cT$ is isomorphic. Hence $\ker (F_{\DD(G)} \otimes F_\cT)=(\o_{G[p]}\otimes \cT)^{(p)}. $ By (\cite[2.5.2]{deJ}), the Hodge filtration is just given by $\o_{G}\otimes \cT \ra \DD(X[p])_C$. 
\end{proof}

\begin{lemma} \label{verdeform}
The curve $C$ is a versal deformation of $G$.
\end{lemma}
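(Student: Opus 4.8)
The plan is to reduce the versality of $C$ with respect to $G$ to the maximality of the Higgs field of $X/C$, which is hypothesis (1). By the criterion recalled in the introduction (and \cite[A 2.3.6]{Ill}), $C$ is a versal deformation of $G$ precisely when the Higgs field $\theta_G\colon \o_G \ra t_{G^*}\otimes \O^1_C$ is an isomorphism, equivalently when $\mathrm{ks}\colon T_C \ra t_G\otimes t_{G^*}$ is an isomorphism. So the whole task is to show that the maximality of the Higgs field of $X$ propagates to $\theta_G$.

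First I would feed in Lemma \ref{confused part}, which supplies the decomposition $\DD(X[p^\infty]) = \cV\otimes \cT$ (with $\cV = \DD(G)$, $\cT = \DD(H)$) together with the Hodge filtration
\[0 \ra \o_G\otimes \cT \ra \DD(X[p])_C \ra t_{G^*}\otimes \cT \ra 0,\]
so that $\o_X = \o_G\otimes \cT$ and $t_{X^*} = t_{G^*}\otimes \cT$. Since $H$ is etale, $\cT$ is a unit-root crystal with $\o_H = 0$; in particular $\cT$ carries no proper Hodge step, and the filtration above is induced entirely from the $\cV$-factor. The connection underlying the tensor product of crystals obeys the Leibniz rule, $\nabla_X = \nabla_\cV\otimes \mathrm{id}_\cT + \mathrm{id}_\cV\otimes \nabla_\cT$, and I would use this to compute the associated graded map.

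The key step is to identify $\theta_X$, the graded piece of $\nabla_X$ for the filtration above. For a local section $s\otimes t$ of $\o_G\otimes \cT$, Leibniz gives $\nabla_X(s\otimes t) = (\nabla_\cV s)\otimes t + s\otimes(\nabla_\cT t)$. The second summand lies in $\o_G\otimes \cT\otimes \O^1_C$, hence maps to $0$ in the quotient $t_{G^*}\otimes \cT\otimes \O^1_C$ because $s\in\o_G$ dies in $t_{G^*}$; the first summand projects to $\theta_G(s)\otimes t$. Thus
\[\theta_X = \theta_G\otimes \mathrm{id}_\cT\colon \o_G\otimes \cT \ra t_{G^*}\otimes \cT\otimes \O^1_C.\]

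Finally, since $\cT$ is locally free of positive rank $\Ht(H)$, trivializing it on an affine open identifies $\theta_X$ with a direct sum of copies of $\theta_G$ (all of $\o_G, t_{G^*}, \O^1_C$ having rank $1$); hence $\theta_X$ is an isomorphism if and only if $\theta_G$ is. By hypothesis $X/C$ has maximal Higgs field, so $\theta_X$ is an isomorphism, and therefore $\theta_G$ is an isomorphism, i.e. $\mathrm{ks}$ is an isomorphism, which is exactly the versality criterion. I expect the only delicate point to be the clean identification of the graded connection as $\theta_G\otimes \mathrm{id}_\cT$; once the tensor decomposition of Lemma \ref{confused part} and the etaleness of $H$ are in hand, the remaining verification that an isomorphism after $\otimes\,\mathrm{id}_\cT$ descends to $\theta_G$ is routine.
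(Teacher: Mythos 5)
Your proposal is correct and follows essentially the same route as the paper: both invoke Lemma \ref{confused part} to identify the Hodge filtration of $\DD(X[p])$ as the one induced from the $\cV$-factor tensored with $\cT$, deduce $\theta_X = \theta_G\otimes\mathrm{id}_\cT$, and conclude via \cite[A 2.3.6]{Ill}. Your Leibniz-rule computation just makes explicit the step the paper states as ``the Higgs field on $X[p]$ actually comes from the Higgs field on $G$.''
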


\begin{proof}
Consider the Hodge filtration \[0\ra \o_{X[p]} \xa{i} \DD(X[p])_C \ra t_{X[p]} \ra 0.\] 
By \ref{confused part}, we have $\o_{X[p]}\cong \o_{G} \otimes \cT$ and $i$ induces an isomorphism on $\cT$. Hence the Higgs field $\theta_{\DD(X[p])}$ on $X[p]$ actually comes from the Higgs field $\theta_{\DD(G)}$ on $G$:
\[\theta_{\DD(X[p])} \cong \theta_{\DD(G)} \otimes \mathrm{id}.\]
Hence the maximal Higgs field descends to $G/C$.  By (\cite[A 2.3.6]{Ill}), $C$ is a versal deformation of $G$. 
\end{proof}

\begin{rmk} \label{fix the lifting}
By \ref{main theorem}, $C$ has a unique lifting $C'$ to $W$ such that $C'$ admits a lifting of $G$. We fix this lifting once for all and take 
\[C \ra C_n \] as the corresponding lifting to $W_n$ .
\end{rmk}

\begin{prop} \label{formal family}
The family $X \ra C$ can be lifted to a formal abelian scheme $\{X_n \ra C_n\}$ over $W$.
\end{prop}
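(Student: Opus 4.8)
The plan is to lift the family $X \to C$ one infinitesimal step at a time, using \ref{main theorem} to control the $p$-divisible group $G$ and then packaging the etale factor $H$ and the tensor decomposition of Lemma \ref{confused part} to recover $X[p^\infty]$ at each level. Concretely, I would proceed by induction on $n$: assume we have a formal abelian scheme $X_n \to C_n$ lifting $X \to C$ over $W_n$, together with an isomorphism $X_n[p^\infty] \cong G_n \odot H_n$ matching the one on the special fibre, where $G_n, H_n$ are the (unique, by \ref{uniqueness for BT} and the etale rigidity) lifts of $G, H$ to $C_n$. The base case $n=0$ is the hypothesis of the theorem. The curve lifting $C_n$ is the canonical one fixed in \ref{fix the lifting}, so there is no freedom to choose here; the content is entirely in lifting the abelian scheme.

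The key step is to produce $X_{n+1}\to C_{n+1}$. First I would lift the $p$-divisible group: by \ref{main theorem} (and \ref{uniqueness for BT} in the BT case) the height-$2$ BT group $G_n$ lifts uniquely to $G_{n+1}$ over $C_{n+1}$, and since $H$ is etale it lifts uniquely and rigidly to an etale BT group $H_{n+1}$. Forming $G_{n+1}\odot H_{n+1}$ gives a BT group over $C_{n+1}$ lifting $X_n[p^\infty]$. The point is now to \emph{algebraize} this infinitesimal deformation of the \emph{abelian scheme}, not just its $p$-divisible group. For this I would invoke Grothendieck--Messing theory: a lift of the abelian scheme $X_n$ across the nilpotent thickening $C_n \hookrightarrow C_{n+1}$ (with its canonical divided powers, since $p$ is nilpotent) is equivalent to a lift of the Hodge filtration inside the Dieudonne crystal $\DD(X_n[p^\infty])$ evaluated on $C_{n+1}$. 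But Lemma \ref{confused part} identifies that crystal with $\cV\otimes\cT$ and its Hodge filtration with $\o_{G}\otimes\cT \subset \DD(X[p])_C$, and the lift $G_{n+1}$ supplies precisely a lift of the sub-bundle $\o_{G_{n+1}}$ inside $\DD(G_{n+1})_{C_{n+1}}$; tensoring with the (uniquely lifted) unit crystal $\cT$ over $C_{n+1}$ produces the required lifted filtration. Grothendieck--Messing then yields a lift $X_{n+1}\to C_{n+1}$ of the abelian scheme, whose induced $p$-divisible group is canonically $G_{n+1}\odot H_{n+1}$. Taking the limit over $n$ gives the desired formal abelian scheme $\{X_n \to C_n\}$ over $\mathrm{Spf}(W)$.

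The main obstacle I anticipate is the compatibility between the two lifting mechanisms: \ref{main theorem} lifts the BT group $G$ abstractly, whereas Grothendieck--Messing needs the lift of $G$ to be \emph{realized as} a lift of the Hodge filtration inside the evaluated crystal in a way that is functorial in $C_{n+1}$ and compatible with the $\odot H$ and tensor-by-$\cT$ operations. One must check that the canonical divided-power structure on $C_n\hookrightarrow C_{n+1}$ matches the crystalline site over which $\DD$ is computed, that tensoring the filtration with the etale crystal $\cT$ lands in the Hodge-filtration locus for $X[p^\infty]$ (this is exactly the content of the filtration identification in Lemma \ref{confused part}), and that the Frobenius/Verschiebung structures are preserved so that the output is genuinely the $p$-divisible group of an abelian scheme rather than merely a formal group. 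A secondary, more routine point is verifying that each $X_{n+1}$ remains a \emph{polarizable} abelian scheme (needed so that the later algebraization step has an ample line bundle to work with), though strictly that concern belongs to the subsequent step on lifting polarizations rather than to the existence of the formal family asserted here.
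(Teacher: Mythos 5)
Your proposal is correct and follows essentially the same route as the paper: lift the $p$-divisible group $X[p^\infty]\cong G\odot H$ by combining \ref{main theorem} for $G$ with the rigidity of the etale factor $H$, then invoke Messing's theory to convert the lift of the $p$-divisible group into a lift of the abelian scheme at each level $C_n$. The paper cites the Serre--Tate equivalence from \cite{Messing} directly rather than unpacking it through the Grothendieck--Messing Hodge-filtration description as you do, but this is the same machinery and your extra compatibility checks are just an expansion of that citation.
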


\begin{proof}
Since $H$ is etale, it automatically lifts to $W(k)$. Combining with \ref{main theorem}, $X[p^\infty]$ can be uniquely lifted to $C_n$ .  By (\cite{Messing}),we have a formal abelian scheme $\{X_n \ra C_n\}$. 
\end{proof}

\subsection{Lifting of polarization} \label{lifting of polarization}
Choose any polarization $\l: X \ra X^t$ over $C$ and fix it throughout the rest of the paper.  Once we can lift the polarization $\l$, the formal scheme in \ref{formal family} can be algebraized. Since $\l$ is an isogeny,  it suffices to lift the sub group scheme $\ker \l$. We can always assume $\l_1: X[p] \ra X^t[p]$ induced by $\l$ is not zero. Otherwise, $X[p] \subset \ker \l$ and since $X[p]$ always lifts, we can replace $\l$ by $\frac{1}{p} \l$. 
 
By \ref{verdeform}, the curve $C$ is a versal deformation of $G$. For any closed point $c$, $\hat C_c$ is the versal deformation space $\Def(G_c)$. Theorem 3.2 in \cite{Oort} indicates that the supersingular loci in $\Def(G_c)$ has only finite points. Therefore $G$ is generically ordinary over $C$.  

Since $X[p^\infty]\cong G \odot H $, 
\[X^t[p^\infty]\cong G^t \odot H^t\]
 where $H^t$ corresponds to the dual Galois representation of $H$. 

Let $\eta$ be the generic point of $C$ and $h$ be the height of $H$. Let $\cV$ be the Dieudonne crystal $\DD(G)$ and $\cT$ be the unit root crystal $\DD(H)$. Then $\cV_C$ has Hodge filtration
\[0\ra \sL_1 \ra \cV_C \ra \sL_2 \ra 0.\]

\begin{lemma} \label{d=0}
If a morphism $d: \cV_C\ra \cV^\vee_C$ preserves Hodge filtration, conjugate filtration and Higgs field while induces a zero morphism in $\Hom(\sL_2, \sL^\vee_1)$, then $d=0$.
\end{lemma}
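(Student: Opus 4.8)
The plan is to exploit that $\cV_C$ and $\cV^\vee_C$ are rank $2$ bundles on the integral curve $C$ carrying three mutually transverse pieces of linear data, and to show that the three compatibilities together with the vanishing in $\Hom(\sL_2,\sL_1^\vee)$ pin $d$ down completely; since the source is a vector bundle on an integral curve, it will suffice to prove $d=0$ at the generic point. First I would record the Hodge filtration of the target: dualizing $0\to\sL_1\to\cV_C\to\sL_2\to 0$ gives $0\to\sL_2^\vee\to\cV^\vee_C\to\sL_1^\vee\to 0$, so that preserving the Hodge filtration means $d(\sL_1)\subset\sL_2^\vee$, and $d$ induces on the subquotients a pair $\bigl(a\colon\sL_1\to\sL_2^\vee,\ b\colon\sL_2\to\sL_1^\vee\bigr)$. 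The hypothesis on $\Hom(\sL_2,\sL_1^\vee)$ is precisely that $b=0$.

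From $d(\sL_1)\subset\sL_2^\vee=\ker(\cV^\vee_C\to\sL_1^\vee)$ together with $b=0$, the composite $\cV_C\xrightarrow{d}\cV^\vee_C\twoheadrightarrow\sL_1^\vee$ kills both $\sL_1$ and the quotient $\sL_2$, hence vanishes; therefore $\im d\subset\sL_2^\vee$. Next I would use the Higgs compatibility: since $d$ respects the Higgs field, the square relating $\theta\colon\sL_1\to\sL_2\otimes\O^1_C$ and the dual Higgs field $\theta^\vee\colon\sL_2^\vee\to\sL_1^\vee\otimes\O^1_C$ through $(a,b)$ commutes, so $\theta^\vee\circ a=(b\otimes\mathrm{id})\circ\theta=0$. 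By \ref{verdeform} the group $G$ has maximal Higgs field, whence $\theta^\vee$ is a nonzero morphism of line bundles, hence injective as a map of sheaves; thus $a=0$, i.e. $d(\sL_1)=0$ and $d$ factors through a map $\bar d\colon\sL_2\to\sL_2^\vee$.

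Finally I would bring in the conjugate filtration. Write $N\subset\cV_C$ and $N'\subset\cV^\vee_C$ for the conjugate subbundles; preservation of the conjugate filtration gives $d(N)\subset N'$, while $\im d\subset\sL_2^\vee$ from the first step. Because $G$ is generically ordinary (as recorded just before the lemma), the Hodge and conjugate filtrations are transverse at the generic point $\eta$, so $N\to\sL_2$ is an isomorphism at $\eta$ and $N'\cap\sL_2^\vee=0$ at $\eta$. Hence $d(N)\subset N'\cap\sL_2^\vee$ vanishes at $\eta$, which forces $\bar d$ to vanish at $\eta$ and therefore identically; so $d=0$.

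The main obstacle I anticipate is this last step: making the transversality of the Hodge and conjugate filtrations precise at $\eta$ and identifying the conjugate subbundle of $\cV^\vee_C$ correctly, including the Frobenius twists, since this is where generic ordinariness genuinely enters and where a duality- or sign-bookkeeping slip is easiest. The first two steps are essentially a diagram chase on the dual short exact sequences plus the injectivity of a nonzero map of line bundles, and should go through routinely once the dual Hodge filtration and the maximality of $\theta^\vee$ are in place.
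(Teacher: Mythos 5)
Your proof is correct and follows essentially the same route as the paper's: where you invoke preservation of the conjugate filtration plus transversality at the generic (ordinary) point, the paper phrases the identical mechanism as the Frobenius square $d\circ F=F^\vee\circ d^\s$, getting $F^\vee\circ d^\s=0$ because $\im d^\s\subset \sL_2^{\vee,\s}=\ker F^\vee$ while $d\circ F$ factors through the generically surjective map $\sL_2^\s\to\sL_2$. Your explicit derivation of $a=0$ from the Higgs compatibility and the maximality of $\theta$ is a welcome clarification of a step the paper leaves implicit when it asserts that $d$ factors through $\sL_2\to\sL_2^\vee$.
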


\begin{proof}
 Consider the following diagram
\[\xymatrix{
\cV^\s_C \ar[r]^F \ar[d]^{d^\s} & \cV_C \ar[d]^d\\
\cV^{\vee,\s}_C \ar[r]^{F^\vee} & \cV^\vee_C .\\
}\] Note this diagram is commutative since $d$ commutes with $F$.

On one hand, since $d$ preserves Hodge filtration and induces zero on cokernels, $d$ factors through $\sL_2 \ra \sL^\vee_2$. And $F^\vee: \cV^{\vee, \s}_C \ra \cV^\vee_C$ factors through $\sL^{\vee, \s}_1$.  Therefore $F^\vee \circ d^\s=0.$ 

On the other hand, $F$ induces $\sL^\s_2 \ra \cV_C$. Since $\cV_C$ has maximal Higgs field, we have 
\[\xymatrix{
&\sL^\s_2 \ar[d] \ar[dr]&\\
\sL_1 \ar[r]&\cV_C \ar[d] \ar[r] & \sL_2\\
&\sL^\s_1& 
}\] the induced map $\sL^\s_2 \ra \sL_2$ is generically surjective. In order for $d\circ F=0$, we must have $d=0$ since $d$ factors through $\sL_2$. 
\end{proof}

\begin{cor} \label{k}
Over any finite etale covering $T$ over $C$,
\[\End(\cV_T, \sL_{1\,T}\hookrightarrow \cV_T, \sL^\s_{2\,T} \hookrightarrow \cV_T, \theta_T)=k.\]
\end{cor}

\begin{proof}
For any $\phi \in \End(\cV_T, \sL_{1\,T}\hookrightarrow \cV_T, \sL^\s_{2\,T} \hookrightarrow \cV_T, \theta_T)$, $\phi$ induces a map $s\in \End(\sL_{1\,T})=k$. Since $\phi$ preserves $\theta$ and $\theta: \sL_1 \ra \sL_2 \otimes \O^1_C$ is isomorphic, the map induced by $\phi$ in $\End(\sL_2)$ is also $s$.

Consider $\phi-s.$Id. It preserves Hodge filtration, conjugate filtration and Higgs field while induces a zero morphism in $\End(\sL_1)$. Since $T\ra C$ is etale, the pull back Higgs field $\theta_T$ is still maximal. Therefore the conclusion in \ref{d=0} is still true if we consider $\cV_T$ instead of $\cV_C$. And hence $\phi=s. \text{Id}$. 
\end{proof}
\begin{lemma} \label{nonsplitting}
The sequence associated to the ordinary group scheme 
\[0\ra G^\mult[p]_{\eta} \ra G[p]_\eta \ra G^\et[p]_{\eta} \ra 0\]
does not split.
\end{lemma}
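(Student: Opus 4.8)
The plan is to argue by contradiction. Suppose the connected--\'etale sequence
\[0\ra G^\mult[p]_\eta \ra G[p]_\eta \ra G^\et[p]_\eta \ra 0\]
admits a splitting, i.e. a section $s_\eta$ of the projection onto $G^\et[p]_\eta$ defined over the residue field $K=k(C)$ of $\eta$. Since $G$ is generically ordinary over $C$, there is a dense open $U\subset C$ over which $G$ is ordinary and the connected--\'etale sequence of $G[p]$ is an exact sequence of finite locally free group schemes. As $s_\eta$ is a morphism of finite type $U$-schemes specified at the generic point, it spreads out to a section $s$ over some dense open $U'\subseteq U$; after shrinking $U'$, the map $s$ is a homomorphism and a section. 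Thus over $U'$ we obtain a direct sum decomposition $G[p]_{U'}\cong G^\mult[p]_{U'}\oplus G^\et[p]_{U'}$, equivalently a nontrivial idempotent $e$ of $G[p]_{U'}$.

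Next I would apply the Dieudonn\'e crystal functor $\DD$. By functoriality $e$ becomes an idempotent endomorphism of $\cV=\DD(G[p])$ over $U'$, and because $\DD$ is a functor of crystals, this idempotent is horizontal, i.e. commutes with the connection $\na$. Hence $\cV_{U'}$ splits as a direct sum of two $\na$-stable rank $1$ subbundles, namely $\DD(G^\mult[p])$ and $\DD(G^\et[p])$. I then identify the Hodge line. Since the invariant differentials of an \'etale group scheme vanish and the cotangent space of a height $1$ multiplicative group is its whole (rank $1$) Dieudonn\'e module, one has
\[\o_G=\o_{G^\mult[p]}\oplus\o_{G^\et[p]}=\o_{G^\mult[p]},\]
which is exactly one of the two summands. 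In other words the Hodge filtration $\sL_1=\o_G$ coincides with a $\na$-stable summand of $\cV_{U'}$.

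Finally, since $\na$ preserves $\sL_1$ over $U'$, the induced Higgs field
\[\theta\colon \sL_1 \hookrightarrow \cV_C \xrightarrow{\na} \cV_C\otimes\Omega^1_C \ra \sL_2\otimes\Omega^1_C\]
vanishes on $U'$, so in particular $\theta_\eta=0$. This contradicts the hypothesis that $C$ is a versal deformation of $G$ (\ref{verdeform}), which by \cite[A 2.3.6]{Ill} means precisely that $\theta$ is an isomorphism, hence nonzero at $\eta$; it is also consistent with the rigidity recorded in \ref{k}. I expect the two compatibilities in the middle step to be the only delicate points: that a splitting over the imperfect generic field $K$ genuinely spreads out to a splitting of group schemes over a dense open, and that the resulting decomposition of the Dieudonn\'e crystal is horizontal for $\na$. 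Granting these, the implication \emph{split connected--\'etale sequence $\Rightarrow$ horizontal Hodge line $\Rightarrow$ vanishing Higgs field} is immediate, and the contradiction with maximality follows at once.
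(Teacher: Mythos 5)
Your proposal is correct and follows essentially the same route as the paper: a splitting forces the Dieudonn\'e module to decompose compatibly with the Hodge filtration, so the Gauss--Manin connection preserves the Hodge line and the Higgs field vanishes, contradicting maximality. The only difference is that you spell out the spreading-out from $\eta$ to a dense open and the horizontality of the resulting idempotent, points the paper leaves implicit.
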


\begin{proof}
If the sequence splits, then $\DD(G[p]) \cong \DD(G^\mult[p]_{\eta}) \times \DD(G^\et[p]_{\eta})$. So the Hodge filtration of $\DD(G[p]_\eta)$ is the direct sum of that of $\DD(G^\et[p]_{\eta})$ and $\DD(G^\mult[p]_{\eta})$. Therefore  
\[0 \ra \DD(G^\et[p]_{\eta}) \ra \DD(G[p]_\eta) \ra \DD(G^\mult[p]_{\eta})\ra 0\] is the Hodge filtration. Since Gauss-Manin connection preserves this filtration, it contradicts to the maximal Higgs field. 
\end{proof}

\begin{lemma} \label{subgroup}
For any subgroup scheme $K \subset G[p]_\eta$, $K $ can only be $0, G[p]_\eta$ or $G^{\mult}[p]_{\eta}$.
\end{lemma}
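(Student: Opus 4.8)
The statement classifies the subgroup schemes of the fiber $G[p]_\eta$ of a height 2 BT group at the generic point, under the standing assumption that $G$ is generically ordinary with maximal Higgs field. I would prove this by translating the problem into Dieudonn\'e theory, where subgroup schemes of $G[p]_\eta$ correspond to Dieudonn\'e submodules (i.e. $k(\eta)$-subspaces of $\DD(G[p]_\eta)$ stable under both Frobenius $F$ and Verschiebung $V$), and then simply enumerate all such submodules of a 2-dimensional Dieudonn\'e module. Since $G$ is ordinary at $\eta$, the group $G[p]_\eta$ sits in the connected-\'etale sequence $0 \to G^{\mult}[p]_\eta \to G[p]_\eta \to G^{\et}[p]_\eta \to 0$ already recorded above, so its Dieudonn\'e module $M = \DD(G[p]_\eta)$ is 2-dimensional with a one-dimensional $F$-stable piece (the multiplicative part, on which $V$ is an isomorphism and $F$ is zero) and the \'etale quotient (on which $F$ is an isomorphism and $V$ is zero).

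\textbf{Key steps.} First I would pass through the Dieudonn\'e correspondence to reduce the classification of subgroup schemes $K \subset G[p]_\eta$ to the classification of sub-Dieudonn\'e-modules $N \subset M$. Second, I would record the action of $F$ and $V$ on $M$ dictated by ordinarity: writing $M_{\mult} = \DD(G^{\mult}[p]_\eta)$ and $M_{\et} = \DD(G^{\et}[p]_\eta)$ for the two one-dimensional pieces, we have $\ker F = M_{\mult}$ and $\im F = M_{\et}$ (and dually for $V$), because on the multiplicative part $F$ vanishes while $V$ is bijective, and on the \'etale part $F$ is bijective while $V$ vanishes. Third, I would list the candidate submodules by dimension: the only $0$-dimensional submodule is $0$; the only $2$-dimensional one is $M$ itself; so the content is entirely in the $1$-dimensional submodules. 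A $1$-dimensional $N \subset M$ is an $F,V$-stable line, and I would argue that the only such line is $M_{\mult}$: any other line $N$ would project isomorphically onto the \'etale quotient $M_{\et}$, but then $F$-stability of $N$ would force $N$ to meet $\im F = M_{\et}$ while $V$-stability would force $N \subset \ker V = M_{\et}$ under the dual picture, and these constraints combined with the non-splitting from Lemma \ref{nonsplitting} eliminate every line except $M_{\mult}$. Translating back, the submodules $0$, $M_{\mult}$, $M$ correspond exactly to the subgroup schemes $0$, $G^{\mult}[p]_\eta$, $G[p]_\eta$.

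\textbf{The main obstacle.} The crux is ruling out the existence of a second $F,V$-stable line distinct from $M_{\mult}$; equivalently, showing the connected-\'etale sequence cannot be split by an $F,V$-equivariant section. This is precisely where Lemma \ref{nonsplitting} does the essential work: if such a line $N \ne M_{\mult}$ existed, it would give a Dieudonn\'e-module splitting $M \cong M_{\mult} \oplus N$ with $N \cong M_{\et}$, hence a splitting of the connected-\'etale sequence, contradicting the non-splitting established via the maximal Higgs field. I expect the only subtlety is bookkeeping the $F$- and $V$-stability conditions carefully enough to see that an $F$-stable, $V$-stable line mapping isomorphically to $M_{\et}$ is the same datum as an equivariant splitting; once that identification is made, Lemma \ref{nonsplitting} closes the argument immediately. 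A secondary point to verify is that the classification is insensitive to working over the (non-perfect, non-algebraically-closed) residue field $k(\eta)$ rather than over $k$, but since we only use the $F,V$-module structure and the already-established ordinarity and non-splitting at $\eta$, the enumeration of lines goes through over the generic fiber without change.
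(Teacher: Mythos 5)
Your proposal is correct and its skeleton matches the paper's: enumerate $K$ by order, observe that a nonzero proper $K$ has order $p$, identify $K=G^{\mult}[p]_\eta$ when $K$ meets the multiplicative part, and in the remaining case produce a splitting of the connected--\'etale sequence that contradicts Lemma \ref{nonsplitting}. The difference is purely one of language: you route everything through Dieudonn\'e theory and classify $F,V$-stable lines in a $2$-dimensional module, whereas the paper stays entirely at the level of group schemes --- it notes that if $K\cap G^{\mult}[p]_\eta$ is trivial then the composite $K\hookrightarrow G[p]_\eta\xrightarrow{F} G^{\et}[p]_\eta$ is injective, hence an isomorphism by comparing heights, which is exactly a splitting. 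The group-scheme version is cheaper: it needs no Dieudonn\'e correspondence at the generic point, so it sidesteps the two bookkeeping issues you flag yourself, namely the variance of the functor (in the contravariant theory, subgroup schemes correspond to \emph{quotient} modules, so ``$M_{\mult}$ and $M_{\et}$ as submodules'' needs to be set up with care --- it is $\DD(G^{\et}[p]_\eta)$ that sits inside $M$, not $\DD(G^{\mult}[p]_\eta)$) and the fact that $k(\eta)$ is not perfect, where classical Dieudonn\'e theory does not apply verbatim. Neither issue is fatal to your argument, but both would have to be addressed to make it rigorous, and the paper's direct height-and-order count makes them unnecessary.
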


\begin{proof}
Since $G[p]_\eta$ has height 2, any nonzero proper subgroup scheme $K$ can only have height 1 and order $p$. Note both of $G^\mult[p]_{ \eta}$ and $G^\et[p]_{\eta}$ have order $p$ and height 1.

Consider the diagram

\[\xymatrix{
&K \ar@{^{(}->}[d]^i \ar[dr]& \\
{G^\mult[p]_{ \eta}} \ar[r] &{G[p]_\eta} \ar[r]^F & {G^\et[p]_{ \eta}}.
}\]
If $K \cap G^\mult[p]_{\eta}$ is not empty, then $G^\mult[p]_{ \eta} \subset K$. Since $K$ has height 1, $G^\mult[p]_{\eta} = K$.

If $K \cap G^\mult[p]_{\eta}$ is empty, then $F \circ i$ is an injection. Comparing the height, it is further an isomorphism. But it contradicts to \ref{nonsplitting}. 
\end{proof}

Again as in the proof of \ref{k}, for any finite etale covering $T \ra C$, the pull back Higgs field $\theta_T$ is still maximal. Therefore we can carry the proof of \ref{nonsplitting} and \ref{subgroup} on $T$. And we have the following corollary. 
\begin{cor}\label{subgroup over etale}
For any finite etale covering $T$ of $C$, we use $G_T$ to denote the pull back of $G$ to $T$ and hence for any subgroup scheme $K \subset G_T[p]_\eta$, $K $ can only be $0, G_T[p]_\eta$ or $G^{\mult}_T[p]_{\eta}$.
\end{cor}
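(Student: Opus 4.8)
The plan is to reduce to the already-established Lemmas \ref{nonsplitting} and \ref{subgroup} by checking that their proofs survive finite etale base change. The only geometric input those proofs use is the maximality of the Higgs field, so the first step is to record that this property is preserved under pullback along $f\colon T \ra C$. Since $f$ is finite etale, pullback of the Dieudonn\'e crystal $\cV=\DD(G)$ commutes with the formation of the Hodge filtration and with the connection, hence with the induced Higgs field; as $f^*$ carries isomorphisms to isomorphisms, the pulled-back field $\theta_T$ is again an isomorphism. This is exactly the observation already invoked in the proof of \ref{k}. Moreover ordinariness is preserved by etale base change, so $G_T$ is generically ordinary over $T$ and the sequence
\[0 \ra G^\mult_T[p]_\eta \ra G_T[p]_\eta \ra G^\et_T[p]_\eta \ra 0\]
makes sense over the generic fibre of $T$.

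With these facts in hand the proof of Lemma \ref{nonsplitting} transcribes verbatim with $G_T$ in place of $G$: a splitting would force the Hodge filtration of $\DD(G_T[p]_\eta)$ to be the direct sum of the filtrations of the etale and multiplicative pieces, and such a direct-sum filtration is preserved by the Gauss--Manin connection, contradicting the maximality of $\theta_T$. Having re-established nonsplitting over $T$, I would then rerun the proof of Lemma \ref{subgroup}: any nonzero proper subgroup scheme $K\subset G_T[p]_\eta$ has height $1$ and order $p$, and intersecting with $G^\mult_T[p]_\eta$ and using the Frobenius $F\colon G_T[p]_\eta \ra G^\et_T[p]_\eta$ one finds either $K=G^\mult_T[p]_\eta$, or that $F\circ i$ is an isomorphism --- which is ruled out by the nonsplitting just proved over $T$. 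This exhausts the possibilities $0$, $G_T[p]_\eta$, $G^\mult_T[p]_\eta$.

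The only point requiring genuine care --- and the step I expect to be the main obstacle --- is the base-change compatibility asserted in the first paragraph. One must verify that the structures attached to $G$ (the Hodge filtration, the connection, the Frobenius and Verschiebung, and hence the multiplicative--etale decomposition at the generic point) are honestly compatible with $f^*$ for finite etale $f$, rather than merely formally so; in particular that $\theta_T$ computed after pullback agrees with the pullback of $\theta$, which is what keeps it an isomorphism. Once this compatibility is in place, the two lemmas apply with $T$ in the role of $C$ and no further argument is needed, so I would keep the written proof short by citing the proofs of \ref{nonsplitting} and \ref{subgroup} and only spelling out the pullback-of-Higgs-field step.
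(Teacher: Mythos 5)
Your proposal matches the paper's own argument: the paper likewise observes (as in the proof of \ref{k}) that the pulled-back Higgs field $\theta_T$ remains maximal under a finite etale covering and then carries the proofs of \ref{nonsplitting} and \ref{subgroup} over to $T$ verbatim. Your extra care about compatibility of the Hodge filtration, connection, and Frobenius with $f^*$ is a sound elaboration of the same step, not a different route.
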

\begin{lemma} \label{the pair}
There exists a pair $\{\tilde C, \g\}$ such that 
\begin{enumerate}
\item $\tilde C \ra C$ is a finite etale covering ;
\item Let $\tilde \cV$(resp. $\tilde G$) denote the pull back Dieudonne crystal (resp. BT group) of $\cV$(resp. $G$) from $C$ to $\tilde C$. And $\g: \tilde \cV^\vee \ra \tilde \cV $ is an isomorphism. 
\end{enumerate}
\end{lemma}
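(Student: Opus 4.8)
The plan is to manufacture $\gamma$ out of the polarization $\lambda\colon X\to X^t$ by passing to crystalline Dieudonné theory and then stripping off the étale factor. Since $\lambda$ is an isogeny it induces a morphism $X[p^\infty]\to X^t[p^\infty]$ with finite kernel, and applying $\DD$ (contravariant, and compatible with duality) together with the identifications $\DD(X[p^\infty])=\cV\otimes\cT$ from \ref{confused part} and $X^t[p^\infty]\cong G^t\odot H^t$ yields a nonzero morphism of Dieudonné crystals
\[
\DD(\lambda)\colon (\cV\otimes\cT)^\vee\;\cong\;\cV^\vee\otimes\cT^\vee\longrightarrow \cV\otimes\cT .
\]
Being the crystalline realization of an honest map of $p$-divisible groups, this is compatible with Frobenius, with the Hodge and conjugate filtrations and with the Higgs field. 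The goal is then to descend it, after a finite étale base change, to an isomorphism $\tilde\cV^\vee\to\tilde\cV$ of the rank-two factor alone.

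Second, I would dispose of the étale factor $\cT=\DD(H)$. Here $\cT$ is the unit root $F$-crystal attached to the lisse $\Z_p$-sheaf $T_pH$, and $\cT^\vee=\DD(H^t)$ is the crystal of the dual sheaf. I would choose $\tilde C\to C$ finite étale so that, over $\tilde C$, the étale data is trivialized enough to identify $\tilde H$ with $\tilde H^t$ (equivalently $\tilde\cT\cong\tilde\cT^\vee$) and to cancel the two copies of $\cT$ appearing in $\DD(\lambda)$; writing $\tilde\cV,\tilde\cT$ for the pullbacks, this produces a nonzero morphism $\gamma_0\colon\tilde\cV^\vee\to\tilde\cV$ still respecting both filtrations and $\theta$. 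This is the step I expect to be the main obstacle. Two things must be checked simultaneously: that the tensor decomposition $\cV\otimes\cT$ is actually respected by $\DD(\lambda)$, so that a genuine self-duality of the rank-two factor appears rather than a form mixing $\cV$ and $\cT$; and that the contraction does not kill the $\cV$-component, i.e. $\gamma_0\neq 0$. Arranging the cover so that both the monodromy of $H$ and the relevant determinant line bundle $\det\tilde\cV$ become trivial (so that the canonical identification $\tilde\cV^\vee\cong\tilde\cV\otimes(\det\tilde\cV)^{-1}$ lets $\gamma_0$ be read as a global section of $\det\tilde\cV$) is the delicate point.

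Finally I would upgrade the nonzero $\gamma_0$ to an isomorphism, which is where Corollary \ref{k} and the subgroup analysis enter. A nonzero morphism of rank-two Dieudonné crystals $\gamma_0\colon\tilde\cV^\vee\to\tilde\cV$ is either an isomorphism or has rank-one image; in the latter case the image corresponds, on the generic fibre, to a proper nonzero Frobenius-stable subgroup scheme of $\tilde G[p]_\eta$, which by \ref{subgroup over etale} must be $\tilde G^\mult[p]_\eta$, and the resulting splitting of the connected–étale sequence contradicts \ref{nonsplitting}. Hence $\gamma_0$ is an isomorphism and we may take $\gamma=\gamma_0$. Corollary \ref{k} then plays the complementary role of rigidity: once $\gamma$ is an isomorphism, $\Hom(\tilde\cV^\vee,\tilde\cV)=k\cdot\gamma$ is one–dimensional, so the self-duality is unique up to a scalar, which is exactly the control one needs for the polarization-lifting argument that follows.
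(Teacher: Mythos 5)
There is a genuine gap, and it sits exactly where you flag ``the main obstacle.'' Your plan extracts $\g$ from the polarization by first showing that $\DD(\l)\colon \cV^\vee\otimes\cT^\vee\to\cV\otimes\cT$ respects the tensor decomposition and then cancelling the $\cT$-factor. But in the paper that tensor decomposition is Lemma \ref{decomposition of l}, and its proof \emph{uses} Lemma \ref{the pair} (it invokes the isomorphism $\sL_1\cong\sL_2^\vee$ supplied by $\g$ to identify $\Hom(\sL_1\otimes\cT,\sL_2^\vee\otimes\cT^\vee)$ with $\Hom(\cT,\cT^\vee)$). So the route you propose is circular as written, and you do not supply an independent argument for the decomposition. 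There is a second problem with ``trivializing the étale data'': $\cT=\DD(H)$ is a unit-root crystal attached to a rank-$h$ $\Z_p$-local system, whose monodromy image in $GL_h(\Z_p)$ need not be finite; only each finite level $H[p^n]$ is killed by a finite étale cover $T_n$, and these covers form an infinite tower. A single finite étale $\tilde C\to C$ trivializing $\cT$ (or identifying $\tilde\cT$ with $\tilde\cT^\vee$) is not available in general, so the contraction producing a well-defined, level-independent $\g_0\colon\tilde\cV^\vee\to\tilde\cV$ over $\tilde C$ does not go through.

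The paper avoids the polarization entirely. It considers the rank-one $F$-crystal $\wedge^2\cV$, Tate-twists it to a unit-root crystal, i.e.\ a character $\pi_1(C,\bar c)\to\Z_p^*$, argues that this character has finite image, and takes $\tilde C\to C$ killing it; then $\tilde\cV^\vee\cong\tilde\cV\otimes(\wedge^2\tilde\cV)^{-1}\cong\tilde\cV$ gives a nonzero map of crystals, hence of BT groups $\tilde G^t\to\tilde G$, which is an isomorphism by \ref{subgroup over etale}. You actually glimpse this mechanism in your parenthetical about $\det\tilde\cV$, and your final upgrade-to-isomorphism step (a nonzero map of height-$2$ groups with proper image would force the image to be $\tilde G^\mult[p]_\eta$, contradicting \ref{nonsplitting}) is essentially the paper's, so the fix is to promote the determinant-line argument from a side remark to the construction of $\g$ itself and drop the polarization from this lemma altogether.
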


\begin{proof}
Firstly we consider $\wedge^2 \cV$. It is a $F-$crystal over $C$ and hence corresponds to a triple $(\sL, \nabla, F)$, an invertible sheaf over $C'$ with integrable connection and Frobenius.  Obviously as $F-$isocrystal, $\sL$ is isoclinical of slope $p$. Replace $F$ by $p^{-1}F$ and then $\sL$ is unit root. Thereby it is equivalent to a one dimensional monodromy 
\[\pi_1(C, \bar c) \ra \Z^*_p.\] Note $\pi^{\rm{ab}}_1(C ,\bar c)$ is an extension of $\hat{\Z} $ with a finite group. So the image of the monodromy is just a finite subgroup of $\Z^*_p$.  And then there exists a finite etale covering $\tilde C \ra C$ such that the pull back of the monodromy to $\tilde C$ is trivial. 

Let $\tilde \cV$ denote the pull back of $\cV$ to $\tilde C$. Then $\wedge^2 \tilde\cV$ is trivial as an $F-$crystal which implies as a $F-$isocrystal, $\tilde \cV$ is self-dual. Consider the following morphism 
\[\tilde \cV \ra \tilde \cV \otimes \tilde \cV \otimes \tilde \cV^\vee \ra \wedge^2 \tilde \cV \otimes \tilde \cV^\vee \cong \tilde \cV^\vee.\] Obviously it is nonzero and induces a morphism between BT groups $\tilde G^t \ra \tilde G$. By $\ref{subgroup over etale}$, the morphism $\tilde G^t \ra  \tilde G$ is isomorphic. So we have found an isomorphism $\g: \tilde \cV^\vee \ra \tilde \cV.$
\end{proof}

Let $\cT^\vee$ be the unit root crystal $\DD(H^\vee)$ corresponding to the dual $H^\vee$ and $\tilde H$ (resp. $\tilde \cT$) be the pull back of $H$ (resp. $\cT$) to $\tilde C$. Since $\tilde H$ is an etale BT group, for each $n$ there exists a finite etale covering $T_n$ of $\tilde C$ such that  the pull back of $\tilde H[p^n]$ to $T_n$ is trivial. Then 

\[T_n \ra T_{n-1} \ra \cdots \ra T_1 \ra \tilde C\] is a tower of finite etale coverings of $\tilde C$.

Then the polarization $\l_1: X[p] \ra X^t[p]$ induces $\tilde \l^*_1: \tilde \cV^\vee_{\tilde C }\otimes \tilde \cT^\vee_{\tilde C} \ra \tilde \cV_{\tilde C }\otimes \tilde \cT_{\tilde C}$. 
Let $\g_1$ be the restriction of $\g$ to $\tilde \cV^\vee_{\tilde C}$.

\begin{lemma} \label{decomposition of l}
The morphism $ \tilde \l^*_1$ has a tensor decomposition
\[\tilde \l^*_1 = \g_1 \otimes \b_1\] where  $ \b_1$ is a morphism between $\tilde\cT^\vee_{\tilde C} $ and $\tilde\cT_{\tilde C}$.  
\end{lemma}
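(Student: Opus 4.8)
The plan is to reduce the statement to the rigidity result \ref{k} after trivialising the \'etale factor, and then to descend. Since $\g_1\colon\tilde\cV^\vee_{\tilde C}\to\tilde\cV_{\tilde C}$ is an isomorphism, I would set $\Phi:=\tilde\l^*_1\circ(\g_1^{-1}\otimes\mathrm{id})$, a morphism of Dieudonn\'e crystals from $\tilde\cV_{\tilde C}\otimes\tilde\cT^\vee_{\tilde C}$ to $\tilde\cV_{\tilde C}\otimes\tilde\cT_{\tilde C}$. Proving the lemma is equivalent to proving $\Phi=\mathrm{id}_{\tilde\cV}\otimes\b_1$ for some $\b_1\colon\tilde\cT^\vee_{\tilde C}\to\tilde\cT_{\tilde C}$, because then $\tilde\l^*_1=\Phi\circ(\g_1\otimes\mathrm{id})=\g_1\otimes\b_1$.

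First I would pass to the \'etale cover $T_1\to\tilde C$ over which $\tilde H[p]$ becomes constant, so that $\tilde\cT_{T_1}$ and $\tilde\cT^\vee_{T_1}$ acquire bases $\{f_j\}$ and $\{e_i\}$ of horizontal, Frobenius-fixed sections; this is exactly the unit-root triviality of $\DD(\tilde H[p])$. Since $\cT$ is a unit-root crystal it contributes nothing to the Hodge filtration, the conjugate filtration or the Higgs field, so that on $\tilde\cV\otimes\tilde\cT^\vee$ these structures are simply the ones on $\tilde\cV$ tensored with $\tilde\cT^\vee$ (this is the tensor decomposition recorded in \ref{confused part} and exploited in \ref{verdeform}). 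Writing $\Phi(v\otimes e_i)=\sum_j\phi_{ij}(v)\otimes f_j$ in these bases, the fact that $\Phi$ respects $F$, the Hodge and conjugate filtrations and $\theta$, combined with the horizontality and $F$-invariance of the $e_i,f_j$, forces each component $\phi_{ij}\colon\tilde\cV_{T_1}\to\tilde\cV_{T_1}$ to respect all of the same structures. Hence $\phi_{ij}\in\End(\tilde\cV_{T_1},\sL_{1\,T_1}\hookrightarrow\tilde\cV_{T_1},\sL^\s_{2\,T_1}\hookrightarrow\tilde\cV_{T_1},\theta_{T_1})$, which equals $k$ by \ref{k}. Thus $\phi_{ij}=c_{ij}\,\mathrm{id}$ with $c_{ij}\in k$, and over $T_1$ we obtain $\Phi=\mathrm{id}_{\tilde\cV}\otimes\b^{T_1}_1$, where $\b^{T_1}_1$ is the morphism of unit-root crystals with matrix $(c_{ij})$ in the chosen bases.

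Finally I would descend $\b^{T_1}_1$ from $T_1$ to $\tilde C$. Because $\tilde\cV\ne0$, the fixed factor $\mathrm{id}_{\tilde\cV}$ determines $\b^{T_1}_1$ uniquely from $\Phi$; since $\Phi$ and $\mathrm{id}_{\tilde\cV}$ are already defined over $\tilde C$, the map $\b^{T_1}_1$ is invariant under $\mathrm{Aut}(T_1/\tilde C)$, and by \'etale descent it arises from a morphism $\b_1\colon\tilde\cT^\vee_{\tilde C}\to\tilde\cT_{\tilde C}$ with $\Phi=\mathrm{id}_{\tilde\cV}\otimes\b_1$. Composing with $\g_1\otimes\mathrm{id}$ then yields $\tilde\l^*_1=\g_1\otimes\b_1$, which is the assertion.

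I would expect the main obstacle to lie in the second paragraph: making precise that the components $\phi_{ij}$ really are morphisms of the full package $(\tilde\cV,\text{Hodge},\text{conjugate},\theta,F)$, so that \ref{k} can be applied. This rests on the horizontality and Frobenius-invariance of the trivialising sections of the unit-root crystal $\tilde\cT$, together with the clean tensor form of the Hodge filtration and Higgs field established in \ref{confused part}. Once these compatibilities are verified, the reduction to scalars via \ref{k} and the Galois descent are routine.
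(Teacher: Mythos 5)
Your argument is correct and follows the paper's strategy: pass to the finite \'etale cover $T_1$ trivializing $\tilde H[p]$, use the rigidity statement \ref{k} to force the $\tilde\cV$-components to be scalars, and descend the resulting matrix of constants to $\tilde C$. The only organizational difference is that you precompose with $\g_1^{-1}\otimes\mathrm{id}$ and apply \ref{k} to each component $\phi_{ij}$ directly, whereas the paper first builds a candidate $\g'\otimes\b'$ from the induced map on the Hodge filtration and then kills the difference using \ref{d=0}; both routes rest on the same underlying rigidity.
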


\begin{proof} Firstly we consider the analogous result over ${T_1}$. Let $\l_{T_1}$(resp. $\cV_{T_1}$, $\cT_{T_1}$) be the base change of $\l_1$(resp. $\cV_C$, $\cT_C$) to ${T_1}$. Then $\l^{*}_ {1\,{T_1}}$ induces a morphism between Hodge filtrations:
\[\xymatrix{
0\ar[r]& \sL_{1\,{T_1}} \otimes \cT_{T_1} \ar[r] & \tilde \cV_{T_1} \otimes \cT_{T_1} \ar[r] &  \sL_{2\, {T_1}}\otimes \cT_{T_1}  \ar[r] & 0\\
0 \ar[r]& \sL^\vee_{2\, {T_1}} \otimes \cT^\vee _{T_1} \ar[r] \ar[u]& \tilde \cV^\vee_{T_1} \otimes \cT^\vee_{T_1} \ar[r] \ar[u]^{\l^*_{1\,{T_1}}} & \sL^\vee_{1\, {T_1}} \otimes \cT^\vee_{T_1} \ar[r] \ar[u]& 0 .
}\] 
On one hand, since $\cT_{T_1}$ is trivial and $\l^*_{1\, T_1}$ is nonzero, by \ref{k}, $\l^*_{1\, {T_1}}$ induces an isomorphism $\g' \in \Hom(\cV^\vee_{T_1}, \cV_{T_1})$, preserving the Hodge filtration, the Frobenius and the Higgs field. 

On the other hand,  we have that $\sL_{1\, {T_1}} \cong \sL^\vee_{2\,{T_1}}$ by \ref{the pair}, and hence  $\Hom(\sL_{1\,{T_1}} \otimes \cT_{T_1}, \sL^\vee_{2\,{T_1}} \otimes \cT^\vee_{T_1})\cong \Hom(\cT_{T_1}, \cT^\vee_{T_1})$. Let $\b'\in \Hom(\cT_{T_1}, \cT^\vee_{T_1})$ be the corresponding morphism. Thus there exists an isomorphism $t\in \Hom(\sL_{1\,{T_1}}, \sL^\vee_{2\,{T_1}})$ such that the induced morphism on the kernel(resp. cokernel) is $t\otimes \bar \b'$(resp. $t^\vee \otimes \bar \b'^\vee$). We can adjust $\g'$ by a scalar such that it induces $t$.

Consider the difference $d= \l^*_{T_1}- \g' \otimes \b'$. Then $d$ preserves Hodge filtration, conjugate filtration and the Higgs field while induces zero morphism on the kernel and cokernel of Hodge filtration. By \ref{d=0}, d is zero. So $ \l^*_{T_1}= \g' \otimes \b'$. Again by \ref{k}, we can adjust $\g'$ by a scalar $k$ so that $\l^*_{1\, T_1}\cong \g_{1\,T_1} \otimes \b'$.

Note both of $\l^*_{1\, T_1}, \g_{1\,T_1}$ can descend to $\tilde C$. Thus $\b'$ descends to a morphism $\b_1$ over $\tilde C$ and $\l^*_1 = \g_1 \otimes \b_1$. 
\end{proof}
In the proof of \ref{decomposition of l}, the only special property of the polarization $\tilde\l$ we use is that it preserves the Hodge filtration, Frobenius and the Higgs field. So we obtain the following corollary. 
\begin{cor} \label{general map}
In the \ref{decomposition of l}, we can replace $\tilde \l^*_1$ by any nonzero morphism between $\tilde \cV_{\tilde C}\otimes \tilde \cT$ and $\tilde \cV^\vee_{\tilde C} \otimes\tilde \cT^\vee$ in the category of  $F-$crystals over $\cri(\tilde C/k)$ and the result still holds.
\end{cor}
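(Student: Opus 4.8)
The plan is to take seriously the remark preceding the statement: the proof of \ref{decomposition of l} never used that $\tilde\l^*_1$ came from a polarization, only that, viewed as a map of evaluations, it respected three structures --- the Hodge filtration $\sL_1\hookrightarrow\cV_{\tilde C}$, the Frobenius $F$, and the Higgs field $\theta$. Thus the entire content of the corollary is the observation that these three properties are automatic for any morphism in the category of $F$-crystals over $\cri(\tilde C/k)$. I would therefore organize the argument in two stages: first check the three properties abstractly for an arbitrary nonzero $F$-crystal morphism, then transcribe the proof of \ref{decomposition of l} without change.

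For the verification, let $\phi\colon\tilde\cV\otimes\tilde\cT\ra\tilde\cV^\vee\otimes\tilde\cT^\vee$ be a nonzero morphism of $F$-crystals. Compatibility with $F$ is built into the definition, and it immediately yields compatibility with the conjugate filtration $\im F$ (since $\phi(\im F)=F^\vee(\phi^\s(\cV^\s))\subseteq\im F^\vee$), which is what \ref{k} and \ref{d=0} actually require. Compatibility with the Higgs field is equally formal: a morphism of crystals is horizontal for the canonical connections, hence commutes with $\nabla$, and passing to the associated graded of the Hodge filtration it commutes with $\theta$. The one property that is \emph{not} literally part of the definition, and the point I would single out as needing an argument, is preservation of the Hodge filtration. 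In characteristic $p$ the Hodge filtration is recovered from $F$ alone: as recorded in \ref{confused part} via \cite[2.5.2]{deJ}, on the special fibre one has $\ker F=\sL_1^\s$, the Frobenius pullback of the Hodge subbundle. Since $\phi$ commutes with $F$, its twist $\phi^\s$ sends $\ker F$ into $\ker F^\vee$, i.e. $\sL_1^\s$ into $(\sL_2^\vee)^\s$; because relative Frobenius on the smooth curve $\tilde C$ is flat, this containment descends to $\phi(\sL_1)\subseteq\sL_2^\vee$, which is precisely preservation of the Hodge filtration.

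With all three structures in hand I would re-run the proof of \ref{decomposition of l} verbatim. Pull back to the cover $T_1$ on which $\tilde\cT$ trivializes; since $T_1\ra\tilde C$ is \'etale the Higgs field stays maximal, so \ref{k} applies and the nonzero morphism $\phi_{T_1}$, after composition with the isomorphism $\g$ of \ref{the pair}, induces an endomorphism of $\cV_{T_1}$ preserving Hodge filtration, conjugate filtration and Higgs field, hence a scalar. Combining this with the self-duality $\sL_1\cong\sL_2^\vee$ furnished by \ref{the pair} produces a tensor candidate $\g'\otimes\b'$, and \ref{d=0} kills the difference $d=\phi_{T_1}-\g'\otimes\b'$, since it preserves all three structures while inducing zero on $\Hom(\sL_2,\sL_1^\vee)$. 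As both $\phi_{T_1}$ and $\g_{1\,T_1}$ descend to $\tilde C$, the factor $\b'$ descends as well, giving $\phi=\g_1\otimes\b$ over $\tilde C$. The main obstacle is exactly the Hodge-filtration claim for a morphism assumed only to commute with $F$ (and not a priori with $V$); once that is secured via the $\ker F$ identification, everything else is either formal or a transcription of the earlier proof.
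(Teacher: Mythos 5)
Your proposal is correct and follows essentially the same route as the paper, whose entire proof is the remark preceding the corollary: the argument for \ref{decomposition of l} only used that $\tilde\l^*_1$ preserves the Hodge filtration, the Frobenius and the Higgs field, so it applies verbatim to any nonzero morphism of $F$-crystals. The one thing you add --- recovering preservation of the Hodge filtration from $\ker F=\sL_1^\s$ and descending along the flat Frobenius, with horizontality giving compatibility with $\theta$ --- is exactly the verification the paper leaves implicit, and it is sound.
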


\begin{prop}
As a morphism between Dieudonne crystals, $\tilde \l^*=\g\otimes \b$ where $\b$ is a morphism between $\cT$ and $\cT^\vee$.
\end{prop}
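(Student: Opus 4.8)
The plan is to bootstrap the integral statement from its reduction modulo $p$, which is precisely Lemma~\ref{decomposition of l}, by a $p$-adic approximation along the tower $T_n \to \tilde C$ that trivializes $\tilde H[p^n]$. First I would use the isomorphism $\gamma$ of Lemma~\ref{the pair} to normalize the problem. Since $\gamma\colon \tilde\cV^\vee \to \tilde\cV$ is an isomorphism of Dieudonn\'e crystals over $\tilde C$, postcomposing its inverse produces a morphism of Dieudonn\'e crystals
\[
\psi := (\gamma^{-1}\otimes \mathrm{id}_{\tilde\cT})\circ \tilde\lambda^*\colon \tilde\cV^\vee\otimes\tilde\cT^\vee \longrightarrow \tilde\cV^\vee\otimes\tilde\cT,
\]
and it suffices to prove $\psi = \mathrm{id}_{\tilde\cV^\vee}\otimes\beta$ for a morphism $\beta\colon \tilde\cT^\vee\to\tilde\cT$ of unit-root crystals, since then $\tilde\lambda^* = \gamma\otimes\beta$. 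By Lemma~\ref{decomposition of l} we already know that $\psi \bmod p$ has exactly this shape, the residual scalar being absorbed into $\gamma$.

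The core of the argument is to compute the endomorphisms of the full Dieudonn\'e crystal $\tilde\cV$ over the covers $T_n$. Any endomorphism of $\tilde\cV_{T_n}$ as a Dieudonn\'e crystal commutes with $F$, $V$ and the connection, so its reduction modulo $p$ preserves the Hodge filtration, the conjugate filtration and the Higgs field; since the Higgs field stays maximal on the \'etale cover $T_n$, Corollary~\ref{k} forces this reduction to be multiplication by a global function, i.e.\ an element of $H^0(\cO)=k$. Because $\tilde\cV_{T_n}$ is $p$-torsion free, the reduction map $\End(\tilde\cV_{T_n})/p \to \End(\tilde\cV_{T_n}\bmod p)$ is injective with image inside the scalars $k$, while the scalars $H^0(\cO_{\tilde C'})=W$ already inject into $\End(\tilde\cV_{T_n})$; a Nakayama/completeness argument then gives $\End(\tilde\cV_{T_n})=W\cdot\mathrm{id}$. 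Now reduce $\psi$ modulo $p^n$ and restrict to $T_n$, where $\tilde\cT^\vee/p^n$ and $\tilde\cT/p^n$ are free of rank $h=\Ht(H)$. There
\[
\Hom(\tilde\cV^\vee_{T_n}\otimes\tilde\cT^\vee, \tilde\cV^\vee_{T_n}\otimes\tilde\cT)\bmod p^n \cong M_h\bigl(\End(\tilde\cV^\vee_{T_n})/p^n\bigr)=M_h(W_n),
\]
so $\psi\bmod p^n$ is a scalar matrix, i.e.\ $\psi_n|_{T_n}=\mathrm{id}_{\tilde\cV^\vee}\otimes\beta_n$ with $\beta_n\colon \tilde\cT^\vee/p^n\to\tilde\cT/p^n$.

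Finally I would descend and pass to the limit. As $\psi$ and $\mathrm{id}_{\tilde\cV^\vee}$ are defined over $\tilde C$ and the assignment $\beta\mapsto\mathrm{id}\otimes\beta$ is injective and $\mathrm{Gal}(T_n/\tilde C)$-equivariant, each $\beta_n$ is Galois invariant, hence descends to a morphism $\tilde\cT^\vee/p^n\to\tilde\cT/p^n$ over $\tilde C$; these are compatible in $n$, so $\beta:=\varprojlim_n\beta_n$ is a morphism $\tilde\cT^\vee\to\tilde\cT$ over $\tilde C$ with $\psi=\mathrm{id}\otimes\beta$, giving $\tilde\lambda^*=\gamma\otimes\beta$. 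The main obstacle is the integral endomorphism computation: one must check carefully that a Dieudonn\'e-crystal endomorphism preserves all three structures appearing in Corollary~\ref{k}, so that its mod-$p$ reduction is forced to be a scalar, and then that this rigidity propagates $p$-adically (via $p$-torsion freeness and finiteness of $\End$ over $W$) to pin down $\End(\tilde\cV_{T_n})=W$. Once this is established the \'etale trivialization, the descent and the inverse limit are formal.
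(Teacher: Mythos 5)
Your proposal is correct in outline and reaches the same conclusion, but it reorganizes the paper's argument in a genuinely different way. The paper runs an explicit induction on $n$: it lifts $\b_{T_{n-1}}$ arbitrarily to some $\b_{T_n}$, observes that $(\l^*_n)_{T_n}-\g_{T_n}\otimes\b_{T_n}$ factors through the $p^{n-1}$-torsion, applies the mod-$p$ rigidity in the form of Corollary \ref{general map} to that difference morphism, and absorbs the resulting correction $p^{n-1}\b'$ into $\b$. You instead compute the endomorphism ring of $\tilde\cV_{T_n}$ once and deduce the tensor decomposition from the Morita-type identity $\Hom\bigl((\tilde\cV^\vee/p^n)^{\oplus h},(\tilde\cV^\vee/p^n)^{\oplus h}\bigr)=M_h\bigl(\End(\tilde\cV^\vee/p^n)\bigr)$; this is arguably cleaner and isolates the role of Corollary \ref{k}, at the price of having to pin down the integral endomorphism ring. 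Two points need repair before this is airtight. First, $W\cdot\mathrm{Id}$ does \emph{not} sit inside $\End(\tilde\cV_{T_n})$: multiplication by $w\in W$ fails to commute with the $\s$-semilinear Frobenius unless $w\in\Z_p$, and in fact $F$-equivariance forces the scalar produced by Corollary \ref{k} to satisfy $s=s^p$, i.e.\ $s\in\F_p$; the correct statement is $\End(\tilde\cV_{T_n}/p^n)=\Z/p^n$ (likewise, morphisms of the trivialized unit-root crystals over the connected cover $T_n$ are matrices over $\Z/p^n$, not $W_n$). Second, you silently identify $\End(\tilde\cV_{T_n})/p^n$ with $\End(\tilde\cV_{T_n}/p^n)$; surjectivity of reduction of endomorphisms is not free, so you should compute $\End(\tilde\cV_{T_n}/p^n)$ directly by the evident induction: $\phi\equiv s\cdot\mathrm{Id}\ (\mathrm{mod}\ p)$ with $s\in\F_p$ by Corollary \ref{k} and $F$-equivariance, and then $\phi-s\cdot\mathrm{Id}$ factors through an endomorphism of $\tilde\cV_{T_n}/p^{n-1}$. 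With these adjustments your \'etale trivialization, Galois descent of $\beta_n$, and passage to the inverse limit all go through, and the proof is a valid alternative to the one in the paper.
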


\begin{proof}
We will prove the tensor decomposition by induction on the group order. Let $ \l^*_n$ be the polarization 
\[ \l^*_n:   \DD(G^t[p^n]) \otimes \DD(H^\vee[p^n])\ra \DD(G[p^n]) \otimes \DD(H[p^n])\]
It is true for $n=1$. If it is true for $n-1$, then for $n$, 
\[\xymatrix{
\DD(\tilde G[p]) \otimes \DD(\tilde H[p])   & \DD(\tilde G^t[p]) \otimes \DD(\tilde H[p])^t \ar[l]^{\g_1 \otimes \b_1}\\
\DD(\tilde G[p^n]) \otimes \DD(\tilde H[p^n])  \ar[u]^{p^{n-1}} & \DD(\tilde G^t[p^n]) \otimes \DD(\tilde H[p^n])^t \ar[l]^{\tilde \l_n} \ar[u]^{p^{n-1}}\\
\DD(\tilde G[p^{n-1}]) \otimes \DD(\tilde H[p^{n-1]}) \ar[u]  & \DD(\tilde G^t[p^{n-1}]) \otimes \DD(\tilde H[p^{n-1}])^t \ar[u]\ar[l]^{\g_{n-1} \otimes \b_{n-1}}.\\
}\]
Base change to $T_n$,  since  $H[p^n]$ is trivial on $T_n$, we can find 
\[\b_{T_n}: \DD(H_{T_n}[p^n])^t \ra \DD(H_{T_n}[p^n])\]
so that the restriction of $\b_{T_n}$ to $\DD(H'_{T_{n-1}}[p^{n-1}])$ yields $\b_{T_{n-1}}$.

Therefore $(\l^*_n)_{T_n}-\g_{T_n} \otimes_{\Z/p^n\Z} \b_{T_n}$ factors through 
\[\a: \DD(G_{T_n}[p])^\vee \otimes \DD(H_{T_n}[p])^t \ra \DD(G_{T_n}[p^{n-1}]) \otimes \DD(H_{T_n}[p^{n-1}]).\]
 
Definitely $\im\a \subset \DD(G_{T_n}[p]) \otimes \DD(H_{T_n}[p])$. If $\a$ is nonzero, by \ref{general map}, $\a=\g_1 \otimes \b'$ where $\b': \DD(H_{T_n}[p])^t \ra \DD(H_{T_n}[p^{n-1}])$. 

Note $\g_1=\g_{T_n}|_ {p^{n-1}\DD(G_{T_n}[p^n])^\vee}$, we have $\a$ can be written as the composition
\[\DD(G^{t}_{T_n}[p^n])\otimes_{\Z/p^n} \DD(H_{T_n}[p^n])^t \xrightarrow{\text{Id}\otimes p^{n-1}} \DD(G^t_{T_n}[p^n]) \otimes_{\Z/p^n}\DD(H_{T_n}[p])^t \xrightarrow{\g_{T_n} \otimes \b'} \DD(G_{T_n}[p^{n-1}])\otimes_{\Z/p^n} \DD(H_{T_n}[p^{n-1}]).\] Thus $ (\l^*_n)_{T_n}-\g_{T_n} \otimes_{\Z/p^n} \b_{T_n}=\g_{T_n} \otimes_{\Z/p^n} p^{n-1} \b'$. So $(\l^*_n)_{T_n}=\g_{T_n}\otimes(\bar\b_{T_n}+p^{n-1}\b')$.

Since both $\l^*_n$ and $\g_{T_n}$ can be descent to $\tilde C$, $\bar\b_{T_n}+p^{n-1}\b'$ can be descent to $\tilde C$ as well. Therefore as a morphism between Dieudonne crystal over $\tilde C$, $\tilde\l^*=\g \otimes \b$.
\end{proof}

By the equivalence between Dieudonne crystals and BT groups over $\tilde C$(\cite{deJ} Main Theorem 1), we know $\tilde \l$ can be decomposed to a tensor product of an isomorphism between $\tilde G$ and $\tilde G^t$ and a morphism between $H$ and $H^\vee$. Apply \ref{main theorem} to $(\tilde C, \tilde G)$ and thus there exists a strongly unique pair $(\tilde C', \tilde G')$ over $W$ which is a lifting of $(\tilde C, \tilde G)$. In particular, the isomorphism between $\tilde G$ and $\tilde G^t$ lifts to an isomorphism $\tilde G'$ and $\tilde G'^t$. Since $H$ is etale over $C$, any morphism between $\tilde H$ and $\tilde H^\vee$ naturally lifts. Therefore the polarization $\tilde \l$ lifts.

For each $n$, let $\tilde \l_n$ denote the lifting of $\l$ to $\tilde X_n/\tilde C_n$. Since  $\Aut(\tilde C_n/C_n)=\Aut(\tilde C/C),$ $\tilde \l_n$ is a descent datum as $\tilde \l$ and hence descends to a polarization $\l_n$ on $X_n/C_n$. 

By \ref{uniqueness}, such a lifting $\l_n$ is unique. Therefore we have a compatible family of polarization $\{\l_n\}$ and hence the formal family $\{X_n \ra C_n\}$ can be algebraized to a family of abelian varieties $X'\ra C'$ over $W$. 

\begin{rmk}
If we know in priori that $X/C$ is a family of principally polarized abelian varieties, i.e. $\l$ is isomorphic, then $\l$ naturally lifts to any $X_n$ since the lifting $X_n$ of $X$ is strongly unique. 
\end{rmk}

\subsection{Lift to Shimura curves} In the following proposition, we show $X' \ra C'$ is a Shimura curve of Mumford type.

\begin{prop} \label{the Mumford type}
The generic fiber $(X'/C')\otimes \C $ is a Shimura curve of Mumford type.
\end{prop}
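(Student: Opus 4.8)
The plan is to identify the lifted family $X'/C'$ with a family coming from a Shimura curve of Mumford type by matching Hodge-theoretic and arithmetic data at the generic fiber. Recall that by construction $X[p^\infty] \cong G \odot H$ with $G$ a height $2$ BT group and $H$ étale, and that we have produced a polarized abelian scheme $X' \to C'$ over $W$ lifting $X/C$. The first thing I would do is base change to $\C$ via an embedding $W \hookrightarrow \C$ (choosing one once and for all) and study the variation of Hodge structure $R^1\pi_* \Q$ attached to $X'_\C \to C'_\C$. The key input is hypothesis (1): the maximal Higgs field persists under our lifting, since $C'$ was constructed precisely as the versal deformation carrying $G$, so the Kodaira--Spencer map remains an isomorphism on the rank $2$ factor. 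This gives a rank $2$ local system $\mathbb{V}$ (the ``$G$-part'') whose associated Higgs bundle is maximal, i.e. $\theta: \mathcal{L}_1 \to \mathcal{L}_2 \otimes \Omega^1_{C'_\C}$ is an isomorphism.

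Next I would invoke the rigidity/uniformization theory for maximal Higgs fields. A rank $2$ local system on a complete curve of genus $\geq 2$ with maximal Higgs field is, by the theorem of Viehweg--Zuo (and the underlying uniformization result of Simpson), exactly a uniformizing local system: its monodromy representation $\pi_1(C'_\C) \to \mathrm{SL}_2(\R)$ realizes $C'_\C$ as a quotient of the upper half plane, and the period map is an isomorphism onto its image. The decomposition $X[p^\infty] \cong G \odot H$, together with the tensor structure established in Lemma \ref{confused part} and the polarization decomposition $\tilde\lambda^* = \gamma \otimes \beta$, forces the full Hodge structure $H^1(X'_\C)$ to be (a Tate twist of) a tensor construction $\mathbb{V} \otimes W$ where $W$ is the local system attached to $H$. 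I would then check that the Mumford--Tate group of the generic fiber is contained in the group $G = \{x \in D^* \mid x\bar x = 1\}$ of the Mumford construction, by analyzing the endomorphism algebra: Corollary \ref{k} shows the endomorphisms of the $G$-part are just $k$, and over $\C$ this should upgrade to show $\End$ of the family generates (via the tensor/corestriction structure) precisely a quaternion division algebra $D$ over a totally real cubic field $F$ with the signature $D \otimes_\Q \R \cong \mathbb{H} \times \mathbb{H} \times M_2(\R)$ and $\mathrm{Cor}_{F/\Q}(D) \cong M_8(\Q)$.

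Concretely, the steps I would carry out are: (i) transport the maximal Higgs field to $C'_\C$ and extract the rank $2$ uniformizing local system; (ii) apply the Viehweg--Zuo characterization to see that $C'_\C$ together with $\mathbb{V}$ is a Shimura curve quotient, i.e. the period map identifies it with an arithmetic quotient of $\mathbb{H}$; (iii) reconstruct the full weight-one Hodge structure on $H^1(X'_\C)$ from the tensor decomposition $X[p^\infty] \cong G \odot H$ and compute its generic Mumford--Tate group; (iv) verify that this group, together with the totally real cubic field and quaternion algebra extracted from the endomorphism structure and the corestriction condition, matches the Mumford datum $(G,h)$, so that $(X'/C')_\C$ is a fiber of the universal family over a Shimura curve of Mumford type.

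The hard part will be step (iii)--(iv): proving that the arithmetic data is \emph{exactly} of Mumford type rather than some other quaternionic Shimura variety. The maximal Higgs condition alone guarantees we land in a Shimura curve parameterizing abelian varieties with the correct Hodge-theoretic shape, but pinning down the cubic totally real field $F$ and verifying the crucial corestriction identity $\mathrm{Cor}_{F/\Q}(D) \cong M_8(\Q)$ — which is what distinguishes Mumford's construction and forces the dimension $2^m$ of the abelian varieties — requires a careful analysis of how the tensor factor $H$ (the étale part) interacts with the Galois/monodromy action and contributes the corestriction-type structure. I expect this to hinge on the special decomposition $X[p^\infty] \cong G \odot H$ being precisely the $p$-adic shadow of the tensor-induced Hodge structure in Mumford's construction, so the bulk of the work is translating the $\odot$-decomposition and the polarization splitting $\gamma \otimes \beta$ into the statement that the generic Mumford--Tate group is the $\Q$-form of $SU(2) \times SU(2) \times SL(2,\R)$ coming from a quaternion division algebra with $\mathrm{Cor}_{F/\Q}(D)$ split.
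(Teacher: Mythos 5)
There is a genuine gap. Your plan correctly begins the way the paper does --- transport the maximal Higgs field to the generic fiber over $\C$ --- but then you leave the decisive part of the argument (your steps (iii)--(iv): extracting the cubic totally real field $F$, the quaternion algebra $D$, and verifying $\mathrm{Cor}_{F/\Q}(D)\cong M_8(\Q)$ so that the generic Mumford--Tate group is exactly the Mumford datum) as an explicit expectation rather than a proof. That classification is precisely where all the difficulty lives, and you do not supply it. The paper does not attempt it either: its entire proof is to observe that $X'\ra C'$ has maximal Higgs field and then cite Theorem 0.5 of \cite{Zuo} (Viehweg--Zuo), which in full strength already says that a family of abelian varieties over a complete curve reaching the Arakelov bound --- equivalently, having maximal Higgs field --- \emph{is} (the good part of) a Shimura curve of Mumford type. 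You invoke Viehweg--Zuo only for the uniformization of the rank $2$ factor and then try to rebuild the arithmetic classification by hand from the $G\odot H$ decomposition and the splitting $\tilde\lambda^*=\gamma\otimes\beta$; without completing that, the proposal is not a proof.

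A secondary point: the characteristic-$p$ inputs you lean on in steps (iii)--(iv) (the $\odot$-decomposition, Lemma \ref{confused part}, Corollary \ref{k}, the polarization splitting) are used in the paper only to \emph{construct} the lifting $X'\ra C'$ and to lift the polarization; they play no role in the final identification over $\C$. Once the lift exists, the maximal Higgs field on the generic fiber is the only hypothesis needed, because the cited theorem of Viehweg--Zuo carries the full classification. So the fix is simple: either quote that theorem in its complete form (as the paper does), or supply the missing Mumford--Tate computation --- but the latter amounts to reproving Viehweg--Zuo's result.
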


\begin{proof}
Obviously the family $X' \ra C'$ has maximal Higgs field.  By Theorem 0.5 in \cite{Zuo}, we directly conclude that $(X'/C')\otimes \C$ is a Shimura curve of Mumford type.  Note in that theorem, a family that reaches the Arakelov bound is equivalent to a family with the maximal Higgs field. So we can apply that theorem in our situation. 
\end{proof}

\begin{appendices}
\section{The dot tensor product $G\odot H$} \label{the dot product}
In this appendix, let $G$ and $H$ be BT groups over any connected smooth variety $X$.  And $H$ is further an etale BT group with height h.

\begin{lemma} \label{tensor}
For any two integers $n<m$,
$G[p^n] \otimes_{\Z/p^m} H[p^m]=G[p^n]\otimes_{\Z/p^n} H[p^n]$.
\end{lemma}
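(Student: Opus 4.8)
The plan is to deduce the identity from the purely formal base-change property of the tensor product, isolating a single factor to which the structure of $H$ can then be applied. First I would observe that $G[p^n]$ is annihilated by $p^n$, so it is naturally a sheaf of modules over the quotient ring $\Z/p^n=(\Z/p^m)/(p^n)$. Associativity of the tensor product along the surjection $\Z/p^m\twoheadrightarrow\Z/p^n$ then yields
\[
G[p^n]\otimes_{\Z/p^m}H[p^m]\;\cong\;G[p^n]\otimes_{\Z/p^n}\bigl(\Z/p^n\otimes_{\Z/p^m}H[p^m]\bigr).
\]
This is valid for sheaves of modules over the constant sheaf of rings $\Z/p^m$, since tensoring is right exact and the first factor is a constant sheaf. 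The problem is thereby reduced to understanding the single sheaf $\Z/p^n\otimes_{\Z/p^m}H[p^m]$.

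Next I would compute this factor. From the presentation $\Z/p^m\xrightarrow{p^n}\Z/p^m\to\Z/p^n\to0$ and right exactness, $\Z/p^n\otimes_{\Z/p^m}H[p^m]$ is the cokernel of multiplication by $p^n$ on $H[p^m]$, that is $H[p^m]/p^nH[p^m]$. It then remains to produce a canonical isomorphism $H[p^m]/p^nH[p^m]\cong H[p^n]$. For this I would invoke the two standard short exact sequences of the BT group $H$: the sequence $0\to H[p^n]\to H[p^m]\xrightarrow{p^n}H[p^{m-n}]\to0$ identifies $p^nH[p^m]$ with the subgroup $H[p^{m-n}]$, while the sequence $0\to H[p^{m-n}]\to H[p^m]\xrightarrow{p^{m-n}}H[p^n]\to0$ identifies the quotient $H[p^m]/H[p^{m-n}]$ with $H[p^n]$ via multiplication by $p^{m-n}$. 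Combining the two gives the desired canonical isomorphism, and hence the lemma. Since $H$ is \'etale of height $h$, one may equally verify this after passing to a finite \'etale cover on which $H[p^k]\cong(\Z/p^k)^{h}$ for all $k$, where everything reduces to the elementary module identity $(\Z/p^n)^{\oplus h}$ on both sides.

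The step I expect to require the most care is not the algebra but the sheaf-theoretic bookkeeping: the tensor products here are formed in the category of fppf (equivalently \'etale, as $H$ is \'etale) abelian sheaves, so I must check that each of the isomorphisms above is canonical and sheaf-theoretically valid rather than merely an identity of abstract modules. Working \'etale-locally, where $H$ trivializes, makes each identification concrete, and the fact that $H$ is \'etale keeps all the sheaves in sight finite locally free, so that forming cokernels and tensor products stays within this category; the only remaining point is that the descent data on the two sides agree, which follows from the functoriality of the constructions.
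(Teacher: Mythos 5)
Your proposal is correct and follows essentially the same route as the paper: the paper tensors the right-exact sequence $H[p^m]\xrightarrow{p^n}H[p^m]\to H[p^n]\to 0$ with $G[p^n]$ and uses that $p^n$ annihilates $G[p^n]$, which is exactly the computation you perform after first isolating the factor $\Z/p^n\otimes_{\Z/p^m}H[p^m]\cong H[p^m]/p^nH[p^m]\cong H[p^n]$ via base change. Your extra care about the sheaf-theoretic setting and \'etale-local trivialization is a sound elaboration of points the paper leaves implicit, not a different argument.
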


\begin{proof}
Consider the following sequence: \[H[p^m] \xa{p^n} H[p^m] \ra H[p^n] \ra 0.\]
Tensoring with $G[p^n]$ yields 
\[H[p^m]\otimes_{\Z/p^m} G[p^n] \xa{p^n \otimes G[p^n]=0} H[p^m]\otimes_{\Z/p^m} G[p^n] \ra H[p^n] \otimes_{\Z/p^m} G[p^n] \ra 0\] is exact. Therefore $H[p^m]\otimes_{\Z/p^m} G[p^n] \cong H[p^n]\otimes_{\Z/p^m} G[p^n] \cong H[p^n]\otimes_{\Z/p^n} G[p^n].$
\end{proof}

\begin{lemma} \label{BT group}
$ G[p^n] \otimes_{\Z} H[p^n]$ is a BT$_n$ group scheme over $X$.
\end{lemma}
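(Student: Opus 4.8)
The plan is to reduce everything to an \'etale-local computation, exploiting the fact that $H$ is \'etale and that the condition of being a BT$_n$ group scheme is \'etale-local on the base. The guiding observation is that, after pulling back along a cover which trivializes $H[p^n]$, the tensor product $\bGn$ becomes a finite direct sum of copies of the BT$_n$ group $G[p^n]$, and such a direct sum is visibly BT$_n$. Descent along the trivializing cover then transports the conclusion back to $X$.

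First I would make the local trivialization precise. Since $H$ is an \'etale BT group of height $h$, the finite \'etale group scheme $H[p^n]$ is, \'etale-locally on $X$, isomorphic to the constant group scheme $(\Z/p^n\Z)^h$; concretely, there is a finite \'etale surjection $f\colon T\to X$ with $f^*H[p^n]\cong (\Z/p^n\Z)^h_T$. Because forming the tensor product of finite flat group schemes over $\Z$ (as an fppf abelian sheaf) commutes with base change, one obtains
\[ f^*\bigl(\bGn\bigr)\;\cong\; f^*G[p^n]\otimes_{\Z}(\Z/p^n\Z)^h\;\cong\;\bigl(f^*G[p^n]\bigr)^{\oplus h}. \]
Along the way I would also check that $\bGn$ is representable at all: \'etale-locally it is the scheme on the right-hand side, and the canonical descent datum coming from its definition as an fppf abelian sheaf lets one descend this representability back to $X$.

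Next I would verify that a finite direct sum of BT$_n$ groups is again a BT$_n$ group. The group $f^*G[p^n]$ is BT$_n$, being the pullback of the BT$_n$ group $G[p^n]$, so $\bigl(f^*G[p^n]\bigr)^{\oplus h}$ is finite locally free and killed by $p^n$; and since direct sums are exact, the defining exact sequences $0\to \mathcal G[p^l]\to \mathcal G\xrightarrow{p^l}\mathcal G[p^{n-l}]\to 0$ for $\mathcal G=(f^*G[p^n])^{\oplus h}$ are obtained termwise from those of $f^*G[p^n]$. Hence $f^*(\bGn)$ is BT$_n$ over $T$.

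Finally, descent finishes the argument: being a finite locally free group scheme, being killed by $p^n$, and the exactness of the sequences $0\to\mathcal G[p^l]\to\mathcal G\to\mathcal G[p^{n-l}]\to 0$ are all fppf-local on the base, and $f$ is faithfully flat. Since these all hold over $T$, they hold over $X$, so $\bGn$ is BT$_n$. The main obstacle I expect is not the descent of the BT$_n$ conditions, which is formal, but rather pinning down the tensor product $\bGn$ as an honest group scheme and checking that the trivialization $f^*(\bGn)\cong (f^*G[p^n])^{\oplus h}$ is canonical enough to be compatible with the descent data; once the objects are identified as fppf abelian sheaves this becomes routine, but it is the step that requires genuine care.
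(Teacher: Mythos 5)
Your proposal is correct and follows essentially the same route as the paper: trivialize $H[p^n]$ along a finite \'etale cover, identify the pullback of $\bGn$ with a finite direct sum of copies of the pullback of $G[p^n]$, observe that this is BT$_n$, and descend representability (via affineness/finiteness) and the BT$_n$ structure back to $X$. Your write-up is in fact slightly more careful than the paper's, which leaves the verification that a direct sum of BT$_n$ groups is BT$_n$ and the locality of the BT$_n$ conditions implicit.
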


\begin{proof}
 Since $H[p^n]$ is etale, there exists an etale morphism $f_n: X' \ra X$ such that 
\[f^*_n( G[p^n]\otimes_\Z H[p^n]) \cong f^*_n(G[p^n])^{\times h}\] 
is a BT$_n$ group. Furthermore, it defines a descent datum with respect to the etale covering $X' \ra X$. Since $G[p^n]$ is a finite group scheme, $f^*_n( G[p^n])^{\times h}\ra X'$ is an affine morphism. By (\cite[ Chapter Descent, Lemma 33.1, Lemma 35.1]{Stacks Project}), the group scheme representing $f^*_n( G[p^n])^{\times h}$ descents to $C$ and represents $G[p^n]\otimes_\Z H[p^n]$.  Hence $G[p^n] \otimes_{\Z} H[p^n]$ is a group scheme. Obviously it is $p^n-$torsion. 

Furthermore, all the extra structure of BT$_n$ group can also descent from $f^*_n( G[p^n])^{\times h}$ to $G[p^n] \otimes_{\Z} H[p^n]$. Hence $G[p^n] \otimes_{\Z} H[p^n]$ is a BT group. 
\end{proof}

\begin{prop} \label{strong BT group}
$G \odot H$ is a BT group over $X$.
\end{prop}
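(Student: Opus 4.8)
The goal is to upgrade Lemma~\ref{BT group}, which establishes that each truncation $G[p^n]\otimes_\Z H[p^n]$ is a $\mathrm{BT}_n$ group scheme, to the statement that the colimit $G\odot H=\colim_n\bigl(G[p^n]\otimes_\Z H[p^n]\bigr)$ is a genuine $\mathrm{BT}$ group over $X$. By the defining criterion for $\mathrm{BT}$ groups (Messing, Chapter~1), it suffices to exhibit on the inductive system the structure of a $p$-divisible group: namely a directed family of finite locally free group schemes $(G\odot H)[p^n]$, each killed by $p^n$ and locally free of constant rank, such that the transition maps realize $(G\odot H)[p^n]$ as the $p^n$-torsion of $(G\odot H)[p^{n+1}]$, with the multiplication-by-$p$ maps fitting together into the required exact sequences.

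The plan is to first identify $(G\odot H)[p^n]$ with $G[p^n]\otimes_\Z H[p^n]$ and verify that the inductive structure of the system is exactly the one needed. The key computational input is Lemma~\ref{tensor}: for $n<m$ one has $G[p^n]\otimes_{\Z/p^m}H[p^m]\cong G[p^n]\otimes_{\Z/p^n}H[p^n]$, so that tensoring the inclusions $G[p^n]\hookrightarrow G[p^{n+1}]$ and $H[p^n]\hookrightarrow H[p^{n+1}]$ produces the correct transition maps $G[p^n]\otimes_\Z H[p^n]\to G[p^{n+1}]\otimes_\Z H[p^{n+1}]$. First I would reduce everything, as in the proof of Lemma~\ref{BT group}, to the \'etale-local situation: choose for each $n$ an \'etale cover $f_n\colon X'\to X$ trivializing $H[p^n]$, so that $f_n^*\bigl(G[p^n]\otimes_\Z H[p^n]\bigr)\cong f_n^*(G[p^n])^{\times h}$ where $h=\Ht(H)$. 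Over $X'$ the inductive system becomes $\bigl(G[p^n]^{\times h}\bigr)_n$, which is manifestly the $p$-divisible group $G^{\times h}$ truncated; in particular the exact sequences $0\to G[p^n]\to G[p^{n+1}]\xrightarrow{p^n}G[p]$ and the multiplication-by-$p$ structure are visibly the $h$-fold products of those for $G$. Thus $\colim_n f_n^*\bigl(G[p^n]\otimes_\Z H[p^n]\bigr)\cong (f^*G)^{\times h}$ is a $\mathrm{BT}$ group over $X'$.

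The remaining step is descent. Exactly as in Lemma~\ref{BT group}, each finite locally free group scheme $G[p^n]\otimes_\Z H[p^n]$ together with all its $\mathrm{BT}_n$ structure has already been shown to descend from $X'$ to $X$ via the affine-morphism descent results cited there (\cite[Descent, Lemmas 33.1, 35.1]{Stacks Project}). What must be checked is that the \emph{transition maps} of the inductive system are likewise compatible with the descent data, so that the entire system, not merely its individual terms, descends. This is a compatibility of descent data across the maps $G[p^n]^{\times h}\to G[p^{n+1}]^{\times h}$, which holds because these maps are induced functorially from the inclusions $G[p^n]\hookrightarrow G[p^{n+1}]$ and $H[p^n]\hookrightarrow H[p^{n+1}]$ and the Galois action on the trivialization of $H$ commutes with these inclusions. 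Granting this, the descended system $\{(G\odot H)[p^n]\}_n$ inherits \'etale-locally the defining exact sequences of a $\mathrm{BT}$ group, and these descend as well; hence $G\odot H$ is a $\mathrm{BT}$ group with $(G\odot H)[p^n]=G[p^n]\otimes_\Z H[p^n]$.

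I expect the main obstacle to be precisely this last compatibility: verifying that the \'etale descent data attached to the separate truncations are coherent along the transition maps, so that one descends a \emph{diagram} rather than a collection of unrelated objects. One subtlety is that the trivializing covers $f_n$ a priori depend on $n$; I would handle this by passing to a common refinement (for instance the tower $T_n\to\cdots\to T_1\to X$ trivializing $H[p^n]$, as used in Section~\ref{application}) so that all truncations up to level $n$ are trivialized simultaneously and the transition maps live over a single cover. Once the system is realized \'etale-locally as the truncation of the honest $p$-divisible group $(f^*G)^{\times h}$ with Galois-equivariant transition maps, the remaining verifications are routine applications of the descent lemmas already invoked.
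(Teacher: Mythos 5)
Your proof is correct, but it takes a different route from the paper's. You globalize by \'etale descent of the entire inductive system: trivialize $H[p^{n}]$ on a cover, identify the system there with the truncations of $(f^*G)^{\times h}$, and then descend the diagram (terms \emph{and} transition maps) back to $X$, checking that the descent data are coherent along the transition maps. The paper instead argues directly on $X$ with no further descent: it tensors the defining exact sequence $0\to G[p^n]\to G[p^{n+1}]\to G[p]\to 0$ with $H[p^{n+1}]$, using that $H[p^{n+1}]$ is a flat $\Z/p^{n+1}$-module, and then invokes Lemma \ref{tensor} to identify the sub and quotient as $G[p^n]\otimes H[p^n]$ and $G[p]\otimes H[p]$, yielding the exact sequences that characterize a BT group; the only place descent appears is in Lemma \ref{BT group}, where the individual truncations are shown to be group schemes. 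The paper's argument is shorter and avoids exactly the bookkeeping you identify as the main obstacle (compatibility of descent data across the maps of the diagram, and the dependence of the trivializing cover on $n$); your argument buys a very transparent local picture, since over the cover the system is visibly $G^{\times h}$, and the compatibility you need does hold because the transition maps are functorial in the inclusions $G[p^n]\hookrightarrow G[p^{n+1}]$, $H[p^n]\hookrightarrow H[p^{n+1}]$ and because exactness of sequences of group schemes may be checked \'etale-locally. One small simplification to your plan: for each fixed exact sequence only the single cover trivializing $H[p^{n+1}]$ (or $H[p^{n+k}]$) is needed, so no common refinement of infinitely many covers is required.
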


\begin{proof}
Let 
\[i_n:G[p^n]\ra G[p^{n+1}] \]
 be the inclusions of truncated BT groups. Since each $H[p^{n+1}]$ is a flat $\Z/p^{n+1}$ module,  $i_n $ induces an inclusion \[G[p^n]\otimes H[p^{n+1}] \hookrightarrow G[p^{n+1}]\otimes H[p^{n+1}] \] with cokernel 
$G[p]\otimes H[p^{n+1}]. $ 
From \ref{tensor}, 
\[G[p^n] \otimes H[p^n] \cong G[p^n]\otimes H[p^{n+1}],G[p]\otimes H[p^{n+1}]\cong G[p]\otimes H[p] .\] Hence we have the short exact sequence of truncated BT groups
\begin{equation*} 
0 \ra G[p^n] \otimes H[p^n] \hookrightarrow G[p^{n+1}]\otimes H[p^{n+1}] \ra G[p]\otimes H[p] \ra 0.\end{equation*} 
Similarly, one can show this sequence holds even replacing 1 by any positive integer $k$.

Hence $\{G[p^n]\otimes H[p^n], i_n \}$ forms a inductive system and taking the inductive limit gives a BT group. 
\end{proof}

\section{Uniqueness of the lifting of the polarization}

Let $(A, \l)$ be a polarized abelian variety over $k$ and $A_n$ be an abelian variety over $W_n$. 
\begin{prop} \label{uniqueness}
If there exists a lifting polarization $\l_n$ of $\l$ on $A_n$, then the lifting is unique.
\end{prop}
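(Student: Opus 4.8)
The plan is to reduce the uniqueness of the lifted polarisation to the rigidity of homomorphisms of abelian schemes over the connected Artinian base $W_n$. A polarisation is in particular a homomorphism $\l_n\colon A_n\ra A_n^t$ to the dual abelian scheme, and symmetry and positivity are properties of this homomorphism rather than additional data; hence if $\l_n$ and $\l_n'$ are two polarisations on $A_n$ both reducing to the fixed $\l$ on the special fibre $A=A_n\otimes_{W_n}k$, their difference $\m:=\l_n-\l_n'\colon A_n\ra A_n^t$ is a homomorphism of abelian schemes whose reduction $\m_0\colon A\ra A^t$ is zero. It therefore suffices to prove that a homomorphism of abelian schemes over $W_n$ with vanishing special fibre is itself zero.

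First I would invoke the rigidity lemma for abelian schemes. The base $S=\Spec W_n$ is connected (it is local), the structure map $\pi\colon A_n\ra S$ is proper and flat with geometrically integral fibres, so $\pi_*\cO_{A_n}=\cO_S$, and $A_n$ carries its identity section $e$; the target $A_n^t$ is separated over $S$. The rigidity lemma then asserts that an $S$-morphism $A_n\ra A_n^t$ contracting one fibre to a point must factor as $A_n\xrightarrow{\pi}S\ra A_n^t$. Applying this to $\m$, whose special fibre $\m_0$ is the constant map onto the identity section of $A^t$, we obtain $\m=(\m\circ e)\circ\pi$. Since $\m$ is a homomorphism, $\m\circ e$ is the identity section of $A_n^t$, so $\m=0$ and $\l_n=\l_n'$, as desired.

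The only real content is thus the verification of the hypotheses of the rigidity lemma, above all the connectedness of $\Spec W_n$ together with the geometric connectedness and reducedness of the fibres of $A_n$, which force a homomorphism to be determined by a single fibre. I would emphasise that one should \emph{not} attempt to argue through the associated $p$-divisible group, as is done elsewhere in the paper: over $W_n$ the reduction map $\Hom_{W_n}(A_n[p^\infty],A_n^t[p^\infty])\ra\Hom_k(A[p^\infty],A^t[p^\infty])$ is \emph{not} injective, its kernel containing the Serre--Tate coordinates of the deformation (for instance the nonzero homomorphisms $\Q_p/\Z_p\ra\mu_{p^\infty}$ that exist over $W_n$ but die modulo $p$). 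Hence the rigidity used here genuinely depends on the global structure of the abelian scheme rather than on that of its $p$-divisible group, and this is the point at which care is required.
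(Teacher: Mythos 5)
Your main argument is correct but takes a genuinely different route from the paper. You reduce uniqueness to rigidity of homomorphisms of abelian schemes over the Artinian local base $\Spec W_n$: the difference $\m=\l_n-\l_n'$ is a homomorphism $A_n\ra A_n^t$ vanishing on the special fibre, hence zero by the rigidity lemma (Mumford, GIT, Prop.~6.1 and Cor.~6.2). The paper instead argues crystallinely: by Grothendieck--Messing, a lift of $(A,\l)$ to $W_n$ amounts to an isotropic lift of the Hodge filtration inside $\DD(A[p^\infty])_{W_n}$; since $A_n$ is given, the filtration is already determined, and the pairing on the crystal is determined by the special fibre, so the lifted polarization is forced. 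Your proof buys independence from the Dieudonn\'e machinery (and is not tied to $p$-adic bases at all); the paper's fits the crystalline framework used throughout Section 7 and exhibits the lifted polarization explicitly. Either is acceptable.

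However, the closing paragraph of your proposal contains a false assertion that you should delete or correct. The reduction map $\Hom_{W_n}(A_n[p^\infty],A_n^t[p^\infty])\ra\Hom_k(A[p^\infty],A^t[p^\infty])$ \emph{is} injective: this is Drinfeld's rigidity lemma (see Katz, ``Serre--Tate local moduli'', Lemma 1.1.3). Indeed, if $f$ reduces to $0$ modulo the nilpotent ideal $pW_n$, then $p^Nf=0$ for some $N$, and since multiplication by $p^N$ is an fppf epimorphism on a $p$-divisible group, $f=0$. Your proposed counterexample fails for the same reason: $\Hom_{W_n}(\Q_p/\Z_p,\mu_{p^\infty})=\varprojlim_m\mu_{p^m}(W_n)=0$, because every $p$-power root of unity in $W_n$ lies in $1+pW_n$ and is killed by the $p^n$-th power map, so no nonzero compatible system exists. (Nontrivial homomorphisms $\Z/p^m\ra\mu_{p^m}$ reducing to zero do exist at each \emph{finite} level, and the Serre--Tate coordinates live in an Ext group, not in this Hom group; this is presumably the source of the confusion.) Thus an argument through the $p$-divisible group would also succeed, and rigidity here does not ``genuinely depend on the global structure of the abelian scheme.'' This does not affect the validity of your proof of the proposition itself.
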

\begin{proof}
Let $\o_A\subset \DD(A)_k$ be the Hodge filtration associated to $A$. Note the lifting problem of $(A,\l)$ to $W_n$ is equivalent to the lifting problem of $(\DD(A[p^\infty]), <\,>, \o_A \subset \DD(A)_k )$, i.e. finding a totally isotropic submodule of $\DD(A[p^\infty])_{W_n}$ lifting $\o_A$.

By the assumption, regardless of the totally isotropic condition, the lifting of the Hodge filtration is fixed. So the lifting of the polarization, if exists, is merely given by restriction of the symplectic form $<\,>$ to $\DD(A[p^\infty])_{|{W_n}/\Z_p}$.  Obviously, such restriction is unique. 
\end{proof} 
\end{appendices}

\bibliographystyle{amsplain}
\bibliography{mybib}{}

\end{document}